\newtheorem{thm}{Theorem}[section]
\newtheorem{defn}[thm]{Definition}
\newtheorem{prop}[thm]{Proposition}
\newtheorem{cor}[thm]{Corollary}
\newtheorem{lemma}[thm]{Lemma}
\theoremstyle{definition}  
\newtheorem{remark}[thm]{Remark}
\title{ An Excision Theorem in Heegaard Floer Theory}
\author{Neda Bagherifard}
\newcommand{\Addresses}{{
		\bigskip
		\footnotesize
		\noindent Neda Bagherifard\\
		University of Oregon, Eugene, OR, 97403\\
		email: \texttt{nbagheri@uoregon.edu}
}}
\date{}
\begin{document}
\maketitle
\begin{abstract}
	Let $Y_1$ be a closed, oriented 3-manifold and $\Sigma$ denote a non-separating closed, orientable surface in $Y_1$ which consists of two connected components of the same genus. By cutting $Y_1$ along $\Sigma$ and re-gluing it using an orientation-preserving diffeomorphism of $\Sigma$ we obtain another closed, oriented 3-manifold $Y_2$.	When the excision surface $\Sigma$ is of genus one, we show that  twisted Heegaard Floer homology groups  of $Y_1$ and $Y_2$ (twisted with coefficients in the universal Novikov ring) are isomorphic. We use this excision theorem to demonstrate that certain manifolds are not related by the excision construction on a genus one surface. Additionally, we apply the excision formula to compute twisted Heegaard Floer homology groups of 0-surgery on certain two-component links, including some families of 2-bridge links.
\end{abstract}
\section{Introduction}
Excision formulas study the behavior of Floer  homologies under certain cutting and gluing of a 3-manifold along a surface. To explain the excision construction more precisely,
let $Y_1$ be a closed, oriented 3-manifold with at most two connect components and $\Sigma_i$, $i=1,2$, be two disjoint closed, connected, oriented, non-separating surfaces in $Y_1$ of the same genus. If $Y_1$ has two components $Y_{11}$ and $Y_{12}$, we require that $\Sigma_i$ is a surface in $Y_{1i}$, $i=1,2$. 
Let $h:\Sigma_1\rightarrow\Sigma_2$ be an orientation-preserving
diffeomorphism. Cut $Y_1$ along $\Sigma_1\cup\Sigma_2$ and denote the resulting manifold by $Y'$. $Y'$ is a manifold with four boundary components:
\[
\partial Y'=\Sigma_1\cup-\Sigma_1\cup\Sigma_2\cup-\Sigma_2.
\]
Glue $\Sigma_1$ to $-\Sigma_2$ and  $-\Sigma_1$ to $\Sigma_2$, through the diffeomorphism $h$ for each case, to obtain a closed manifold $Y_2$ (see Figure \ref{excision}).  We say that $Y_2$ is obtained from $Y_1$ by excision along the surfaces $\Sigma_1$ and $\Sigma_2$.

\begin{figure}[h!]
	\def\svgwidth{10cm}
	\begin{center}
\begingroup%
  \makeatletter%
  \providecommand\color[2][]{%
    \errmessage{(Inkscape) Color is used for the text in Inkscape, but the package 'color.sty' is not loaded}%
    \renewcommand\color[2][]{}%
  }%
  \providecommand\transparent[1]{%
    \errmessage{(Inkscape) Transparency is used (non-zero) for the text in Inkscape, but the package 'transparent.sty' is not loaded}%
    \renewcommand\transparent[1]{}%
  }%
  \providecommand\rotatebox[2]{#2}%
  \newcommand*\fsize{\dimexpr\f@size pt\relax}%
  \newcommand*\lineheight[1]{\fontsize{\fsize}{#1\fsize}\selectfont}%
  \ifx\svgwidth\undefined%
    \setlength{\unitlength}{222.46544545bp}%
    \ifx\svgscale\undefined%
      \relax%
    \else%
      \setlength{\unitlength}{\unitlength * \real{\svgscale}}%
    \fi%
  \else%
    \setlength{\unitlength}{\svgwidth}%
  \fi%
  \global\let\svgwidth\undefined%
  \global\let\svgscale\undefined%
  \makeatother%
  \begin{picture}(1,0.53361689)%
    \lineheight{1}%
    \setlength\tabcolsep{0pt}%
    \put(0,0){\includegraphics[width=\unitlength,page=1]{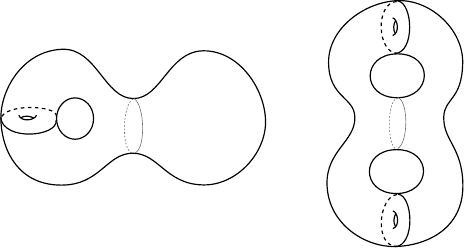}}%
    \put(0.24831606,0.11333593){\color[rgb]{0,0,0}\makebox(0,0)[lt]{\lineheight{1.25}\smash{\begin{tabular}[t]{l}$Y_1$\end{tabular}}}}%
    \put(0.67601594,0.02144124){\color[rgb]{0,0,0}\makebox(0,0)[lt]{\lineheight{1.25}\smash{\begin{tabular}[t]{l}$Y_2$\end{tabular}}}}%
    \put(0.0390375,0.20758863){\color[rgb]{0,0,0}\makebox(0,0)[lt]{\lineheight{1.25}\smash{\begin{tabular}[t]{l}$\Sigma_1$\end{tabular}}}}%
    \put(0.89290281,0.47242797){\color[rgb]{0,0,0}\makebox(0,0)[lt]{\lineheight{1.25}\smash{\begin{tabular}[t]{l}$\Sigma_1$\end{tabular}}}}%
    \put(0.89199581,0.05552913){\color[rgb]{0,0,0}\makebox(0,0)[lt]{\lineheight{1.25}\smash{\begin{tabular}[t]{l}$\Sigma_2$\end{tabular}}}}%
    \put(0.49696129,0.2021021){\color[rgb]{0,0,0}\makebox(0,0)[lt]{\lineheight{1.25}\smash{\begin{tabular}[t]{l}$\Sigma_2$\end{tabular}}}}%
    \put(0,0){\includegraphics[width=\unitlength,page=2]{FloerExcision-2.pdf}}%
  \end{picture}%
\endgroup%

		\caption{The construction of excision when $Y_1$ is connected.}
		\label{excision}
	\end{center}
\end{figure}

An excision formula which studies the behavior of instanton  homology under  cutting and gluing of a three manifold along a surface of genus one was introduced by Floer (see \cite{Braam1995}). Kronheimer and Mrowka proved a similar formula for the monopole and instanton Floer homology where the cutting and gluing is along higher genus surfaces  (see  \cite[Theorem 3.1 and Theorem 7.7]{Kron-Mrow}). When this surface is of genus one, there is a version of the excision formula for monopole Floer homology with local coefficients  (see \cite[Theorem 3.2 ]{Kron-Mrow}). 

Floer's excision theorem is a crucial tool in constructing monopole/instanton Floer homology for a sutured manifold and using it to define an invariant for a knot in a closed manifold.
In fact, to a sutured manifold $(M,\gamma)$, one can associate a closed three manifold $(Y,\overline{R})$ where $\overline{R}$ is a closed surface in $Y$. Kronheimer and Mrowka   (see \cite[Section 4]{Kron-Mrow}), defined monopole/instanton Floer homology groups for $(M,\gamma)$ as the monopole/instanton Floer homology groups of $(Y,\overline{R})$,  which are well-defined using Floer's excision theorem. They used this construction to define a monopole knot homology group $\textit{KHM}(Z,K)$ for a knot $K$ in a closed three manifold $Z$ (see \cite{Kronheimer2010KhovanovHI}), which provided a tool to prove that Khovanov homology detects the unknot (see \cite{Kronheimer2010KhovanovHI}). 

Another tool for studying knots and links in a 3-manifold is singular instanton Floer homology, which was defined for
a 3-manifold $Y$ with a link $L$ (see \cite{Kronheimer2010KhovanovHI,Kron-Knot}). 
The excision formula is valid in singular instanton Floer homology as long as the link $L$ is either disjoint from the excision surface or intersect it in an odd number of points (see \cite{Street,xie2019instanton,xie2023ring}).
This singular version of the excision theorem is used in \cite{xie2019instanton} to define an instanton Floer homology for sutured manifolds with tangles which is used later to show that annular Khovanov homology detects the unlink among other applications.

Ye's excision theorem (see \cite[Theorem A.1.30]{phdthesis}) relates the Heegaard Floer homology groups of $Y_1$ and $Y_2$ in some specific $\text{Spin}^c$ structures.
More precisely, let $Y$ be a closed, oriented 3-manifold and $F$ be a closed, oriented surface in $Y$ such that each component of $Y$ has a non-empty intersection with $F$. Let $F_i$, $i=1,\dots,m$, denote the connected components of $F$. Then 
\[
\text{Spin}^c(Y|F):=\{\mathfrak{s}\in\text{Spin}^c(Y)\ |\ \langle c_1(\mathfrak{s}),[F_i]\rangle=2g(F_i)-2,\  1\leq i\leq m\},
\]
\[
HF^+(Y|F):=\bigoplus_{\mathfrak{s}\in\text{Spin}^c(Y|F)}HF^+(Y,\mathfrak{s}),
\]
where $g(F_i)$ denotes the genus of $F_i$.
With this introduction, Ye's excision theorem is as follows.

\begin{thm}\label{FLoer-Ex}
	\cite[Theorem A.1.30]{phdthesis}
	Let $Y_2$ be constructed from $Y_1$ as explained above. Denote the excision surface in $Y_1$ and its corresponding copy in $Y_2$ by $F=\Sigma_1\cup\Sigma_2$ where $g(\Sigma_i)\geq2$, $i=1,2$. Then
	\[
	HF(Y_1|F)\cong HF(Y_2|F).
	\]
    Moreover, this isomorphism and its inverse are induced by restricted graph cobordisms. Here $HF(Y,\mathfrak{s})$ denotes $HF_{red}(Y,\mathfrak{s})=HF^+(Y,\mathfrak{s})\cong\mathbf{HF}^-(Y,\mathfrak{s})$ where $\mathbf{HF}^-$ is the completion of $HF^-$.
\end{thm}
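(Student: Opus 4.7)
The plan is to adapt the Kronheimer--Mrowka excision strategy of \cite{Kron-Mrow} to the Heegaard Floer setting, leaning on the known higher-genus computation of $HF^+$ of $\Sigma \times T^2$ in the top Spin$^c$ structure. First I would construct an explicit cobordism $W_{12}\colon Y_1 \to Y_2$ by taking $Y_1 \times [0,1]$, digging out a neighborhood of $(\Sigma_1 \cup \Sigma_2) \times \{1/2\}$, and gluing in a piece of the form $\Sigma \times P$, where $P$ is a pair of pants; the diffeomorphism $h$ specifies how two of the three boundary tori of $\Sigma \times P$ attach, and arranged correctly this turns the top boundary from $Y_1$ into $Y_2$. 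A symmetric construction furnishes the reverse cobordism $W_{21}\colon Y_2 \to Y_1$. Each carries a canonical graph structure given by a properly embedded arc in $P$ connecting two boundary circles.

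The second step is Spin$^c$-bookkeeping. I would restrict to those Spin$^c$ structures on $W_{12}$ and $W_{21}$ whose $c_1$ pairs with each embedded fiber $\Sigma \times \{p\}$ to give $2g-2$. These are precisely the Spin$^c$ structures whose ends lie in $\text{Spin}^c(Y_1|F)$ and $\text{Spin}^c(Y_2|F)$, so the sum of graph-cobordism maps over such structures produces well-defined homomorphisms $\Phi\colon HF(Y_1|F) \to HF(Y_2|F)$ and $\Psi\colon HF(Y_2|F) \to HF(Y_1|F)$.

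The third and crucial step is to show $\Psi \circ \Phi = \text{id}$ and $\Phi \circ \Psi = \text{id}$. Stacking $W_{12}$ and $W_{21}$ produces a 4-manifold which, after an isotopy, decomposes as $Y_1 \times [0,1]$ joined by a $\Sigma$-tube to $\Sigma \times T^2$: the torus arises because two copies of the pair of pants glue along two common boundary circles to produce a genus-one surface with two boundary components, which then closes up against the $Y_1 \times I$ piece. By the composition law for graph-cobordism maps and a neck-stretching argument, $\Psi \circ \Phi$ equals the identity on $HF(Y_1|F)$ paired with the relative invariant of the $\Sigma \times T^2$ piece inside the top Spin$^c$ summand of $HF^+(\Sigma \times T^2 | \Sigma)$. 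For $g(\Sigma) \geq 2$ this top summand has rank one by the Ozsv\'ath--Szab\'o / Jabuka--Mark computations, and the relative invariant is a generator, so the composite is the identity. A symmetric argument handles $\Phi \circ \Psi$.

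The main obstacle will be the third step: rigorously justifying the TQFT-style splitting of the composite cobordism map along the $\Sigma$-neck, and identifying the relative invariant as the generator of the top Spin$^c$ summand of $HF^+(\Sigma \times T^2|\Sigma)$. The first requires a neck-stretching analysis ensuring that the relevant Floer chain maps split as tensor products under the $\Sigma$-decomposition; the second is a direct computation in the top-fiber Spin$^c$ summand, available only because the hypothesis $g(\Sigma) \geq 2$ forces the summand to be one-dimensional (this is the step that fails in the genus-one case and that the present paper addresses by introducing Novikov-twisted coefficients). Preserving the graph structure throughout and verifying compatibility with the ``restricted graph cobordism'' clause of the statement adds a bookkeeping layer but no genuinely new analytical difficulty.
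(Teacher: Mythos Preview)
Your overall architecture matches Ye's proof (which this paper does not reprove but closely adapts in its proof of Theorem~\ref{MainThm}): build the pair-of-pants cobordism $W\colon Y_1\to Y_2$ and its reverse $W'=-W$, and show that the composites $W_A=W\cup_{Y_2}W'$ and $W_B=W'\cup_{Y_1}W$ induce the identity. You also correctly locate the role of the hypothesis $g(\Sigma)\ge 2$ in a rank-one computation.

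There is, however, a dimensional slip in your third step. You assert that the composite contains a $\Sigma\times T^2$ piece and invoke ``$HF^+(\Sigma\times T^2\mid\Sigma)$''; but $\Sigma\times T^2$ is four-dimensional, so this group is not defined. In Ye's argument (see the paper's proof of Theorem~\ref{MainThm} and Figure~\ref{Figure-0}), $W_A$ is the product $Y_1\times[-1,1]$ with a neighborhood of $F\times\{0\}$ excised and $\Sigma\times S^1\times[-\epsilon',\epsilon']$ glued in; the relevant $3$-manifold is $Y_A=\Sigma\times S^1$, and the rank-one input is $HF(\Sigma\times S^1\mid\Sigma)$.

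More substantively, the ``composite is identity'' step in Ye's proof is not carried out by a neck-stretching/relative-invariant argument of the kind you propose. Instead one decomposes $(W_A,\Gamma_A)$ so as to isolate $N(Y_A)$, viewed as the ``bend'' cobordism $\Sigma\times S^1\times I\colon\emptyset\to(\Sigma\times S^1)\amalg(-\Sigma\times S^1)$, and proves directly that its graph-cobordism map agrees with that of the ``cap'' cobordism $(\Sigma\times D^2)\amalg(-\Sigma\times D^2)$; this is Lemma~A.1.35 of \cite{phdthesis}, whose twisted analogue is Lemma~\ref{Main-Lem} here. Replacing bend by caps turns $W_A$ into the honest product, hence the identity. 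The bend-equals-cap lemma is proved by an explicit chain-level computation in Zemke's graph TQFT: the cotrace map, the free stabilizations $S^\pm_w$, and the basepoint-moving transition map of Proposition~\ref{Prop-1} (Theorem~14.1 of \cite{Z15}), combined with the non-vanishing of the cap map (Lemma~A.1.33 of \cite{phdthesis}; Lemma~\ref{Lem-2} here). No analytic gluing along a $\Sigma$-hypersurface and no ``relative invariant of a closed $4$-manifold piece'' enters; a direct TQFT splitting of the type you sketch is not available in Heegaard Floer theory, which is precisely why the argument is routed through Zemke's formalism.
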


Also, when the excision surface is of genus one, Ai and Peters proved a version of excision formula for a closed oriented 3–manifold which fibers over the circle (see \cite[Theorem 1.3]{A-P2010} and Theorem \ref{Thm-2} in Section 3). 
In this paper, we prove an excision theorem in $\omega$-twisted Heegaard Floer theory for arbitrary genus one surfaces. Our main theorem is:

\begin{thm}\label{MainThm}
	Let $Y_2$ be constructed from $Y_1$ as explained above and $F=\Sigma_1\cup\Sigma_2$. Suppose that $\iota_{F,i}:F\hookrightarrow Y_i$ and $\iota_{Y',i}:Y'\hookrightarrow Y_i$ denote the inclusions of $F$ and $Y'$ into $Y_i$.  Let $\omega_i\in H^2(Y_i;\mathbb{R})$ be such that $\iota_{F,1}^*(\omega_1)=\iota_{F,2}^*(\omega_2)\neq0$ and $\iota_{Y',1}^*(\omega_1)=\iota_{Y',2}^*(\omega_2)$. When $g(\Sigma_i)=1$, $i=1,2$,
	\[
	\underline{HF}(Y_1;\Lambda_{\omega_1})\cong  \underline{HF}(Y_2;\Lambda_{\omega_2}).
	\]
	 Moreover, this isomorphism and its inverse are induced by restricted graph cobordisms. Here $\underline{HF}(Y,\mathfrak{s};\Lambda_\omega)$ denotes \[
	\underline{HF}_{red}(Y,\mathfrak{s};\Lambda_\omega)= \underline{HF}^+(Y,\mathfrak{s};\Lambda_\omega)\cong\underline{HF}^-(Y,\mathfrak{s};\Lambda_\omega). 
	\]
\end{thm}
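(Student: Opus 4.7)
The plan is to adapt the Kronheimer--Mrowka genus-one excision strategy (as in \cite{Kron-Mrow} and its monopole-with-local-coefficients version) to the twisted Heegaard Floer setting, exploiting the fact that twisted Heegaard Floer homology of the relevant reference 3-manifold (a torus bundle) becomes rank one over the universal Novikov ring once the twisting class is nontrivial on the fiber torus. Specifically, I would construct a 4-dimensional cobordism $W_{12}$ between $Y_1\sqcup N$ and $Y_2\sqcup N$, where $N$ is a torus bundle over $S^1$ built from $\Sigma\times S^1$ via the gluing $h$, then use a Künneth-type formula for twisted Heegaard Floer homology together with $\underline{HF}(N;\Lambda_{\omega_N})\cong\Lambda$ to extract an honest map $\underline{HF}(Y_1;\Lambda_{\omega_1})\to\underline{HF}(Y_2;\Lambda_{\omega_2})$.

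First I would build the cobordism. Let $P$ be an oriented pair of pants and put $X=\Sigma\times P$, a 4-manifold with boundary three copies of $\Sigma\times S^1$; glue two copies of $X$ to $[0,1]\times Y'$ along the four surface boundary components of $Y'$, matching up via $h$ on each end. The result is a smooth 4-manifold $W_{12}$ whose boundary is $(-Y_1\sqcup -N)\sqcup(Y_2\sqcup N)$, and it carries a natural 1-skeleton making it a restricted graph cobordism in the sense required by the theorem. Next I would produce a cohomology class $\Omega\in H^2(W_{12};\mathbb{R})$ restricting to $\omega_1,\omega_2$ on the $Y_i$-ends and to a prescribed class $\omega_N$ on the two $N$-ends: a Mayer--Vietoris argument using the decomposition $W_{12}=([0,1]\times Y')\cup X \cup X$ along copies of $F\times[0,1]$ reduces this to gluing compatible classes, which is exactly what the two matching hypotheses $\iota_{Y',1}^*\omega_1=\iota_{Y',2}^*\omega_2$ and $\iota_{F,1}^*\omega_1=\iota_{F,2}^*\omega_2\neq0$ supply.

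Second, I would invoke a computation of $\underline{HF}(N;\Lambda_{\omega_N})$ analogous to Ai--Peters' genus-one calculation (Theorem~\ref{Thm-2}): when $\omega_N$ pairs nontrivially with the fiber torus, the twisted group is free of rank one over $\Lambda$, with a distinguished ``top'' generator. Combined with a twisted Künneth formula giving $\underline{HF}(Y_i\sqcup N;\Lambda)\cong\underline{HF}(Y_i;\Lambda_{\omega_i})\otimes_{\Lambda}\underline{HF}(N;\Lambda_{\omega_N})$, the graph cobordism map of $W_{12}$ yields a homomorphism $\Phi_{12}:\underline{HF}(Y_1;\Lambda_{\omega_1})\to\underline{HF}(Y_2;\Lambda_{\omega_2})$. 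The inverse $\Phi_{21}$ comes from the analogous cobordism $W_{21}$ built by interchanging the roles of $Y_1$ and $Y_2$.

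Third, to show $\Phi_{21}\circ\Phi_{12}=\mathrm{id}$ I would analyze the composite cobordism $W_{21}\cup W_{12}$. After stretching the necks along the $N$-boundary components and performing the standard pair-of-pants cancellations, this composite becomes diffeomorphic to $[0,1]\times Y_1$ (or differs from it by a cobordism whose effect on twisted Floer homology is the identity after pairing with the canonical generator of $\underline{HF}(N;\Lambda_{\omega_N})$). The main obstacle I anticipate is this last compatibility step: one must verify that, under the Künneth decomposition, the graph cobordism map is multiplicative in the appropriate sense and that the canonical generator of $\underline{HF}(N;\Lambda_{\omega_N})$ is preserved by the relevant handle attachments, so that the non-trivial twisting really does kill all contributions from $\underline{HF}(N)$ beyond rank one. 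Establishing this naturality for the reduced twisted theory $\underline{HF}_{\mathrm{red}}(Y,\mathfrak{s};\Lambda_\omega)=\underline{HF}^+=\underline{HF}^-$ is where the bulk of the technical work lies.
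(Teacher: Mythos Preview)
Your overall strategy---use the rank-one computation $\underline{HF}(T^3;\Lambda_\omega)\cong\Lambda$ and show that a round-trip cobordism induces the identity---is in the right spirit, but both the cobordism you build and the mechanism you propose for the key step differ from the paper and have real gaps.

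First, the construction. The paper does \emph{not} introduce an auxiliary $N$ on the boundary. It builds a single cobordism $W:Y_1\to Y_2$ by gluing $Y'\times I$ to $\Sigma\times P$ where $P$ is a \emph{saddle} (a square with corners), not a pair of pants; the four edge-pieces of $P$ match the four $\Sigma\times I$ components of $\partial Y'\times I$. Your gluing of two copies of $\Sigma\times(\text{pair of pants})$, each with three $\Sigma\times S^1$ boundary components, to $[0,1]\times Y'$, whose side boundary consists of four $\Sigma\times[0,1]$ pieces, does not match up as written; at minimum the construction needs to be reformulated before it even yields a cobordism between the stated ends.

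Second, and more seriously, the composite $W'\cup_{Y_2}W$ (the paper's $W_A$) is \emph{not} diffeomorphic to $Y_1\times[-1,1]$, and no amount of pair-of-pants cancellation will make it so. It differs from the product by a purely local modification: one removes $F\times[-\epsilon,\epsilon]^2$ from $Y_1\times[-1,1]$ and glues in $\Sigma\times S^1\times[-\epsilon',\epsilon']$ along the two $T^3$ boundaries. The entire content of the proof is that this local swap does not change the induced cobordism map. The paper accomplishes this via Lemma~\ref{Main-Lem}, an explicit chain-level comparison of the two cobordisms $\emptyset\to T^3\sqcup(-T^3)$ given by $T^3\times I$ (bent) versus $(T^2\times D^2)\sqcup(-T^2\times D^2)$. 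That lemma in turn rests on concrete computations: Theorem~\ref{Thm-1} and Corollary~\ref{C-1} (twisted free-stabilization maps), Proposition~\ref{Prop-1} (the transition map when a basepoint crosses an $\alpha$- and then a $\beta$-curve, in the twisted setting), and Lemma~\ref{Lem-2} (nonvanishing of the filling map $S^3\to T^3$). Your ``multiplicativity of cobordism maps and preservation of the canonical generator'' is exactly the statement that needs proving; without a replacement for Lemma~\ref{Main-Lem} at this level of specificity, the argument does not close. Even if your auxiliary-$N$ variant were made precise, the round-trip would still contain the same internal $T^3$ piece and would require the same local comparison.
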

In this paper, we work over $\mathbb{F}_2=\mathbb{Z}/2\mathbb{Z}$ unless otherwise stated.
In Theorem \ref{MainThm}, $\Lambda_\omega$ denotes the universal Novikov ring, which is equipped with a $\mathbb{F}_2[H^1(Y;\mathbb{Z})]$-module structure via a 2-dimensional cohomology class $\omega\in H^2(Y;\mathbb{R})$.  $\underline{HF}(Y,\mathfrak{s};\Lambda_{\omega})$ denotes a version of Heegaard Floer homology groups which is twisted  with coefficients in $\Lambda_\omega$ (see the next section for more details). Note that Theorem \ref{MainThm} implies that the choice of the diffeomorphism $h$ in the excision construction  does not matter (see Remark \ref{Independence-diff} for a precise statement).

We can use Theorem \ref{MainThm} to check if two given 3-manifolds are not related by the excision construction. Let $L$ be a link in $S^3$ and $S^3_0(L)$ denote the 3-manifold obtained by performing 0-surgery on the components of $L$. 
\begin{cor}\label{Topology-result}
	Let $W_n$ denote the $n$-twisted Whitehead link (see Figure \ref{Whitehead double} on the right for $n>0$). If $n,m\neq0$ and $|n|\neq |m|$, then $S^3_0(W_n)$ and $S^3_0(W_m)$ are not related by the excision construction on a genus one surface. 
\end{cor}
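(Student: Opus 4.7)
The plan is a proof by contradiction using Theorem~\ref{MainThm}. Suppose that for some $n,m\neq 0$ with $|n|\neq|m|$, the manifold $Y_2:=S^3_0(W_m)$ is obtained from $Y_1:=S^3_0(W_n)$ by excision along a genus-one surface $F=\Sigma_1\sqcup\Sigma_2$. I will produce cohomology classes $\omega_1,\omega_2$ satisfying the hypotheses of Theorem~\ref{MainThm}, deduce an isomorphism of twisted Heegaard Floer groups, and then use the paper's computation of $\underline{HF}$ for 0-surgery on two-bridge links to force $|n|=|m|$, contradicting the hypothesis.

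For the first step, both $S^3_0(W_k)$ have $H_1\cong\ZZ^2$, and hence $H^2(\cdot;\RR)\cong\RR^2$. Writing $Y'$ for the cut piece and $h$ for the excision diffeomorphism, I would pick $\omega_1\in H^2(Y_1;\RR)$ whose restriction to $F$ is non-zero and whose restrictions to $\Sigma_1$ and $\Sigma_2$ are identified under $h^*$; such a class exists because the compatibility is a linear condition and $H^2$ is two-dimensional. A Mayer--Vietoris analysis of the two ways of gluing the four boundary tori of $Y'$ then produces an $\omega_2\in H^2(Y_2;\RR)$ with $\iota_{F,2}^*(\omega_2)=\iota_{F,1}^*(\omega_1)\neq 0$ and $\iota_{Y',2}^*(\omega_2)=\iota_{Y',1}^*(\omega_1)$. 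Theorem~\ref{MainThm} then gives
\[
\underline{HF}(Y_1;\Lambda_{\omega_1})\iso\underline{HF}(Y_2;\Lambda_{\omega_2}).
\]

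Next I would invoke the explicit computation of $\underline{HF}(S^3_0(L);\Lambda_\omega)$ for two-component two-bridge links $L$ developed later in the paper, specialized to $L=W_n$ and $L=W_m$. Because the twisted Whitehead links form a 2-bridge family whose multivariable Alexander polynomials depend on $|k|$, the computation produces a numerical invariant of $\underline{HF}(S^3_0(W_k);\Lambda_\omega)$ (for $\omega$ in a suitable open set), such as the $\Lambda_\omega$-rank over an appropriate fraction field, that detects $|k|$. The isomorphism above forces this invariant to coincide for $n$ and $m$, contradicting $|n|\neq|m|$.

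The main obstacle I anticipate is arranging the argument uniformly over all possible excision surfaces $F\subset S^3_0(W_n)$: the $h^*$-compatibility and non-vanishing conditions on $\omega_1$ depend on the homology class of $F=\Sigma_1\sqcup\Sigma_2$ in $H_2(S^3_0(W_n);\ZZ)\cong\ZZ^2$, and one must verify that the set of $\omega_1$ for which the paper's computation genuinely detects $|n|$ meets these constraints for every admissible $F$. Since the ``bad'' locus of $\omega_1$ should be a proper Zariski-closed subset of the two-dimensional space $H^2(Y_1;\RR)$, a finite case analysis on the homology type of $F$, combined with the explicit formula for the two-bridge computation, should suffice to conclude the proof.
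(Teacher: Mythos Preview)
Your overall strategy matches the paper's: argue by contradiction, choose compatible $\omega_1,\omega_2$, apply Theorem~\ref{MainThm}, and then feed both sides into a computation that extracts $|n|$ from the $\Lambda$-rank. The paper's own proof is only a few lines and does exactly this.

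There are two points where your proposal drifts from the paper. First, the computational input is not the two-bridge result (Corollary~\ref{two-bridge}), whose family $C(m,\pm1,n)$ with $m,n\neq 0$ does not contain the Whitehead links $W_k$; rather, it is Corollary~\ref{Whieheade-Link}, which states directly that $\underline{HF}(S^3_0(W_k);\Lambda_\omega)\cong\Lambda^{|k|}$ whenever $\omega$ evaluates nontrivially on the capped Seifert torus $T_k$. That result is obtained by computing $\underline{HF}^+$ for $0$-surgery on the twist knot $D_-(U,k)$ (via knot Floer homology and the twisted surgery sequence) and then applying Theorem~\ref{MainThm} once more---Alexander polynomials do not enter. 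Second, the ``uniformity over all excision surfaces'' issue you flag is real, but the paper handles it in one sentence: among the two-parameter family of $\omega_1$ satisfying the excision compatibility, one can further arrange $\omega_i([T_i])\neq 0$, since this is an open condition. Your Mayer--Vietoris sketch for producing $\omega_2$ from $\omega_1$ is fine, but the finite case analysis you propose is more than what is needed---one generic choice suffices.
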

Another application of Theorem \ref{MainThm} is in computing twisted Heegaard Floer homology groups of manifolds obtained by performing 0-surgery on certain families of 2-bridge links.

\begin{cor}\label{two-bridge}
	Let $C(m,\pm1,n)$ denote a 2-bridge link shown in Figure \ref{Two-Bridge} such that $|m+n|\leq1$ and $m,n\neq0$. 
	Then
	\[
	\underline{HF}^+(S^3_0(C(m,\pm1,n));\Lambda_\omega)\cong\begin{cases}
		\Lambda^{|mn|}&\textnormal{if}\ m=-n,\\
		\Lambda^{|mn|}\oplus\Lambda[U^{-1}]&\textnormal{if}\ |m-n|=1,
	\end{cases}
	\]
	where $[\omega]\in H^2(S^3_0(C(m,\pm1,n));\mathbb{R})$ is $\lambda\textnormal{PD}[\eta]\neq0$, $\lambda\in\mathbb{R}$. Here $\eta$ is the meridian of the red component of $C(m,\pm1,n)$ (see Figure \ref{Two-Bridge} on the bottom right).
\end{cor}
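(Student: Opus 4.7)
My plan is to apply the excision theorem (Theorem \ref{MainThm}) to reduce the calculation of $\underline{HF}^+(S^3_0(C(m,\pm 1,n));\Lambda_\omega)$ to that of a simpler 0-surgery manifold whose twisted Heegaard Floer homology can be extracted from known computations together with the K\"unneth formula for connected sums.

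First I would identify two disjoint non-separating genus-one surfaces $\Sigma_1, \Sigma_2$ in $Y_1 := S^3_0(C(m, \pm 1, n))$. The 2-bridge presentation of $C(m,\pm 1, n)$ exhibits a Conway 2-sphere separating the $m$-twist tangle from the $n$-twist tangle and meeting $L = C(m, \pm 1, n)$ in four points, two on each link component. After 0-surgery the capped-off Conway sphere becomes a torus in $Y_1$; taking such a torus on each side of the central $\pm 1$-twist produces $\Sigma_1$ and $\Sigma_2$, which are disjoint, non-separating, and related by an obvious orientation-preserving diffeomorphism. I would then verify that the class $\omega = \lambda\,\mathrm{PD}[\eta]$ restricts non-trivially to each $\Sigma_i$, so that the hypothesis $\iota_{F,1}^*(\omega_1) = \iota_{F,2}^*(\omega_2) \neq 0$ of Theorem \ref{MainThm} is satisfied.

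Because Theorem \ref{MainThm} guarantees independence of the gluing diffeomorphism $h$, I would choose $h$ so that the excised manifold $Y_2$ splits as a connected sum of simpler pieces. I expect these pieces to be 0-surgeries on torus links of type $T(2,2m)$ and $T(2,2n)$ (or closely related lens-space-like manifolds, depending on the sign of the middle twist and the arithmetic of $m+n$), possibly together with an additional $S^1\times S^2$ factor when the middle twist introduces a mismatch. I would then compute $\underline{HF}^+(Y_2;\Lambda_{\omega_2})$ using the K\"unneth formula for twisted Heegaard Floer homology with universal Novikov coefficients, combined with the known twisted values $\underline{HF}^+(S^3_0(T(2,2k));\Lambda_\omega) \cong \Lambda^{|k|}$ when $\omega$ pairs non-trivially with the capped-off torus direction, and $\underline{HF}^+(S^1\times S^2;\Lambda_\omega) \cong \Lambda[U^{-1}]$. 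In the balanced case $m = -n$ no $\Lambda[U^{-1}]$ summand survives, yielding $\Lambda^{|mn|}$, while in the asymmetric case $|m-n|=1$ the extra factor contributes the stated $\Lambda[U^{-1}]$.

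The main obstacle will be the explicit identification of $Y_2$: tracking framings and orientations under the excision gluing, and exploiting the freedom in the choice of $h$ to recognize $Y_2$ as a concrete connected sum, is the technical core of the argument. A secondary concern is checking that the cohomology classes $\omega_1 = \omega$ on $Y_1$ and the corresponding $\omega_2$ on $Y_2$ can be chosen so as to satisfy the compatibility hypothesis $\iota_{Y',1}^*(\omega_1) = \iota_{Y',2}^*(\omega_2)$ across the complement $Y'$, ensuring that the Novikov-twisted invariants remain non-trivial throughout the reduction.
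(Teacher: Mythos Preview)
Your approach has a fundamental obstruction in the second case. When $|m+n|=1$ the manifold $S^3_0(C(m,\pm1,n))$ is an integer homology sphere: performing $0$-surgery on the red and blue components of the auxiliary link $B(m,n)$ (which have linking number $m+n=\pm1$) yields $S^3$, and the paper identifies $S^3_0(C(m,\pm1,n))$ with $\pm1$-surgery on a knot in that $S^3$. Since $b_1=0$, every embedded torus is separating and there are no surfaces $\Sigma_1,\Sigma_2$ on which to run Theorem~\ref{MainThm}. This is precisely the case in which the tower $\Lambda[U^{-1}]$ appears, and it cannot arise through your mechanism: with $b_1=0$ one has $[\omega]=0$, so $\underline{HF}^+(\,\cdot\,;\Lambda_\omega)\cong HF^+(\,\cdot\,)\otimes_{\mathbb{F}_2}\Lambda$, and the tower is simply the ordinary $HF^+$ tower of a homology sphere. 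Your claimed value $\underline{HF}^+(S^1\times S^2;\Lambda_\omega)\cong\Lambda[U^{-1}]$ is also incorrect: it is $0$ when $\omega\neq0$ and two towers when $\omega=0$.

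The paper takes a different route and never applies excision to $S^3_0(C(m,\pm1,n))$. It first computes $\underline{HF}^+(S^3_0(B(m,n));\Lambda_\omega)\cong\Lambda^{|mn|}$ by applying Theorem~\ref{MainThm} to the three-component surgery $S^3_0(B(m,n))$, where the two capped Conway spheres genuinely are non-separating and excision yields the \emph{disjoint union} $S^3_0(W_m)\amalg S^3_0(W_n)$ (Lemma~\ref{excision-on-borromean}, Corollary~\ref{Borromean}). It then feeds this into the $\omega$-twisted surgery triangle for the remaining component $U$: when $m=-n$ one has $Y=\#^2(S^1\times S^2)$ with $\omega|_Y\neq0$, so $\underline{HF}^+(Y;\Lambda_\omega)=0$; when $|m+n|=1$ one has $Y=S^3$ with $\omega|_{S^3}=0$, so $\underline{HF}^+(S^3;\Lambda)\cong\Lambda[U^{-1}]$, and the map into $\Lambda^{|mn|}$ vanishes for $\Lambda[U]$-module reasons. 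Note finally that excision produces a cut-and-reglue along tori, not a connected sum; even in the $m=-n$ case your K\"unneth step would have to be the disjoint-union tensor product over $\Lambda$, not the connected-sum formula over $\Lambda[U]$.
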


\emph{Outline of the paper.} In Section \ref{Section 1}, we review twisted Heegaard Floer theory where the Heegaard Floer homology groups are twisted with Novikov ring.  
In Section \ref{Section 2}, we modify the proof of Theorem \ref{FLoer-Ex} to the twisted case to provide a proof for Theorem \ref{MainThm}. In Section \ref{Section 3}, Theorem \ref{MainThm} is applied to compute Heegaard Floer homology groups of a 3-manifold obtained by operating 0-surgery on the $n$-twisted Whitehead link (see Figure \ref{Whitehead double}). As a result of this computation, we provide proofs for Corollary \ref{Topology-result} and Corollary \ref{two-bridge}.

\textbf{Acknowledgment. }The author would like to thank her advisor Robert Lipshitz for continuous guidance, helpful discussions and suggestions, and for carefully reading a draft of the paper. The research was supported by NSF grant DMS-2204214.

\section{Background on twisted Heegaard Floer theory}\label{Section 1}

Associated to a closed, connected, oriented 3-manifold $Y$, there are Heegaard Floer homology groups denoted $HF^\circ(Y)$, $\circ\in\{\pm,\wedge,\infty\}$, introduced by Ozsv\'{a}th-Szab\'{o} (see \cite{OS}), which are modules over the ring $\mathbb{Z}[U]$ or $\mathbb{F}_2[U]$. These modules split by $\text{Spin}^\text{c}$ structures as $\oplus_{\mathfrak{s}\in\text{Spin}^c(Y)}HF^\circ(Y,\mathfrak{s})$. The modules are constructed using a pointed Heegaard diagram \[\mathcal{H}=(\Sigma,\boldsymbol{\alpha}=\{\alpha_1,\dots,\alpha_g\},\boldsymbol{\beta}=\{\beta_1,\dots,\beta_g\},z),
\] $z\in\Sigma\setminus\boldsymbol{\alpha}\cup\boldsymbol{\beta}$, for $Y$ and a choice of a generic almost complex structure $J$ on $\text{Sym}^g(\Sigma)$ which satisfies some compatibility conditions with a symplectic form on $\text{Sym}^g(\Sigma)$ ($g$ denotes the genus of $\Sigma$). Roughly, the differential counts $J$-holomorphic disks $u:[0,1]\times\mathbb{R}\rightarrow \text{Sym}^g(\Sigma)$ with boundary on $\mathbb{T}_\alpha=\alpha_1\times\dots\times\alpha_g$ and $\mathbb{T}_\beta=\beta_1\times\dots\times\beta_g$ which are asymptotic to the generators which are elements in $\mathbb{T}_\alpha\cap\mathbb{T}_\beta$. 
There is an equivalent cylindrical reformulation, suggested by Lipshitz (see \cite{RL}), where $J$-holomorphic surfaces $u:S\rightarrow\Sigma\times[0,1]\times\mathbb{R}$ with cylindrical ends are counted (see \cite{RL} for more details).

Heegaard Floer homology fits into the framework of a $(3+1)$-dimensional TQFT (see \cite{OS4} and \cite{OZSVATH}). Let $W$ be a  smooth, oriented four manifold such that $\partial W=-Y_1\amalg Y_2$ where $Y_i$, $i=1,2$, are closed, oriented and connected 3-manifolds. In this case, we say $W$ is a cobordism from $Y_1$ to $Y_2$.  This cobordism induces a map 
\[
F^\circ_{W,\mathfrak{t}}:HF^\circ(Y_1,\mathfrak{s}_1)\rightarrow HF^\circ(Y_2,\mathfrak{s}_2),
\]
where $\mathfrak{t}\in\text{Spin}^c(W)$, and $\mathfrak{s}_i=\mathfrak{t}|_{Y_i}$ (see \cite[Theorem 1.1]{OZSVATH}).
In the following, we will work with cobordisms between disconnected 3-manifolds. For that purpose, we need the following definition.
\begin{defn}\cite[Definition 3.1]{Z15}
	A multi-pointed 3-manifold is a pair $(Y,\mathbf{w})$ consisting of a closed, oriented 3-manifold with a finite collection of basepoints $\mathbf{w}\subset Y$ such that each component of $Y$ contains at least one basepoint. 
\end{defn} 

\begin{defn}\cite[Definition 4.1]{Z15}
	Suppose $(Y,\mathbf{w})$ is a multi-pointed 3-manifold. A multi-pointed Heegaard diagram $\mathcal{H}=(\Sigma,\boldsymbol{\alpha},\boldsymbol{\beta},\mathbf{w})$
	for $(Y,\mathbf{w})$ is a tuple such that $\mathcal{H}=(\Sigma,\boldsymbol{\alpha},\boldsymbol{\beta})$ is a standard Heegaard diagram for $Y$ and $\mathbf{w}\subset\Sigma\setminus(\boldsymbol{\alpha\cup\boldsymbol{\beta}})$.
\end{defn}

Let $\mathbf{w}=\{w_1,\dots,w_n\}$, then
\[
\mathbb{F}_2[U_\mathbf{w}]:=\mathbb{F}_2[U_{w_1},\dots,U_{w_n}].
\]
Also, $\mathbb{F}_2[U_{\mathbf{w}},U^{-1}_{\mathbf{w}}]$ denotes the ring obtained by formally inverting each of the variables $U_{w_i}$. If $\mathbf{k}=(k_1,\dots, k_n)$ is an $n$-tuple, we write
\[U_\mathbf{w}^\mathbf{k}:=U_{w_1}^{k_1}\cdots U^{k_n}_{w_n}.
\]
We write $U_i$ for $U_{w_i}$. Also, $k[U_\mathbf{w}]$ and $k[U_\mathbf{w},U_\mathbf{w}^{-1}]$, where $k$ is a ring, are define similarly.
Similar to the connected case above, multi-pointed 3-manifolds can be described using multi-pointed Heegaard diagrams $\mathcal{H}=(\Sigma,\boldsymbol{\alpha},\boldsymbol{\beta},\mathbf{w})$  and there is a map
\[
\mathfrak{s}_\mathbf{w}:\mathbb{T}_{\boldsymbol{\alpha}}\cap\mathbb{T}_{\boldsymbol{\beta}}\rightarrow \text{Spin}^\text{c}(Y).
\]
We recall the definition of $CF^\circ(Y)$ when $Y$ is disconnected. Let $(Y,\mathbf{w})=(Y_1,\mathbf{w}_1)\amalg (Y_2,\mathbf{w}_2)$, where $(Y_i,\mathbf{w}_i)$, $i=1,2$ is a connected, multi-pointed 3-manifold. Let $\mathcal{H}_i$, $i=1,2$, be weakly admissible multi-pointed Heegaard diagrams for $(Y_i,\mathbf{w}_i)$. Define
\[
(CF^\circ(\mathcal{H}_1\amalg\mathcal{H}_2),\partial_{J_s}):=(CF^\circ(\mathcal{H}_1),\partial_{J_{s_1}})\otimes_{\mathbb{F}_2}(CF^\circ(\mathcal{H}_2),\partial_{J_{s_2}}).
\]
where  $J_{s}=J_{s_1}\amalg J_{s_2}$ is a generic path of almost complex structures in $Y$.

The graph TQFT of Zemke (see \cite{Z15}) generalizes to cobordisms with disconnected ends where he defines a notion of  \emph{ribbon graph cobordism} $(W,\Gamma)$ between $(Y_1,\mathbf{w}_1)$ and $(Y_2,\mathbf{w}_2)$. Here $\Gamma$ is an embedded graph with $\Gamma\cap Y_i=\mathbf{w}_i$, and is decorated with a \emph{formal ribbon structure}. Zemke proves that  there are two chain maps 
\[
F^A_{W,\Gamma,\mathfrak{t}}, F^B_{W,\Gamma,\mathfrak{t}}:CF^\circ(Y_1,\mathbf{w}_1,\mathfrak{s}_1)\rightarrow CF^\circ(Y_1,\mathbf{w}_2,\mathfrak{s}_2),
\]
which are diffeomorphism invariants of $(W,\Gamma)$ up to $\mathbb{F}_2[U]$ equivariant chain homotopy. Here $\mathfrak{t}\in \text{Spin}^c(W)$ and $\mathfrak{s}_i=\mathfrak{t}|_{Y_i}$. (See Subsection \ref{GraphCobordism} for more details). We denote the map induced on the chain complex by $f^\circ_{W,\Gamma,\mathfrak{t}}$ and the map induced on the homology by $F^\circ_{W,\Gamma,\mathfrak{t}}$. 

In the following subsections, 
following \cite[Section 12]{Z21} and \cite[Subsection 2.1]{JZ}, $\doteq$ and $\dot{\simeq}$  indicate equality or chain homotopy of morphisms, respectively, up to some unit. 
We work with 2-dimensional cohomology classes $[\omega]\in H^2(X;\mathbb{R})$ where $X$ is either a closed 3-manifold or a 4-dimensional cobordism. We consider $k[H^1(Y;\mathbb{Z})]$-modules $M$ where $k$ can be either $\mathbb{F}_2$ or $\mathbb{Z}$.

\subsection{Twisted Heegaard Floer homology groups}
In this part, we review twisted Heegaard Floer theory where the coefficients are weighted by a 2-dimensional cohomology class $\omega$. This is a special case of Heegaard Floer homology with twisted coefficients (see \cite{OS2}, \cite{JM}, \cite{Z21}).

\textbf{Novikov ring.} 
Let $k$ be a commutative ring and $\Gamma\subset\mathbb{R}$ be an additive subgroup of $\mathbb{R}$. The Novikov ring of $\Gamma$, $\text{Nov}(\Gamma)$,  is a ring consisting of formal sums  $\sum a_{r_i}t^{r_i}$ where $r_1<r_2<\cdots$, $r_i\in\Gamma$, and $r_i\rightarrow\infty$. Here $t$ is a formal variable. Equivalently,
\[\text{Nov}(\Gamma)=\{\sum_{r\in\Gamma}a_rt^r|a_r\in k,\ \ \#\{r|a_r\neq0,\ r<c\}<\infty,\ \forall c\in\mathbb{R}\}.\]
The multiplication is defined as 
\[
(\sum_{r\in\Gamma}a_{r}t^{r})\cdot(\sum_{r\in\Gamma}b_{r}t^{r})=\sum_{z\in\Gamma}(\sum_{r+s=z}a_{r}\cdot b_{s})t^{z}.
\]
When $k$ is a field, the Novikov ring is also a field. $\text{Nov}(\mathbb{R})$ is called the universal Novikov ring and is denoted by $\Lambda$.  In this paper, $k=\mathbb{F}_2$ unless otherwise stated.

Let $Y$ be a closed, oriented 3-manifold and fix $[\omega]\in H^2(Y;\mathbb{R})$. This cohomology class  induces a $\mathbb{F}_2[H^1(Y;\mathbb{Z})]$-module structure on $\Lambda$: for each $\eta\in H^1(Y;\mathbb{Z})$, define $t^\eta\cdot t^r:=t^{r+\int_{Y}\eta\cup\omega}$ where $\int_{Y}\eta\cup\omega=\langle \eta\cup\omega,[Y]\rangle$ and $[Y]$ denotes the fundamental class of $Y$. Let $\Lambda_\omega$ denote $\Lambda$ equipped with this $\mathbb{F}_2[H^1(Y;\mathbb{Z})]$-module structure. Let $a,b\in M$ where $M$ is a $\Lambda$-module. By $a\doteq b$, we mean that there exists $z\in\mathbb{R}$ such that $a=t^z\cdot b$.

\textbf{$\omega$-twisted chain complexes.} Let $(Y,\mathbf{w})$ denote a multi-pointed 3-manifold. 
Let $\mathcal{H}=(\Sigma,\boldsymbol{\alpha},\boldsymbol{\beta},\mathbf{w})$ denote a multi-pointed, weakly admissible Heegaard diagram for $(Y,\mathbf{w})$, $\mathfrak{s}\in \text{Spin}^c(Y)$, and $J$ be a generic almost complex structure. Fix a 2–cocycle
representative $\omega\in[\omega]$. The $\omega$-twisted chain complex, $\underline{CF}^\infty(\mathcal{H},\mathbf{w},\mathfrak{s};\Lambda_\omega)$, is a  $\Lambda[U_\mathbf{w},U^{-1}_\mathbf{w}]$-module which is freely generated by the points $\mathbf{x}\in \mathbb{T}_\alpha\cap \mathbb{T}_\beta$ with $\mathfrak{s}_{\mathbf{w}}(\mathbf{x})=\mathfrak{s}$. Let 
\[
n_\mathbf{w}(\phi):=(n_{w_1}(\phi),\cdots,n_{w_n}(\phi)).
\]
The differential is defined as follows:
\[
\underline{\partial} \mathbf{x}=\sum_{\mathbf{y}\in T_\alpha\cap T_\beta}\sum_{\substack{\phi\in\pi_2(\mathbf{x},\mathbf{y}),\\ \mu(\phi)=1}}\#\widehat{\mathcal{M}}(\phi) U_\mathbf{w}^{n_\mathbf{w}(\phi)}t^{\omega([\phi])}\ \mathbf{y},
\]
where $[\phi]$ (sometimes denoted by $\widetilde{D}(\phi)$) is a 2-chain in $Y$ obtained from $D(\phi)$ (which is the domain associated to $\phi$ and is a 2-chain in $\Sigma$) by coning the $\alpha$- and $\beta$-boundaries of $D(\phi)$ using gradient trajectories. Here $\omega([\phi])$ (sometimes denoted by $\int_{\widetilde{D}(\phi)}\omega$) is the evaluation of $\omega$ on $[\phi]$. Note that $\partial\widetilde{D}(\phi)$ depends only on $\mathbf{x}$ and $\mathbf{y}$.  Notice also that the
isomorphism class of the chain complex only depends on the cohomology class $[\omega]$ and is independent from the choice of the 2-cycle representative $\omega\in[\omega]$ (see \cite[Section 2.1]{A-P2010}). The homology group is denoted by $\underline{HF}^\infty(\mathcal{H},\mathfrak{s};\Lambda_\omega)$ which is a $\Lambda[U]$-module. Similar construction works for $\underline{CF}^-$, $\underline{CF}^+$, $\underline{\widehat{CF}}$.   
\begin{defn}\cite[Definition 2.3]{JZ}
	Suppose that $\mathcal{C}$ is a category and $I$ is a set. A transitive system in $\mathcal{C}$, indexed by
	$I$, is a collection of objects $(X_i)_{i\in I}$, as well as a distinguished morphism $\Psi_{i\rightarrow j} : X_i \rightarrow X_j$ for each
	$(i, j) \in I \times I$, such that
	\begin{enumerate}
		\item $\Psi_{i\rightarrow j}=\Psi_{j\rightarrow k}\circ\Psi_{i\rightarrow j}$
		\item 
		$\Psi_{i\rightarrow i}=\text{id}_{X_i}$.
	\end{enumerate}
\end{defn}
Here, we work with two categories. First category is the projectivized category of $\Lambda$-modules $\mathcal{C}= P(\Lambda- \text{Mod})$ where the objects are $\Lambda$-modules and
the morphism set $\text{Hom}_\mathcal{C}(X_1, X_2)$ is the projectivization of $\text{Hom}_\Lambda(X_1, X_2)$ under the action
of elements of $\Lambda$ of the form $t^z\in\Lambda$. In this category, given morphisms $f, g \in\text{ Hom}_\Lambda(X_1, X_2)$, we will use the notation $f\doteq g$ if $f = t^z\cdot g$ for some $z\in\mathbb{R}$. 
The second category is the projectivized homotopy category $\mathcal{C} = P(K(\Lambda– \text{Mod}))$. The objects of $\mathcal{C}$ are chain complexes over $\Lambda$. The morphism set $\text{Hom}_\mathcal{C}(X_1, X_2)$ is the projectivization of $H_*(\text{Hom}_\Lambda(X_1, X_2))$
under the action of elements of $\Lambda$ of the form $t^z$. In this category, given chain maps $\phi,\psi\in H_*(\text{Hom}_\Lambda(X_1, X_2))$, $\phi\ \dot{\simeq}\ \psi$ means $\phi\simeq t^z \psi$ for some $z\in\mathbb{R}$ and $\simeq$ means chain homotopy. When $\phi\ \dot{\simeq}\ \psi$, $\phi$ and $\psi$ are called projectively  equivalent (see \cite[Subsection 2.1]{JZ}).  A transitive system over one of
the above categories is called a projective transitive system.

$\underline{HF}^\circ(Y,\mathfrak{s};\Lambda_\omega)$, $\circ\in\{\pm,\infty,\wedge\}$,   
forms a projective transitive system of $\Lambda$-modules indexed by the set of pairs $(\mathcal{H},J)$, where $\mathcal{H}$ is an $\mathfrak{s}$
admissible diagram of $Y$, and $J$ is a generic almost complex structure. (see  \cite[Theorem 3.1]{JZ} and  \cite[Remark 12.1]{Z21}). 
Note that when $\omega$ is the zero cohomology class, we have 
\[
\underline{CF}^-(Y,\mathfrak{s};\Lambda_\omega)\cong CF^-(Y,\mathfrak{s})\otimes_{\mathbb{F}_2}\Lambda.
\]
Also when a three manifold is a disjoint union of two connected three manifolds $Y$ and $Y'$, we define 
\[
\underline{CF}^-(Y\amalg Y',\mathfrak{s}\oplus\mathfrak{s}';\Lambda_{\omega\oplus\omega'}):= \underline{CF}^-(Y,\mathfrak{s};\Lambda_\omega)\otimes_{\Lambda} \underline{CF}^-(Y',\mathfrak{s}';\Lambda_{\omega'}).
\]
\begin{remark}
	In this remark the coefficients are taken over $\mathbb{Z}$. There is a universally twisted Heegaard Floer homology, denoted by $\underline{HF}^\circ(Y,\mathfrak{s})$ which is a $\mathbb{Z}[H^1(Y;\mathbb{Z})]$-module (see \cite[Section 8]{OS2}). When $M$ is a  $\mathbb{Z}[H^1(Y;\mathbb{Z})]$-module, $\underline{HF}^\circ(Y,\mathfrak{s};M)$ is defined as the homology group of the chain complex 
	\[
	\underline{CF}^\circ(Y,\mathfrak{s};\mathbb{Z}[H^1(Y;\mathbb{Z})])\otimes_{\mathbb{Z}[H^1(Y;\mathbb{Z})]}M.
	\]
	If $M=\mathbb{Z}$ (where the elements of  $\mathbb{Z}[H^1(Y;\mathbb{Z})]$ act trivially), $\underline{HF}^\circ(Y,\mathfrak{s};M)=HF^\circ(Y,\mathfrak{s})$. From this point of view, $\underline{HF}^\circ$ is a lift of $HF^\circ$. When $M=\Lambda_\omega$, $\underline{HF}^\circ(Y,\mathfrak{s};M)$ is the $\omega$-twisted Heegaard Floer homology group of $Y$. Note that there is a twisted version, $\underline{HF}(Y,\mathfrak{s}; \omega)$ (see \cite[Section 3.1]{OS4}) which is different from the $\omega$-twisted version defined above. In fact, $\underline{HF}(Y,\mathfrak{s}; \omega)$ is obtained by taking $M=\mathbb{Z}[\mathbb{R}]$ as a $\mathbb{Z}[H^1(Y;\mathbb{Z})]$-module. Therefore, $\underline{HF}(Y,\mathfrak{s}; \Lambda_\omega)$ is the Novikov completion of $\underline{HF}(Y,\mathfrak{s}; \omega)$.
\end{remark}

\subsection{$\omega$-Twisted graph cobordisms}\label{GraphCobordism} 
In the following, $\omega$-twisted graph cobordisms are reviewed. (See  \cite{Z21,JZ} for more details). 
Let $(Y_i,\mathbf{w}_i)$, $i=1,2$, be two multi-pointed three manifolds. 
\begin{defn}\cite[Definition 3.2]{Z15}
	A ribbon graph cobordism $(W,\Gamma)$ from $(Y_1,\mathbf{w}_1)$ to $(Y_2,\mathbf{w}_2)$ consists of a cobordism $W$ from $Y_1$ to $Y_2$ and an embedded finite graph $\Gamma$ such that
	\begin{enumerate}
		\item there is no vertex of valence zero in $\Gamma$;
		\item  $\Gamma\cap Y_i=\mathbf{w}_i$, $i=1,2$.  Furthermore, each point of $\mathbf{w}_i$ has
		valence $1$ in $\Gamma$;
		\item each edge of $\Gamma$ is smoothly embedded;
		\item each vertex of $\Gamma$ is decorated with a cyclic ordering of all the edges that meet at it.
	\end{enumerate}
	Such a decoration is called a formal ribbon structure. 
\end{defn}
When $W$ is obtained from $Y\times I$ by attaching 4-dimensional $i$-handles, $i=1,2,3$, away from the basepoints $\mathbf{w}\subset Y$, and $\Gamma=\mathbf{w}\times I$, $(W,\Gamma)$ is called a \emph{restricted graph cobordism}.
Let $(\mathcal{H}_i,J_i)$, $i=1,2$, denote a weakly admissible Heegaard diagram for $(Y_i,\mathbf{w}_i)$.  Let $[\omega]\in H^2(W;\mathbb{R})$,  and $\omega_i=\omega|_{Y_i}$, $i=1,2$. The cobordism $(W,\Gamma)$ induces a map
\[
\underline{f}^-_{W,\Gamma,\mathfrak{t};\Lambda_\omega}:\underline{CF}^-(\mathcal{H}_1,\mathbf{w}_1,\mathfrak{t}|_{Y_1};\Lambda_{\omega_1})\rightarrow \underline{CF}^-(\mathcal{H}_2,\mathbf{w}_2,\mathfrak{t}|_{Y_2};\Lambda_{\omega_2})
\]
on the chain complexes with $\omega$-twisted coefficient where $\mathfrak{t}\in\text{Spin}^c(W)$ (see \cite[Section 3]{JZ} and \cite[Subsection 12.3]{Z21}). This map is well-defined in the projectivized homotopy category which means that it is independent, up to homotopy, from different choices made in its definition and the following diagram is commutative up to homotopy and up to an overall factor $t^z$.
\begin{center}
	$\begin{CD}
	\underline{CF}^-(\mathcal{H}_1,\mathbf{w},\mathfrak{t}|_{Y_1};\Lambda_{\omega_1}) @> \underline{f}^-_{W,\Gamma,\mathfrak{t};\Lambda_\omega}>> \underline{CF}^-(\mathcal{H}_2,\mathbf{w},\mathfrak{t}|_{Y_2};\Lambda_{\omega_2})\\		@V\underline{\Psi}_{(\mathcal{H}_1,J_1)\longrightarrow(\mathcal{H}'_1,J'_1),\mathfrak{t}|_{Y_1}}VV @VV\underline{\Psi}_{(\mathcal{H}_2,J_2)\longrightarrow(\mathcal{H}'_2,J'_2),\mathfrak{t}|_{Y_2}}V\\
	\underline{CF}^-(\mathcal{H}'_1,\mathbf{w}, \mathfrak{t}|_{Y_1};\Lambda_{\omega_1}) @>\underline{f}^-_{W,\Gamma,\mathfrak{t};\Lambda_\omega}>> \underline{CF}^-(\mathcal{H}'_2,\mathbf{w}, \mathfrak{t}|_{Y_2};\Lambda_{\omega_2})
	\end{CD}$
\end{center}
where $\underline{\Psi}_{(\mathcal{H}_i,J_i)\longrightarrow(\mathcal{H}'_i,J'_i),\mathfrak{t}|_{Y_i}}$ denotes a transition map between two pairs of weakly admissible Heegaard diagrams $(\mathcal{H}_i,J_i)$ and $(\mathcal{H}'_i,J'_i)$ of $(Y_i,\mathbf{w}_i,\mathfrak{t}|_{Y_i})$ (see \cite[Section 7]{JZ}). 

\begin{remark}
	 When $\Gamma$ is a path (which is a connected graph with two vertices) in $W$ and $M$ is a $\mathbb{Z}[H^1(Y_1;\mathbb{Z})]$-module, there is an induced $\mathbb{Z}[H^1(Y_2;\mathbb{Z})]$-module $M(W)$ (see \cite[Subsection 2.7]{OZSVATH}) and the map 
	\[
	\underline{F}^\circ_{W,\mathfrak{t};M}:\underline{HF}^\circ(Y_1,\mathfrak{t}|_{Y_1};M)\rightarrow \underline{HF}^\circ(Y_2,\mathfrak{t}|_{Y_2};M(W))
	\]
	is uniquely defined up to multiplication by ±1, left-translation by an element of $H^1(Y_1;\mathbb{Z})$, and right translation by an element of $H^1(Y_2;\mathbb{Z})$ (see \cite[Theorem 3.8]{OZSVATH}). When $[\omega]\in H^2(W;\mathbb{R})$ and $M=\Lambda_{\omega_1}$, where $\omega_1=\omega|_{Y_1}$, then $M(W)=\Lambda_{\omega_2}$, where $\omega_2=\omega|_{Y_2}$.
\end{remark}

In the following, we briefly review the constructions of the induced cobordism maps. Note that there is a handle decomposition of $W$ obtained by  $i$-handle attachments, $0\leq i\leq 4$, away from the basepoints:
\[
W= W_0\cup W_1\cup W_2\cup W_3\cup W_4
\]
where $W_i$ consists of $i$-handles, $0\leq i\leq4$. This decomposition is induced by a Morse function $f$. Let $v$ denote a gradient-like vector field of $f$. We can assume that the graph $\Gamma$ is disjoint from the critical points of $f$, descending manifolds of index 1 critical points, ascending manifolds of index 3 critical points, and from both ascending and descending manifolds of index 2 critical points
(see  \cite[Lemma 10.4]{Z15}).
A ribbon graph cobordism $(W,\Gamma)$ induces two maps
\[
F^A_{W,\Gamma,\mathfrak{t}}, F^B_{W,\Gamma,\mathfrak{t}}:CF^-(Y_1,\mathbf{w}_1,\mathfrak{s}_1)\rightarrow CF^-(Y_2,\mathbf{w}_2,\mathfrak{s}_2),
\]
where $\mathfrak{t}\in \text{Spin}^c(W)$, $\mathfrak{s}_i=\mathfrak{t}|_{Y_i}$, $i=1,2$. These maps are diffeomorphism invariants of $(W,\Gamma)$, up to $\mathbb{F}_2[U]$-equivariant chain homotopy (see \cite[Theorem A]{Z15}).  When $\Gamma$ consists of a
collection of paths, each connecting $\mathbf{w}_1$ to $\mathbf{w}_2$, we have $F^A_{W,\Gamma,\mathfrak{t}}\simeq F^B_{W,\Gamma,\mathfrak{t}}$ (see \cite[Theorem B]{Z15}). 
$F^A_{W,\Gamma,\mathfrak{t}}$ and $F^B_{W,\Gamma,\mathfrak{t}}$ are a composition of maps induced by $i$-handle attachments, $0\leq i\leq4$, and \emph{graph action maps} $\mathfrak{U}_\mathcal{G}$ and $\mathfrak{B}_\mathcal{G}$, respectively, associated to $(Y,\mathcal{G}=(\Gamma,\mathbf{w}_0,\mathbf{w}_1))$ where $\mathcal{G}$ is an embedding of $\Gamma$ in $Y$ (see \cite{Z15} and the following discussion). There is a twisted cobordism map
\[ \underline{F}^B_{W,\Gamma,\mathfrak{t};\Lambda_\omega}:\underline{CF}^-(Y_1,\mathbf{w}_1,\mathfrak{s}_1;\Lambda_{\omega_1})\rightarrow \underline{CF}^-(Y_2,\mathbf{w}_2,\mathfrak{s}_2;\Lambda_{\omega_2}),
\]
which is described in \cite{Z21} and we briefly review the construction by describing the maps involved in the definition. We describe the maps that will be used in later sections in more details. We denote the map induced on the chain complex by $\underline{f}^-_{W,\Gamma,\mathfrak{t};\Lambda_\omega}$ and the map induced on the homology by $\underline{F}^-_{W,\Gamma,\mathfrak{t};\Lambda_\omega}$.
\begin{enumerate}
	\item $4$-dimensional $0$- and $4$-handle attachment are equivalent to adding or removing $(S^3,w_0)$. In this case, the associated map is induced by the isomorphism 
	\[
	\begin{split}
		\underline{CF}^-(Y\amalg S^3,\mathbf{w}\cup\{w_0\},\mathfrak{s}\oplus\mathfrak{s}_0;\Lambda_{\omega\oplus\omega_0})&\cong \underline{CF}^-(Y,\mathbf{w};\Lambda_\omega)\otimes_\Lambda \underline{CF}^-(S^3,w_0,\mathfrak{s}_0;\Lambda_{\omega_0})\\
		&\cong \underline{CF}^-(Y,\mathbf{w};\Lambda_\omega)\otimes_{\Lambda}\Lambda[U_{0}].
	\end{split}
	\]
	Here $\omega_0$ denotes the zero cohomology class. In fact, if $\mathcal{H}=(\Sigma,\boldsymbol{\alpha},\boldsymbol{\beta},\mathbf{w})$ is a weakly admissible Heegaard diagram for $Y$ and  $\mathcal{H}_0=(\Sigma_0,\{\alpha_0\},\{\beta_0\},w_0)$ is the standard Heegaard diagram for $S^3$ where $\alpha_0\cap\beta_0=\{x_0\}$, the associated map corresponding to a 0-handle attachment is 
	\[
	\begin{split}
		\underline{CF}^-(\mathcal{H},\mathfrak{s};\Lambda_\omega)&\rightarrow \underline{CF}^-(\mathcal{H}\amalg\mathcal{H}_0,\mathfrak{s}\otimes\mathfrak{s}_0;\Lambda_\omega\otimes\Lambda_{\omega_0})\\
		\mathbf{y}&\mapsto\mathbf{y}\otimes x_0,
	\end{split}
	\] 
	where $\mathbf{y}\in\mathbb{T}_{\boldsymbol{\alpha}}\cap\mathbb{T}_{\boldsymbol{\beta}}$ is a generator of $\underline{CF}^-(\mathcal{H},\mathfrak{s};\Lambda_\omega)$.
	\item If $(W,\Gamma):(Y,\mathbf{w})\rightarrow (Y',\mathbf{w})$ is a restricted graph cobordism given by $i$-handle additions, $i=1,2,3$, the induced maps are define in  \cite[Section 4]{OZSVATH} (see \cite[Sections 6 and 7]{JZ},  for more details).
\end{enumerate}

For a general ribbon graph cobordism, the graph action map $\mathfrak{U}_\mathcal{G}$ is a composition of maps associated to some \emph{elementary graphs}. For the case of restricted graph cobordisms, we only have one type of these elementary graphs called \emph{translations} for which the embedding $\mathcal{G}=(\Gamma,\mathbf{w}_0,\mathbf{w}_1)$ is such that $|\mathbf{w}_0|=|\mathbf{w}_1|$ and each edge of $\Gamma$ connects a vertex of $\mathbf{w}_0$ to a vertex of $\mathbf{w}_1$. There are two maps involved in the definition of a twisted $\underline{\mathfrak{U}}_\mathcal{G}$ associated to a translation: \emph{twisted free stabilization maps} $\underline{S}^\pm_w$ (which correspond to adding or removing a base point), and \emph{twisted relative homology maps} $\underline{A}_\lambda$ (which correspond to a path $\lambda$ between two basepoints $w_1$ and $w_2$).  A twisted graph action map associated to a restricted graph cobordism, where $\mathcal{G} = (\Gamma, \mathbf{w}_0, \mathbf{w}_1)$ corresponds to a translation, is as follows:
\[
\underline{\mathfrak{U}}_\mathcal{G}=(\prod_{w\in\mathbf{w}_0}\underline{S}^-_{w})\circ(\prod_{e\in E(\Gamma)} \underline{A}_e)\circ(\prod_{w\in\mathbf{w}_1}\underline{S}^+_{w}).
\]
See \cite{Z15} for more details. Here we recall the definition of twisted free stabilization maps (see \cite{Z21} for a definition of twisted relative homology maps).

\begin{figure}[h!]
	\def\svgwidth{5.5cm}
	\begin{center}
		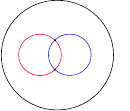
		\caption{The doubly pointed Heegaard diagram $\mathcal{H}_0$ used in the definition of free stabilization maps. The Heegaard surface is a sphere.}
		\label{HD-Sphere-2}
	\end{center}
\end{figure}

Let $z\in\mathbf{w}$ and $\mathcal{H}_0=(S^2,\{\alpha\},\{\beta\},\{w_0,w_1\})$ denote a Heegaard diagram with $\alpha\cap\beta=\{\theta^+,\theta^-\}$ where $\theta^+$ is the generator with the higher grading (see Figure \ref{HD-Sphere-2}). Let $\mathcal{H}'=(\Sigma,\boldsymbol{\alpha}\cup\{\alpha\},\boldsymbol{\beta}\cup\{\beta\},\mathbf{w}\cup\{w_0\})$ denote the Heegaard diagram for $(Y,\mathbf{w}\cup\{w_0\})$ obtained by the connected sum of $\mathcal{H}$ and $\mathcal{H}_0$. To perform the connected sum, we remove the interiors of two disks around $z\in\Sigma$ and $w_1\in S^2$ and identify the boundaries of these disks. We denote the  basepoint in this region by $z$.   
For appropriate choices of almost complex structure (see  \cite[\textcolor{blue}{Definition 6.2 and Proposition 6.3}]{Z15}), one can define the free stabilization maps
\[
\begin{aligned}
\underline{S}^+_{w_0}:\underline{CF}^-(Y,\mathbf{w},\mathfrak{s};\Lambda_\omega)&\rightarrow\underline{CF}^-(Y,\mathbf{w}\cup\{w_0\},\mathfrak{s};\Lambda_\omega)\\
&\underline{S}^+_{w_0}(\mathbf{x})=\mathbf{x}\times\theta^+,
\end{aligned}
\]
and
\[
\begin{aligned}
	&\underline{S}^-_{w_0}:\underline{CF}^-(Y,\mathbf{w}\cup\{w_0\},\mathfrak{s};\Lambda_\omega)\rightarrow\underline{CF}^-(Y,\mathbf{w},\mathfrak{s};\Lambda_\omega)\\
	&\underline{S}^-_{w_0}(\mathbf{x}\times\theta^-)=\mathbf{x},\ \ \ \underline{S}^-_{w_0}(\mathbf{x}\times\theta^+)=0.
\end{aligned}
\]
The twisted free stabilization maps are well-defined chain maps by the same argument as the
twisted 1-handle and 3-handle maps (see \cite[Section 7.3]{JZ} and \cite[Section 6]{Z15} for more details).

Note that when $W$, $Y_1$, and $Y_2$ are non-empty and connected, $\mathbf{w}_i=\{w_i\}$, $i=1,2$, and $\Gamma$ is a path from $w_1$ to $w_2$, $\underline{F}^-_{W,\Gamma,\mathfrak{s};\omega}$ coincides with the map defined by Ozsv\'{a}th and Szab\'{o} (see \cite{OZSVATH}).

\subsection{Absolute grading and surgery exact sequence}\label{Gradings}
For this subsection, we work over $\mathbb{Z}[H^1(Y;\mathbb{Z})]$.
When $\mathfrak{s}$ is torsion, $HF^\circ(Y,\mathfrak{s})$ is generated by homogeneous elements $\mathfrak{U}$ which are equipped  with a relative grading function
\[
\text{gr}:\mathfrak{U}\times\mathfrak{U}\rightarrow\mathbb{Z}.
\]
This relative $\mathbb{Z}$-grading is lifted to an absolute $\mathbb{Q}$-grading
\[
\widetilde{\text{gr}}:\mathfrak{U}\rightarrow\mathbb{Q},
\]
in the sense that if $\eta,\xi\in\mathfrak{U}$, 
\[
\text{gr}(\eta,\xi)=\widetilde{\text{gr}}(\eta)-\widetilde{\text{gr}}(\xi).
\]
See \cite[Theorem 7.1]{OZSVATH} for more details. A map induced by a cobordism $W$ from $Y_1$ to $Y_2$, which is endowed with a $\text{Spin}^c$ structure $\mathfrak{t}$ whose restriction to $Y_i$ is torsion, shifts the grading by the formula
\begin{equation}\label{shift-gr}
\widetilde{\text{gr}}(F_{W,\mathfrak{t}}(\xi))-\widetilde{\text{gr}}(\xi)=\frac{c_1(\mathfrak{t})^2-2\chi(W)-3\sigma(W)}{4},
\end{equation}
where $\sigma(W)$ denotes the signature of the intersection form 
\[
\begin{aligned}
Q:&H_2(W)\otimes H_2(W)\rightarrow\mathbb{Z}\\ &(a,b)\mapsto a\cdot b.
\end{aligned}
\] 
Note that every class $a\in H_2(W)$ can  be represented by a smoothly embedded
closed oriented surface and $a\cdot b$ denotes the signed intersection of the surfaces that represent $a$ and $b$. To define $c_1(\mathfrak{t})^2$, we recall some facts.  From the long exact sequence
\[
H^2(W,\partial W;\mathbb{Q})\rightarrow H^2(W;\mathbb{Q})\rightarrow H^2(\partial W;\mathbb{Q})
\]
and the assumption that $c_1(\mathfrak{t}|_{\partial W})$ is torsion, or equivalently $c_1(\mathfrak{t}|_{\partial W})=0\in H^2(\partial W;\mathbb{Q})$, there is $x\in H^2(W,\partial W;\mathbb{Q})$ whose image is $c_1(\mathfrak{t})$. From here, $c_1(\mathfrak{t})^2$ is defined as $\langle c_1(\mathfrak{t})\cup x,[W]\rangle$. 

Let $K$ be a knot in an integer homology sphere $Y$ and $Y_i=Y_i(K)$ be obtained from $Y$ by performing an $i$-surgery along $K$, $i=0,1$. There is a long exact sequence 
\[
\dots\rightarrow HF^\circ(Y)\xrightarrow{f_1^\circ} HF^\circ(Y_0)\xrightarrow{f_2^\circ} HF^\circ(Y_1)\xrightarrow{f_3^\circ}HF^\circ(Y)\rightarrow\dots
\]
where the maps $f_i^\circ$, $i=1,2,3$, are induced from cobordisms $W_i$ which correspond to attaching 2-handles (see \cite[Theorem 9.1]{OZSVATHAbsolutelygraded}). Therefore, for each cobordism, we have $\chi(W_i)=1$.  
To compute $\sigma(W_i)$, recall that when a 4-manifold $W$ with boundary is obtained  by integral surgery on a framed link $\mathcal{L}$ in $Y$, its intersection form is isomorphic to the linking matrix of $\mathcal{L}$ (when $Y=S^3$, this is Theorem 6.2 of \cite{Saveliev+2012} which is true when we replace $S^3$ with a homology sphere). In the case at study, since $W_1$  is obtained by attaching a 0-framed 2-handle to $Y\times[0,1]$, $\sigma(W_1)=0$. Also $W_2$ is obtained from $Y_0\times[0,1]$ by attaching a single 2-handle where we perform -1-surgery on an unknot $U$ which links $K$ geometrically once.  As a result, $W_1\cup_{Y_0} W_2$ is a 4-manifold obtained by performing integral surgery on the two component link $K\amalg U$ in $Y$, with linking matrix $\big(\begin{smallmatrix}
	-1&1\\
	1&0
\end{smallmatrix}\big)$. Therefore, $\sigma(W_1)+\sigma(W_2)=\sigma(W_1\cup W_2)=0$ (see \cite[Theorem 5.3]{Kirby}). This shows that $\sigma(W_2)=0$.

Note that $f^\circ_i$, $i=1,2$, are sums of maps associated to $W_i$, which means that 
\[
f^\circ_i=\sum_{\mathfrak{t}\in\text{Spin}^c(W_i)}F^\circ_{W_i,\mathfrak{t}}.
\]
Also, $\text{Spin}^c$
structures on both $W_1$ and $W_2$ are uniquely
determined by their restrictions to $Y_0$.
Let $\mathfrak{t}_0$ denote the $\text{Spin}^c$ structure over $Y_0$ with trivial first Chern class. Using Formula \eqref{shift-gr} and the fact that $\sigma(W_i)=0$, $i=1,2$, the component of $f_1^\circ$ that maps into $HF^\circ(Y_0,\mathfrak{t}_0)$ shifts degree by $-1/2$ and the restriction of $f_2^\circ$ to $HF^\circ (Y_0,\mathfrak{t}_0)$ shifts degree by $-1/2$. Also, $f^\circ_3$ is non-increasing on the grading.  Indeed, $f^\circ_3=F^\circ_{W_3}$ where $W_3$ is the reverse of a cobordism $\overline{W_3}$ obtained from $-Y$ by attaching a 2-handle along $K$ with framing 1. Since $\sigma(\overline{W_3})=1$ and the orientation of $W_3$ is the opposite of the orientation of $\overline{W_3}$, we have $\sigma(W_3)=-1$. Therefore,
\begin{equation}\label{EQ1}
\widetilde{\text{gr}}(F^\circ_{W_3,\mathfrak{t}}(\xi))-\widetilde{\text{gr}}(\xi)=\frac{c_1(\mathfrak{t})^2+1}{4}.
\end{equation}

We recall the computation of $c_1(\mathfrak{t})^2$ when $W$ is a cobordism between two homology spheres with the intersection form $(p)$. First recall that
$H^2(W)$ acts freely and transitively on $\text{Spin}^c(W)$ and the first Chern class is a map  $c_1:\text{Spin}^c(W)\rightarrow H^2(W)$ such that $c_1(h+\mathfrak{t})=2h+c_1(\mathfrak{t})$, $h\in H^2(W)$, $\mathfrak{t}\in\text{Spin}^c(W)$. A cohomology class $x\in H^2(W)$ is called a \emph{characteristic element} if $x\cup h\equiv h\cup h\mod 2$, for all $h\in H^2(W,\partial W)$.
Since $c_1(\mathfrak{t})\equiv w_2(\mathfrak{t})\mod 2$ ($w_i$ denotes the $i^\text{th}$-Stiefel-Whitney class), the Wu formula shows that $c_1(\mathfrak{t})$ is a characteristic element:
\[
\begin{split}
	&w_2=\sum_{i+j=2}Sq^i(v_j)=v_2+v_1\cup v_1=v_2,\\
	&w_2\cup h=v_2\cup h=Sq^2(h)=h\cup h.
\end{split}
\]
Here $h\in H^2(W,\partial W;\mathbb{F}_2)$, $v_i$ denotes the relative Wu class and $Sq^i$ is the Steenrod square (see \cite[Section 7]{Kervaire}). Note that $v_1=w_1=0$. 

Let $H^2(W)\cong\mathbb{Z}\langle y\rangle$ and $H^2(W,\partial W)\cong\mathbb{Z}\langle z\rangle$. Therefore, $c_1(\mathfrak{t})\cup z\equiv z\cup z\mod 2$.
From the long exact sequence
\[
0=H^1(\partial W)\rightarrow H^2(W,\partial W)\rightarrow H^2(W)\rightarrow H^2(\partial W)\cong\mathbb{Z}_p\rightarrow H^3(W,\partial W)\cong H_1(W)=0,
\]
the generator $z$ is mapped to $py\in H^2(W)$.
Let $c_1(\mathfrak{t})=ky$, for some $k\in\mathbb{Z}$. Since $c_1(\mathfrak{t})$ is a characteristic element, we have 
\[
\begin{split}
	& \langle c_1(\mathfrak{t})\cup z,[W]\rangle\equiv \langle z\cup z,[W]\rangle\mod2,\\
	& \langle z\cup z,[W]\rangle=p,\\
	&\langle y\cup z,[W]\rangle=1/p\langle z\cup z,[W]\rangle.
\end{split}
\]
This shows that $k\equiv p\mod2$. Let $c_1(\mathfrak{t}_j)=(2j+p)y$. We want to compute $c_1(\mathfrak{t})^2$.
\begin{equation}\label{EQ2} c_1(\mathfrak{t}_j)^2=\frac{(2j+p)^2}{p}\langle y\cup z,[W]\rangle=\frac{(2j+p)^2}{p^2}\langle z\cup z,[W]\rangle=\frac{(2j+p)^2}{p}.
\end{equation}
For the cobordism $W_3$, we have $p=-1$. If use the formulas in \eqref{EQ2} and \eqref{EQ1}, we see that $f^\circ_3$ is non-increasing on the grading.

To state the twisted surgery exact sequence, note that we have $\mathbb{Z}[H^1(Y_0)]\cong\mathbb{Z}[t,t^{-1}]$ where $t$ is a generator of $H^1(Y_0;\mathbb{Z})$. If $M$ denotes a $\mathbb{Z}[U]$-module, there is an induced $\mathbb{Z}[U,t,t^{-1}]$-module structure on $M[t,t^{-1}]=M\otimes\mathbb{Z}[t,t^{-1}]$. There is a $\mathbb{Z}[U,t,t^{-1}]$-equivariant long exact sequence
\[
\dots\rightarrow HF^+(Y)[t,t^{-1}]\xrightarrow{\underline{f}_1^+}\rightarrow\underline{HF}^+(Y_0)\xrightarrow{\underline{f}_2^+}\rightarrow HF^+(Y_1)[t,t^{-1}]\xrightarrow{\underline{f}_3^+}\dots
\] 
(see \cite[Theorem 9.21]{OS2}). There is a similar exact sequence for the hat version.

Note that for a closed oriented, $\text{Spin}^c$ 3-manifold $(N,\mathfrak{s})$, there is an $\mathfrak{s}$-grading on $\mathbb{Z}[H^1(N;\mathbb{Z})]$ 
\[
\text{gr}_\mathfrak{s}(x)=-\langle c_1(\mathfrak{s})\cup x,[N]\rangle.
\]
where $x\in H^1(N;\mathbb{Z})$, which makes $\mathbb{Z}[H^1(N;\mathbb{Z})]$ into a graded ring (see \cite[Definition 3.1]{JM}). This equips the fully twisted Heegaard Floer homology groups with a relative $\mathbb{Z}$-grading.

\begin{defn}\cite[Definition 3.2]{JM}
	 Let $(\Sigma, \boldsymbol{\alpha}, \boldsymbol{\beta}, z)$ be a pointed Heegaard triple describing the 3-manifold $N$.
	Fix a $\textnormal{Spin}^c$ structure $\mathfrak{s}$ for $N$ and an additive assignment $\{A_{\mathbf{x},\mathbf{y}}\}$ for the diagram. The relative
	$\mathbb{Z}$ grading between generators $[\mathbf{x},i]$ and $[\mathbf{y},j]$ for $\underline{CF}^\circ(N, \mathfrak{s}; \mathbb{Z}[H^1(N;\mathbb{Z})])$ is defined by
	\[
	\underline{\text{\normalfont gr}}([\mathbf{x}, i], [\mathbf{y}, j]) = \mu(\phi) + 2(i - j)-  2n_z(\phi) - \langle c_1(\mathfrak{s})\cup A_{\mathbf{x},\mathbf{y}}(\phi), [N ]\rangle,
	\]
	where $\phi$ is any element of $\pi_2(\mathbf{x}, \mathbf{y})$. More generally, if $r_1, r_2 \in\mathbb{Z}[H^1(N;\mathbb{Z})]$ are homogeneous elements,
	then we set
	\[
	\underline{\text{\normalfont gr}}(r_1 \cdot [\mathbf{x}, i], r_2 \cdot [\mathbf{y}, j]) =\underline{ \text{\normalfont gr}}([\mathbf{x}, i], [\mathbf{y}, j]) + \text{\normalfont gr}_\mathfrak{s}(r_1) -\text{\normalfont gr}_\mathfrak{s}(r_2).
	\]
\end{defn}

Here, since $Y$ and $Y_1$ are homology spheres, $\underline{\text{gr}}=\text{gr}$. Also, for the $\text{Spin}^c$ manifold $(Y_0,\mathfrak{t}_0)$, $\underline{\text{gr}}=\text{gr}$. Therefore, the component of $\underline{f}_1^+$ that maps into $\underline{HF}^+(Y_0,\mathfrak{t}_0)$ shifts degree by $-1/2$ and the restriction of $\underline{f}^+_2$ to $\underline{HF}^+ (Y_0,\mathfrak{t}_0)$ shifts degree by $-1/2$. Also, $\underline{f}^+_3$ is non-increasing on the grading.

\subsection{Sums and compositions of twisted cobordism maps} \label{SumsCom} 
\textbf{Completion of Heegaard Floer homology groups. }Let $A$ be a ring and $I$ be an ideal of $A$. Denote the completion of $A$ with respect to this ideal by $\boldsymbol{A}$. If $M$ is an $A$-module, the completion $\boldsymbol{M}$ of $M$ is an $\boldsymbol{A}$-module. As an example, when $A=\mathbb{F}_2[U]$ (resp. $A=\mathbb{F}_2[U,U^{-1}]$) and $I=(U)$, then $\boldsymbol{A}=\mathbb{F}_2\llbracket U\rrbracket$, the ring of formal power series (resp. $\boldsymbol{A}=\mathbb{F}_2\llbracket U,U^{-1}]$ ring of semi-infinite Laurent polynomials). $\boldsymbol{CF}^-$ (resp. $\boldsymbol{CF}^\infty$), the completion of the chain complex $CF^-$ (resp. $CF^\infty$), is a chain complex with the same generators as of $CF^-$ (resp. $CF^\infty$) with coefficients in $F_2\llbracket U\rrbracket$ (resp.  $\mathbb{F}_2\llbracket U,U^{-1}]$). Since completion is an exact functor, $\boldsymbol{HF}^-$, the homology group of $\boldsymbol{CF}^-$, is the completion of $HF^-$. Similarly, $\boldsymbol{HF}^\infty$ is the completion of $HF^\infty$. One can define the completions for $\widehat{HF}$ and $HF^+$ similarly but since the action of $U$ is nilpotent on each generator, the completion matches with the original group. Therefore, we have the following exact sequence for a closed, oriented three manifold $Y$:
\begin{equation}\label{ES-1}
\cdots\longrightarrow\boldsymbol{HF}^-(Y)\longrightarrow\boldsymbol{HF}^\infty(Y)\longrightarrow HF^+(Y)\longrightarrow\cdots.
\end{equation}
Let \[\boldsymbol{HF}_{\text{red}}(Y):=\text{Coker}(\boldsymbol{HF}^\infty(Y)\rightarrow HF^+(Y))\cong\text{Ker}(\boldsymbol{HF}^-(Y)\longrightarrow\boldsymbol{HF}^\infty(Y)).
\]

Note that for the definition of $HF^-$ and $HF^\infty$, one needs to restrict to strongly admissible Heegaard diagrams. However, Lemma 4.13 of \cite{OS} shows that the differentials for $\boldsymbol{CF}^-$ and $\boldsymbol{CF}^\infty$ are finite even for weakly admissible Heegaard diagrams. Also, note that for any torsion Spin$^\text{c}$ structure $\mathfrak{s}$, the groups
$HF^-(Y,\mathfrak{s})$ and $HF^\infty(Y,\mathfrak{s})$ are determined by $\boldsymbol{HF}^-$ and $\boldsymbol{HF}^\infty$, respectively, where $\mathfrak{s}\in\text{Spin}^\text{c}(Y)$ (see \cite[Section 2]{Manolescu2010HeegaardFH}). But for a non-torsion Spin$^\text{c}$ structure $\mathfrak{s}$, as $(1-U^N)HF^\infty(Y,\mathfrak{s})=0$, for some $N\geq1$ and $1-U^N$ is invertible in  $\mathbb{Z}\llbracket U\rrbracket$, we have $\boldsymbol{HF}^\infty(Y,\mathfrak{s})=0$. Therefore, from the long exact sequence in \ref{ES-1}, we have
$\boldsymbol{HF}^-(Y,\mathfrak{s})\cong HF^+(Y,\mathfrak{s})$, where $\mathfrak{s}$ is non-torsion (see \cite[Section 2]{Manolescu2010HeegaardFH}).

When we consider the completion of Heegaard Floer homology groups with respect to $(U)$ and with coefficients in $\Lambda_\omega$, we also have the long exact sequence
\[
\cdots\longrightarrow\underline{\boldsymbol{HF}}^-(Y,\mathfrak{s};\Lambda_\omega)\longrightarrow\underline{\boldsymbol{HF}}^\infty(Y,\mathfrak{s};\Lambda_\omega)\longrightarrow \underline{HF}^+(Y,\mathfrak{s};\Lambda_\omega)\longrightarrow\cdots.
\]
By Corollary 8.7 of \cite{JM}, when $(Y,\mathfrak{s})$ is a $\text{Spin}^c$ manifold, $c_1(\mathfrak{s})$ is torsion, and $[\omega]\neq0$, $\underline{\boldsymbol{HF}}^\infty(Y,\mathfrak{s};\Lambda_\omega)=0$. This implies that 
\[
\underline{\boldsymbol{HF}}^-(Y,\mathfrak{s};\Lambda_\omega)\cong \underline{HF}^+(Y,\mathfrak{s};\Lambda_\omega)
\]
It is worth mentioning that for torsion Spin$^\text{c}$ structures,
\[
\underline{HF}^-(Y,\mathfrak{s};\Lambda_\omega)\cong \underline{HF}^+(Y,\mathfrak{s};\Lambda_\omega).
\]
This follows from the fact that $\underline{HF}^\infty(Y,\mathfrak{s};\Lambda_\omega)=0$ when $c_1(\mathfrak{s})$ is torsion and $[\omega]\neq0$ (see \cite[Corollary 8.5]{JM}). Therefore,
\[
\underline{\boldsymbol{HF}}_{\text{red}}(Y,\mathfrak{s};\Lambda_\omega)=\underline{HF}^+(Y,\mathfrak{s};\Lambda_\omega)\cong\underline{\boldsymbol{HF}}^-(Y,\mathfrak{s};\Lambda_\omega),
\]
and
\[ \underline{HF}_{\text{red}}(Y,\mathfrak{s};\Lambda_\omega)=\underline{HF}^+(Y,\mathfrak{s};\Lambda_\omega)\cong\underline{HF}^-(Y,\mathfrak{s};\Lambda_\omega).
\]

\textbf{Sums and compositions of twisted cobordism maps.} Let $(W,\Gamma)$ be a restricted graph cobordism from $(Y_1,\mathbf{w}_1)$ to $(Y_2,\mathbf{w}_2)$. Let $[\omega]\in H^2(W;\mathbb{R})$ and $\omega_i=\omega|_{Y_i}$, $i=1,2$. Assume further that $\mathfrak{T}$ be a subset of $\{\mathfrak{t}\in\text{Spin}^\text{c}(W)\mid\mathfrak{t}|_{Y_1}=\mathfrak{s}_1, \mathfrak{t}|_{Y_2}=\mathfrak{s}_2\}$. The map
\[
\underline{\boldsymbol{f}}^-_{W,\Gamma,\mathfrak{T};\Lambda_\omega}:\underline{\boldsymbol{CF}}^-(\mathcal{H}_1,\mathbf{w}_1,\mathfrak{s}_1;\Lambda_{\omega_1})\rightarrow\underline{\boldsymbol{CF}}^-(\mathcal{H}_2,\mathbf{w}_2,\mathfrak{s}_2;\Lambda_{\omega_2})
\]
is defined as $\sum\limits_{\mathfrak{t}\in\mathfrak{T}}\underline{\boldsymbol{f}}^-_{W,\Gamma,\mathfrak{t};\Lambda_\omega}$, where $\mathcal{H}_i$, $i=1,2$, is a Heegaard diagram for $Y_i$. Since the ground ring is $\Lambda
\llbracket U\rrbracket$, there is no need for $\mathfrak{T}$ to be finite. Also note that by  \cite[Theorem 3.3]{OZSVATH}, this sum is well defined over the power series ring. 

\begin{lemma}\label{lem-Zemke}\cite[Lemma 12.4]{Z21}
	The map $\underline{\boldsymbol{f}}^-_{W,\Gamma,\mathfrak{T};\Lambda_\omega}$ is well defined up to an overall factor $t^z$, $z\in\mathbb{R}$. 		
\end{lemma}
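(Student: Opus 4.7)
The sum in the statement is a priori a sum of equivalence classes in the projectivized homotopy category, each individually well-defined only up to multiplication by some factor $t^{z_{\mathfrak{t}}}$. So the content of the lemma is that the ambiguities $z_{\mathfrak{t}}$ can be chosen uniformly in $\mathfrak{t}\in\mathfrak{T}$; then the single remaining scalar $t^z$ that multiplies every term simultaneously factors out of the sum, leaving a well-defined map up to one global factor.

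My plan is to trace where the factor $t^{z_{\mathfrak{t}}}$ in a single $\underline{\boldsymbol{f}}^-_{W,\Gamma,\mathfrak{t};\Lambda_\omega}$ comes from, and check that every such source of ambiguity is $\mathfrak{t}$-independent. Following the construction reviewed in Section~\ref{GraphCobordism}, the map is a composition of: (i)~the $0$- and $4$-handle isomorphisms (strict equalities, no $t^z$ ambiguity); (ii)~twisted $1$-, $2$-, $3$-handle maps; and (iii)~twisted graph action maps $\underline{\mathfrak{U}}_{\mathcal{G}}$, themselves built from twisted free stabilizations $\underline{S}^{\pm}_w$ and twisted relative homology maps $\underline{A}_e$. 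The projective ambiguity arises only from (a)~the choice of a cocycle representative $\omega\in[\omega]$, (b)~the choices of auxiliary paths used to define $\underline{A}_e$ (including cones filling the $\alpha$- and $\beta$-boundaries of domains to obtain $\widetilde{D}(\phi)$), and (c)~the choices of auxiliary Heegaard data and almost complex structures encoded in the transition maps. I will verify step-by-step that each of (a), (b), (c) is made purely topologically or diagrammatically, and in particular does not reference the $\text{Spin}^c$ structure $\mathfrak{t}$.

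The key computation is (a): replacing $\omega$ by $\omega+d\eta$ changes the weight $t^{\omega([\phi])}$ of a counted curve by $t^{\int_{\widetilde D(\phi)}d\eta}=t^{\int_{\partial\widetilde D(\phi)}\eta}$, and $\partial\widetilde D(\phi)$ depends only on the input and output generators (together with the chosen cones), not on $\mathfrak{t}$. Hence the cocycle change rescales every generator-to-generator matrix entry of $\underline{\boldsymbol{f}}^-_{W,\Gamma,\mathfrak{t};\Lambda_\omega}$ by the \emph{same} factor $t^z$, simultaneously for all $\mathfrak{t}$. The same reasoning applies to (b): changing a path $e$ or the filling cones shifts $\int_{\widetilde D}\omega$ by a quantity that depends only on the cycle representing the difference of the two choices and on $[\omega]$, and not on $\mathfrak{t}$. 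For (c), the transition map analysis of \cite{JZ}, together with the fact that transition maps between different Heegaard data are $\mathfrak{t}$-independent naturality isomorphisms (up to a global $t^z$), gives the same conclusion.

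Putting these pieces together, after fixing one set of auxiliary choices every $\underline{\boldsymbol{f}}^-_{W,\Gamma,\mathfrak{t};\Lambda_\omega}$ is well-defined as an honest chain map (not merely up to $\dot{\simeq}$), and any change of auxiliary data multiplies them all by a common $t^z$. Finally, since the coefficient ring is $\Lambda\llbracket U\rrbracket$ (and the sum converges for any $\mathfrak{T}$ as noted before the statement via \cite[Theorem 3.3]{OZSVATH}), the infinite sum $\sum_{\mathfrak{t}\in\mathfrak{T}}\underline{\boldsymbol{f}}^-_{W,\Gamma,\mathfrak{t};\Lambda_\omega}$ makes sense term-by-term, and the uniform scaling by $t^z$ factors out, giving the claimed well-definedness up to one overall $t^z$. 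The main obstacle I anticipate is the careful bookkeeping in item (b): verifying that the ambiguities in choosing paths and cones for $\underline{A}_e$ across different $\mathfrak{t}$-components really are controlled by the same topological data, which requires unpacking the definitions of twisted relative homology maps from \cite{Z21} and checking their naturality under varying $\mathfrak{t}$.
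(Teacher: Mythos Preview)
The paper does not give its own proof of this lemma: it is stated with the citation \cite[Lemma 12.4]{Z21} and no argument is supplied here. The only commentary is the sentence immediately following the statement, which paraphrases the content as ``when we change the auxiliary data used to construct the cobordism map, each $\underline{\boldsymbol{f}}^-_{W,\Gamma,\mathfrak{T};\Lambda_\omega}$ changes by the same factor of $t^z$, where $z$ is independent of $\mathfrak{T}$.'' So there is no in-paper proof to compare against; your outline is effectively a reconstruction of what the cited reference does.

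Your plan is sound and matches that one-sentence summary: you correctly identify that the projective ambiguity comes from (a) the cocycle representative, (b) the choices of cones/paths entering $\widetilde D(\phi)$ and the $\underline{A}_e$ maps, and (c) the Heegaard/almost-complex transition data, and that none of these choices references $\mathfrak{t}$. The core computation in (a)---that $\int_{\widetilde D(\phi)} d\eta = \int_{\partial \widetilde D(\phi)} \eta$ depends only on endpoints---is exactly the right mechanism.

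One point you underemphasize, and which is worth making explicit since the paper flags it right after the lemma: the hypothesis that every $\mathfrak{t}\in\mathfrak{T}$ restricts to the \emph{same} $\mathfrak{s}_1$ and $\mathfrak{s}_2$ on the ends is what makes your item (c) work. The boundary transition maps $\underline{\Psi}_{(\mathcal H_i,J_i)\to(\mathcal H_i',J_i'),\mathfrak{s}_i}$ are literally the same maps for every $\mathfrak{t}\in\mathfrak{T}$, not merely $\mathfrak{t}$-independently ambiguous; without fixed boundary $\mathrm{Spin}^c$ structures the naturality squares for different $\mathfrak{t}$ would live in different diagrams and there would be no reason for the factors to agree. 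This is why the paper stresses, just below the lemma, that ``we need to require that all the $\mathrm{Spin}^c$ structures $\mathfrak{t}\in\mathfrak{T}$ restrict to the same $\mathrm{Spin}^c$ structures on $Y_1$ and $Y_2$.'' Your sketch would be strengthened by stating this explicitly rather than folding it into the vaguer phrase ``$\mathfrak{t}$-independent naturality isomorphisms.''
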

This lemma states that when we change the auxiliary data used to construct the cobordism map, each $\underline{\boldsymbol{f}}^-_{W,\Gamma,\mathfrak{T};\Lambda_\omega}$ changes
by the same factor of $t^z$, where $z$ is independent of $\mathfrak{T}$.
In general, we need to require that all the Spin$^\text{c}$ structures $\mathfrak{t}\in\mathfrak{T}$ restrict to the same Spin$^\text{c}$ structures on $Y_1$ and $Y_2$. Indeed, when $\omega\neq0$, the twisted
Heegaard Floer groups  $\underline{HF}^\circ(Y,\mathfrak{s};\Lambda_\omega)$  are well-defined up to an overall factor $t^z$. Therefore, these groups are natural only when we restrict to one Spin$^\text{c}$ structure at a time. In general, one needs to check if the maps 
\[
\underline{f}^\circ_{W,\Gamma;\Lambda_\omega}=\sum_{\mathfrak{s}\in\text{Spin}^c(W)}\underline{f}^\circ_{W,\Gamma,\mathfrak{s};\Lambda_\omega}.
\]
are well defined.

We will need the following version of composition law.
\begin{lemma}\label{Composition}\cite[Lemma 12.5]{Z21}
Suppose that
\[
(W_1,\Gamma_1):(Y,\mathbf{w})\rightarrow(Y',\mathbf{w}'),\ \ \text{and},\ \ (W_2,\Gamma_2):(Y',\mathbf{w}')\rightarrow(Y'',\mathbf{w}'')
\]
are graph cobordisms. Write $(W,\Gamma)=(W_2,\Gamma_2)\circ(W_1,\Gamma_1)$. If $\mathfrak{S}_1\subset\textnormal{Spin}^c(W_1)$ and $\mathfrak{S}_2\subset\textnormal{Spin}^c(W_2)$
are sets of $\textnormal{Spin}^c$
structures which all have the same restrictions to $Y$, $Y'$, and $Y''$, write $\mathfrak{S}(W,\mathfrak{S}_1,\mathfrak{S}_2)$
for the set of $\textnormal{Spin}^c$
structures on $W$ which restrict to an element of $\mathfrak{S}_1$ and an element of $\mathfrak{S}_2$. Then
\[
\underline{\boldsymbol{F}}^B_{W,\Gamma,\mathfrak{S};\Lambda_\omega}\dot{\simeq}\underline{\boldsymbol{F}}^B_{W_2,\Gamma_2,\mathfrak{S}_2;\Lambda_{\omega_2}}\circ\underline{\boldsymbol{F}}^B_{W_1,\Gamma_1,\mathfrak{S}_1;\Lambda_{\omega_1}}.
\]
where 
$\omega_i=\omega|_{W_i}$. If $\omega$ vanishes on one of the 3-manifolds $Y$, $Y'$, or $Y''$, we may relax the requirement
that all elements of $\mathfrak{S}_1$ and $\mathfrak{S}_2$ have the same restriction to that 3-manifold.
\end{lemma}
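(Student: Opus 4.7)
The plan is to reduce the twisted composition law to its untwisted counterpart by tracking the $\omega$-weight through the standard chain homotopy, and then to promote the statement from individual $\text{Spin}^c$ structures to the full sum using the uniformity furnished by Lemma~\ref{lem-Zemke}.

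First I would fix a single $\mathfrak{t}\in\mathfrak{S}(W,\mathfrak{S}_1,\mathfrak{S}_2)$ and write $\mathfrak{t}_i=\mathfrak{t}|_{W_i}\in\mathfrak{S}_i$. The untwisted graph TQFT composition law (from Zemke, or in the restricted-cobordism case the Ozsv\'ath--Szab\'o associativity argument on a Heegaard multi-diagram simultaneously presenting $W_1$, $W_2$, $W$, and the intermediate copy of $Y'$) supplies an honest chain homotopy
\[
\boldsymbol{f}^B_{W,\Gamma,\mathfrak{t}} \simeq \boldsymbol{f}^B_{W_2,\Gamma_2,\mathfrak{t}_2} \circ \boldsymbol{f}^B_{W_1,\Gamma_1,\mathfrak{t}_1},
\]
built from counts of holomorphic polygons (triangles for an individual handle attachment, rectangles for the two-step composition).

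Second, I would insert the weight $t^{\omega(\widetilde{D}(\phi))}$ into each polygon count. The key geometric input is additivity of $\omega$ under concatenation: whenever a polygon $\phi$ on $W$ degenerates as $\phi_1\#\phi_2$ through an intermediate generator on $Y'$, the associated 2-chains satisfy $\widetilde{D}(\phi)=\widetilde{D}(\phi_1)+\widetilde{D}(\phi_2)$ up to a 2-chain supported on $Y'$, and the $\omega$-evaluation of that correction is precisely the factor recorded by the $\Lambda_{\omega|_{Y'}}$-module structure on $\underline{CF}^-(Y',\mathfrak{s}';\Lambda_{\omega|_{Y'}})$. Applied to both sides of the untwisted homotopy equation, the weighted polygon counts match up to a single overall factor $t^z$, producing the twisted chain homotopy for the fixed pair $(\mathfrak{t}_1,\mathfrak{t}_2)$.

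Third, I would sum over $\mathfrak{t}\in\mathfrak{S}$ and pair with the corresponding sums over $\mathfrak{S}_1$ and $\mathfrak{S}_2$; convergence in $\Lambda\llbracket U\rrbracket$ is automatic. The hard part is ensuring that the factor $t^z$ is \emph{uniform} across the sum, which is exactly the content of Lemma~\ref{lem-Zemke} and is why one must require all elements of $\mathfrak{S}_i$ to restrict identically to $Y$, $Y'$, and $Y''$: only then does a single choice of auxiliary data pin down $z$ independently of $\mathfrak{t}$. If $\omega$ vanishes on $Y'$ (say), then $\Lambda_{\omega|_{Y'}}=\Lambda$ carries the trivial $H^1(Y';\mathbb{Z})$-action, the intermediate correction discussed above becomes weightless, and the common-restriction hypothesis over $Y'$ can be dropped; the argument on $Y$ or $Y''$ is symmetric.
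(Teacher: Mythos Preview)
The paper does not give its own proof of this lemma: it is simply quoted from \cite[Lemma 12.5]{Z21}, so there is nothing in the present paper to compare against. Your outline is a reasonable sketch of the argument one finds in Zemke's paper---reduce to the untwisted composition law for a single $\text{Spin}^c$ structure, observe that the $\omega$-weight is additive on concatenated polygon domains (with the correction absorbed by the $\Lambda_{\omega|_{Y'}}$-module structure), and then invoke the uniformity statement of Lemma~\ref{lem-Zemke} to sum. The explanation of why the common-restriction hypothesis can be dropped when $\omega$ vanishes on a boundary component is also the right one.
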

\begin{remark}\label{Rmrk-Completed-vs-usual-Novikov}
	In the following section, we work with 3-manifolds $Y$ such that $\underline{HF}^\circ(Y,\mathfrak{s};\Lambda_\omega)\cong0$ where $0\neq[\omega]\in H^2(Y;\mathbb{R})$ and $c_1(\mathfrak{s})\neq0$. Therefore, the above argument shows that Lemma \ref{lem-Zemke} is still valid  when we replace $\underline{\boldsymbol{f}}^-_{W,\Gamma,\mathfrak{T};\Lambda_\omega}$ with $\underline{f}^-_{W,\Gamma,\mathfrak{T};\Lambda_\omega}$ (for such 3-manifolds). Also, Lemma \ref{Composition} is true when we replace $\underline{\boldsymbol{F}}^B_{W,\Gamma,\mathfrak{S};\Lambda_\omega}$ with $\underline{F}^B_{W,\Gamma,\mathfrak{S};\Lambda_\omega}$.
\end{remark}

\section{Proof of Theorem \ref{MainThm}}\label{Section 2}
The proof is a slight modification of the proof of Theorem \ref{FLoer-Ex}. We use the following result from \cite{A-P2010}. In this section, $\mathcal{H}$, $\mathcal{H}_0$, etc., will be used to describe Heegaard diagrams that are possibly different from the same notations used in the previous section.

\begin{thm}[\cite{A-P2010}]\label{Thm-2}
	Suppose $Y$ is a closed, oriented 3-manifold that fibers over the circle with torus fiber $\mathcal{F}$, and $[\omega]\in H^2(Y;\mathbb{R})$ is a cohomology class such that $\omega(\mathcal{F})\neq0$. Then we have an isomorphism of $\Lambda$-modules
	\[
	\underline{HF}^+(Y;\Lambda_\omega)\cong\Lambda.
	\]
\end{thm}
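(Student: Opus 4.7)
The plan is to prove $\underline{HF}^+(Y;\Lambda_\omega) \cong \Lambda$ via a three-step strategy: first, restrict the sum over Spin$^c$ structures using adjunction; second, establish the base case of the trivial bundle $Y = T^3$; third, propagate to general torus bundles using the twisted surgery exact sequence.

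First, I would invoke the adjunction inequality for Heegaard Floer homology to show that only Spin$^c$ structures $\mathfrak{s}$ with $\langle c_1(\mathfrak{s}), [\mathcal{F}]\rangle = 0$ can contribute non-trivially to $\underline{HF}^+(Y;\Lambda_\omega)$. Indeed, the fiber $\mathcal{F}$ is an essential embedded torus, so $2g(\mathcal{F})-2 = 0$ pins the pairing to zero. Combined with the vanishing results of Corollaries~8.5 and~8.7 of \cite{JM}, which give $\underline{HF}^\infty(Y,\mathfrak{s};\Lambda_\omega) = 0$ whenever $[\omega] \neq 0$ and $c_1(\mathfrak{s})$ is torsion, this isolates the small collection of Spin$^c$ structures that actually contribute, and in particular, using the hypothesis $\omega(\mathcal{F}) \neq 0$, rules out non-torsion Spin$^c$ structures whose Chern class evaluates non-trivially on $[\mathcal{F}]$.

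Second, I would handle the base case $Y = T^3$ directly via the explicit genus-three Heegaard diagram of Ozsv\'{a}th-Szab\'{o}. In the untwisted setting, $HF^+(T^3)$ is concentrated at the unique torsion Spin$^c$ structure and its rank is larger than one. The key point is that when $\omega(\mathcal{F}) \neq 0$, the periodic domains of the diagram that cover the fiber class acquire non-trivial weights $t^c$ with $c \neq 0$. A direct chain-level calculation then shows that certain chains that were cycles in the untwisted differential become non-closed (up to units) in the $\omega$-twisted differential, collapsing the homology to a single copy of $\Lambda$. This yields $\underline{HF}^+(T^3;\Lambda_\omega) \cong \Lambda$.

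Third, for a general torus bundle $Y_\phi$ with monodromy $\phi \in SL(2,\mathbb{Z})$, I would present $Y_\phi$ as an integer Dehn surgery on a knot $K$ in a simpler torus bundle, chained ultimately down to $T^3$. The surgery curves can be chosen disjoint from a fixed representative of $\mathcal{F}$, so that the hypothesis $\omega(\mathcal{F}) \neq 0$ persists through each surgery step. The twisted surgery exact sequence from Theorem~9.21 of \cite{OS2} then produces a long exact triangle of $\Lambda$-modules; combining the Spin$^c$ restriction from Step~1 with the $\underline{HF}^\infty$-vanishing, and the fact that $\Lambda$ is a field, a rank-counting argument forces the triangle to collapse with $\underline{HF}^+(Y_\phi;\Lambda_\omega) \cong \Lambda$.

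The main obstacle will be the hyperbolic (Solv-geometry) monodromies, where $H_1(Y;\mathbb{Z})$ is tightly constrained and the bookkeeping of Spin$^c$ structures under the surgery cobordisms is most subtle. In this case, one must check that (i) the Spin$^c$-restriction condition pins down a unique contributing $\mathfrak{s}$ rather than a family, (ii) the cobordisms in the surgery triangle admit cohomology classes extending the chosen $\omega$ and still restricting non-trivially to $[\mathcal{F}]$ at each stage, and (iii) the connecting map in the exact triangle does not contribute a spurious copy of $\Lambda$ that would prevent the collapse. Controlling these $\omega$-weights along the cobordism maps $\underline{F}^-_{W,\Gamma,\mathfrak{t};\Lambda_\omega}$, and checking that the relevant cobordism maps are units in $\Lambda$ on the appropriate summands, is likely the most delicate step.
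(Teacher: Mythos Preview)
The paper does not prove this statement; it is quoted verbatim from Ai--Peters \cite{A-P2010} and used as a black box throughout Section~\ref{Section 2}. So there is no ``paper's own proof'' to compare against beyond the original reference.

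That said, your outline is broadly the same strategy as Ai--Peters: adjunction to kill Spin$^c$ structures with $\langle c_1(\mathfrak{s}),[\mathcal{F}]\rangle\neq 0$, a direct computation for $T^3$, and a surgery-triangle induction over $SL(2,\mathbb{Z})$. Two points where your sketch is looser than their argument: (i) Ai--Peters do not work directly in $\Lambda_\omega$ throughout; for the torsion Spin$^c$ structure they instead use Ozsv\'ath--Szab\'o's computation of the fully twisted $\underline{HF}^+(Y,\mathfrak{t}_0)$ over $\mathbb{Z}[H^1(Y;\mathbb{Z})]$ and then pass to $\Lambda_\omega$ via the universal coefficient theorem (exactly the move reproduced in this paper's proof of Lemma~\ref{Lem-Ex}). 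This sidesteps your proposed ``direct chain-level calculation'' for $T^3$, which is doable but fiddlier than you suggest. (ii) Your inductive Step~3 is underspecified: a surgery triangle relates three torus bundles, and a rank count over the field $\Lambda$ only gives a parity constraint, not the value $1$. Ai--Peters organize the induction by the trace of the monodromy and, crucially, arrange each triangle so that one vertex has already been computed (often a manifold with $b_1>1$ where the answer is known independently), which is what actually forces the collapse. Your worry about the hyperbolic case is real, and their resolution is precisely this careful ordering of the induction rather than a generic rank argument.
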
 

Let $\mathcal{H}=(\Sigma,\boldsymbol{\alpha},\boldsymbol{\beta},\mathbf{w}_0\cup\{z\})$ be a multi-pointed Heegaard diagram for a multi-pointed 3-manifold $(Y,\mathbf{w}_0\cup\{z\})$ and $\mathcal{H}_0=(S^2,\alpha,\beta,\{w_0,w_1\})$ denote the doubly pointed Heegaard diagram in Figure \ref{HD-Sphere-2} with $\alpha\cap\beta=\{\theta^\pm\}$ where $\text{gr}(\theta^+,\theta^-)=1$. Let \[\mathcal{H}_1=(\Sigma\# S^2,\boldsymbol{\alpha}\cup\{\alpha\},\boldsymbol{\beta}\cup\beta,\mathbf{w}_0\cup\{w_0,w_1\})
\] 
be the connected sum of $\mathcal{H}$ and $\mathcal{H}_0$  where 
the connected sum is formed at the points $z$ and $w_1$. Let $[\omega]\in H^2(Y;\mathbb{R})$. The following theorem is a twisted version of \cite[Proposition 6.5]{OS3}.

\begin{thm}\label{Thm-1}
	Let $\mathcal{H}$, $\mathcal{H}_0$, and $\mathcal{H}_1$ be as above, then $\underline{CF}^-(\mathcal{H}_1,\mathfrak{s};\Lambda_\omega)$ is identified with the mapping cone of
	\[
	\underline{CF}^-(\mathcal{H},\mathfrak{s};\Lambda_\omega)\otimes_{\Lambda}\Lambda[U_{w_0}]\langle\theta^-\rangle\xrightarrow{U_{w_0}-U_{z}}\underline{CF}^-(\mathcal{H},\mathfrak{s};\Lambda_\omega)\otimes_\Lambda\Lambda[U_{w_0}]\langle\theta^+\rangle.
	\]
\end{thm}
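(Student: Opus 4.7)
The plan is to adapt the proof of the untwisted analog \cite[Proposition 6.5]{OS3} by carefully tracking the Novikov weights $t^{\omega([\phi])}$ attached to each holomorphic disk. The key new observation will be that the disks introduced by the $S^2$ summand are null-homologous in $Y$, and hence carry trivial Novikov weight; the $\omega$-twisting therefore enters only through the two parallel copies of $\underline{CF}^-(\mathcal{H},\mathfrak{s};\Lambda_\omega)$.

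First I would identify the underlying module. Since $\mathbb{T}_{\boldsymbol{\alpha}\cup\{\alpha\}}\cap\mathbb{T}_{\boldsymbol{\beta}\cup\{\beta\}}=(\mathbb{T}_{\boldsymbol{\alpha}}\cap\mathbb{T}_{\boldsymbol{\beta}})\times\{\theta^+,\theta^-\}$, and since the basepoints $z$ and $w_1$ lie in the same region of $\mathcal{H}_1$ after the connected sum (so that $U_z$ and $U_{w_1}$ act identically on $\underline{CF}^-(\mathcal{H}_1,\mathfrak{s};\Lambda_\omega)$), there is a natural $\Lambda[U_{w_0},U_z]$-module decomposition of $\underline{CF}^-(\mathcal{H}_1,\mathfrak{s};\Lambda_\omega)$ into $\theta^+$- and $\theta^-$-summands that matches the underlying module of the claimed mapping cone.

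Next, I would count the index-one holomorphic curves in $\mathcal{H}_1$ by stretching the almost complex structure along the connect-sum neck. By the standard Gromov compactness/gluing argument from \cite{OS3} (or the cylindrical reformulation in \cite{RL}), for sufficiently stretched necks every Maslov-index-one curve in $\mathcal{H}_1$ degenerates into a matched pair of a curve in $\mathcal{H}$ and a curve in $\mathcal{H}_0$. Two types arise: (i) a non-constant disk $\phi_\Sigma$ in $\mathcal{H}$ paired with the constant disk at $\theta^\pm$ on the sphere side, yielding the twisted differential of $\mathcal{H}$ acting on each $\theta^\pm$-summand separately with the same weights $U_\mathbf{w}^{n_\mathbf{w}(\phi_\Sigma)}t^{\omega([\phi_\Sigma])}$ as in $\underline{CF}^-(\mathcal{H},\mathfrak{s};\Lambda_\omega)$; and (ii) a constant disk at some $\mathbf{x}$ paired with one of the two bigons in $S^2$ from $\theta^+$ to $\theta^-$, namely the bigon whose domain contains $w_0$ and the one whose domain contains $w_1$.

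The main obstacle, and the only step where the twisting enters nontrivially, is verifying that the bigons of type (ii) carry Novikov weight $t^0=1$. For such a bigon $\phi_{S^2}$, the coned-off 2-chain $\widetilde{D}(\phi_{S^2})$ in $Y$ is built by capping the $\alpha$- and $\beta$-boundary arcs with gradient flowlines in the two handlebodies. Because $\mathcal{H}_1=\mathcal{H}\#\mathcal{H}_0$ describes $Y\# S^3=Y$, the $S^2$ summand and the capping disks of its $\alpha$- and $\beta$-arcs all lie in the $S^3$ connect-summand, and hence in a 3-ball in $Y$; consequently $\widetilde{D}(\phi_{S^2})$ is null-homologous in $Y$ and $\omega([\phi_{S^2}])=0$. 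Combining the contributions of (i) and (ii), and recalling that we work over $\mathbb{F}_2$ so signs are trivial, gives the connecting map $U_{w_0}+U_{w_1}=U_{w_0}-U_z$, completing the mapping cone identification.
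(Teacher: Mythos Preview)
Your overall strategy matches the paper's, but your enumeration of the index-one disks has two gaps. First, you omit the $C_{\theta^-}$ component of $\underline{\partial}(\mathbf{x}\times\theta^+)$: this off-diagonal receives contributions from the two bigons in $\mathcal{H}_0$ that do \emph{not} contain a basepoint, and in the twisted setting one must check that these carry equal Novikov weight before concluding they cancel over $\mathbb{F}_2$. The paper does this by noting that $[D_1-D_2]$ is null-homologous in $Y$.

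Second, and more substantively, your description of the two connecting-map disks as ``constant on $\Sigma$ paired with a bigon on $S^2$'' is incorrect for the bigon through $w_1$. Since $w_1$ is the connect-sum point, any disk in $\mathcal{H}_1$ whose $S^2$-piece passes through $w_1$ has neck multiplicity one, forcing the $\Sigma$-piece to be the full class $[\Sigma]$ rather than the constant class; the actual domain in $\mathcal{H}_1$ is $\Sigma\# D_2$, not just $D_2$. Your ``lies in a $3$-ball'' argument therefore does not compute $\omega([\phi_1])$ for this disk. The paper instead shows $\omega([\phi_1])=\omega([\phi_2])$ by observing that $[\phi_2-\phi_1]$ is homologous to the Heegaard surface $\Sigma\# S^2$, which bounds a handlebody in $Y$; the connecting map is then $t^{c}(U_{w_0}+U_z)$ for a single unit $t^{c}$, from which the mapping-cone identification follows. (Incidentally, the basepoint-containing bigons run from $\theta^-$ to $\theta^+$, not the reverse.)
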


Before stating the proof, we recall the definition of mapping cone. Let $(A,\partial_A)$ and $(B,\partial_B)$ denote two $\mathbb{F}_2$-graded chain complexes and $f:A\rightarrow B$ be a chain map. The mapping cone $M(f)$ is the chain complex with the underlying group $A\oplus B$ endowed with a differential $(a,b)\mapsto(\partial_A(a),(-1)^{\text{gr}(a)}f(a)+\partial_B(b))$.
There is a short exact sequence of chain maps
\begin{equation}\label{MappingCone}
	0\rightarrow B\xrightarrow{\iota} Cone(f)\xrightarrow{\pi} A\rightarrow0,
\end{equation}
such that the connecting homomorphism in the associated long exact sequence is the map on
homology induced by $f$.
\begin{proof}
	We modify the proof of  \cite[Proposition 6.5]{OS3} for $\omega$-twisted coefficients. Choose an almost complex structure such that the neck length of the connected sum is very big. We have $\underline{CF}^-(\mathcal{H}_1,\mathfrak{s};\Lambda_\omega)\cong C_{\theta^+}\oplus C_{\theta^-}$, where $C_{\theta^\pm}$ denote all the generators $\mathbf{x}\times\theta^\pm$, $\mathbf{x}\in\mathbb{T}_{\boldsymbol{\alpha}}\cap\mathbb{T}_{\boldsymbol{\beta}}$.
	
	Let $\phi\in\pi_2(\mathbf{x}\times\theta^+,\mathbf{y}\times\theta^+)$ or $\phi\in\pi_2(\mathbf{x}\times\theta^-,\mathbf{y}\times\theta^-)$. We have $\phi=\phi_1\#\phi_2$ where $\phi_1\in\pi_2(\mathbf{x},\mathbf{y})$ is a Whitney disk in $\mathcal{H}$ and $\phi_2\in\pi_2(\theta^+,\theta^+)$ or $\phi_2\in\pi_2(\theta^-,\theta^-)$ is a Whitney disk in $\mathcal{H}_0$. As discussed in \cite{OS3}, the only case where $\phi$ has a holomorphic representative is when
	\[
	\mu(\phi_2)=2n_{w_1}(\phi),\ \ n_{w_0}(\phi)=0.
	\]
	Using \cite[Theorem 5.1]{OS3}, the $\mathbf{y}\times \theta^\pm$ component of $\underline{\partial}^-(\mathbf{x}\times\theta^\pm)$ is 
	\[
	\sum_{\substack{\{\phi_1\in\pi_2(\mathbf{x},\mathbf{y})|\mu(\phi_1)=1\}}}\sum_{\substack{u_1\in\widehat{\mathcal{M}}(\phi_1)}}\sum_{\substack{\phi_2\in\pi_2(\theta^\pm,\theta^\pm)}}\#\{u_2\in\mathcal{M}(\phi_2)|\rho_1(u_1)=\rho_2(u_2)\}t^{\omega([\phi_1])+\omega([\phi_2])}.
	\]
	Here $\rho_i:\mathcal{M}(\phi_i)\rightarrow Sym^k(\mathbb{D})$ with $\rho_1(u)=u^{-1}(\{z\}\times Sym^{d_1-1}(\Sigma))$, $\rho_2(u)=u^{-1}(\{w_1\})$, $d_1$ is the number of curves in $\boldsymbol{\beta}$, $k=n_z(\phi_1)=n_{w_1}(\phi_2)$, and $\mathbb{D}$ is the unit disk in $\mathbb{C}$.
	Note that $\partial D(\phi_2)$ is a union of $\alpha$- and $\beta$-curves in $\mathcal{H}_0$ and $[\phi_2]$, which is obtained by coning the $\alpha$ and $\beta$ boundaries of $D(\phi_2)$, is a sum of copies of $S^2$, which bounds a 3-ball in $Y$. Therefore $\omega([\phi_2])=0$. This shows that the $\mathbf{y}\times \theta^\pm$ component of $\underline{\partial}^-(\mathbf{x}\times\theta^\pm)$ is identified with the $\mathbf{y}$ component of $\underline{\partial}^-(\mathbf{x})$. 
	
	The argument in the proof of Proposition 6.5 of \cite{OS3} shows that if  $\mathbf{y}\times\theta^-$ appears in $\underline{\partial}^-(\mathbf{x}\times\theta^+)$, domains of the Whitney disks that connect  $\mathbf{y}\times\theta^-$  to $\mathbf{x}\times\theta^+$ consist of the two bigons $D_1$ and $D_2$ connecting $\theta^-$ to $\theta^+$ away from the  basepoints in $\mathcal{H}_0$. The same argument as the previous case shows that $\omega([D_1-D_2])=0$. Therefore the corresponding disks in the differential cancel and the $C_{\theta^-}$ component of $\underline{\partial}^-(\mathbf{x}\times\theta^+)$ is trivial.
	
	Finally, as shown in the proof of Proposition 6.5 \cite{OS3}, the $C_{\theta^+}$ component of $\underline{\partial}(\mathbf{x}\times \theta^-)$ is given by 
	\[
	U_{z}\ t^{\omega([\phi_1])}\ \mathbf{x}\times\theta^++U_{w_0}\ t^{\omega([\phi_2])}\ \mathbf{x}\times\theta^+,
	\]
	where $\phi_2$ is a disk such that its domain is a bigon connecting $\theta^-$ to $\theta^+$ which contains $w_0$ and $\phi_1$ is a disk such that its domain is $\Sigma\#D_2$ where $D_2$ is a bigon in $\mathcal{H}_0$ connecting $\theta^-$ to $\theta^+$ and containing $w_1$. $\partial D(\phi_2-\phi_1)$ is a union of $\alpha$- and $\beta$-curves in $\mathcal{H}_1$ and $[\phi_2-\phi_1]$ is homotopic to the Heegaard surface $\Sigma\#S^2$. Therefore $\omega([\phi_1])=\omega([\phi_2])$ (as $\Sigma\#S^2$ is the boundary of a handlebody in $Y$). This completes the proof. 
\end{proof}
 \begin{cor}\label{C-1}
 	If $U_0\neq U_1$ in Theorem \ref{Thm-1},
 	 $\underline{S}^+_{w_0}$ induces isomorphism on $\underline{HF}^\circ$, 
 	 $\circ\in\{+,-,\infty\}$, and $\underline{S}^-_{w_0}$ induces zero maps. 
 \end{cor}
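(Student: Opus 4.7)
The strategy is to exploit the mapping cone description of $\underline{CF}^-(\mathcal{H}_1,\mathfrak{s};\Lambda_\omega)$ supplied by Theorem \ref{Thm-1}. Since any mapping cone $\mathrm{Cone}(f\colon A\to B)$ fits into a short exact sequence $0\to B\xrightarrow{\iota}\mathrm{Cone}(f)\xrightarrow{\pi} A\to 0$ whose induced connecting homomorphism on homology is $f_*$, Theorem \ref{Thm-1} yields a long exact sequence
\[
\cdots\to\underline{HF}^-(\mathcal{H},\mathfrak{s};\Lambda_\omega)[U_{w_0}]\xrightarrow{U_{w_0}-U_z}\underline{HF}^-(\mathcal{H},\mathfrak{s};\Lambda_\omega)[U_{w_0}]\xrightarrow{\iota_*}\underline{HF}^-(\mathcal{H}_1,\mathfrak{s};\Lambda_\omega)\xrightarrow{\pi_*}\underline{HF}^-(\mathcal{H},\mathfrak{s};\Lambda_\omega)[U_{w_0}]\to\cdots .
\]

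The next step is to identify $\underline{S}^\pm_{w_0}$ in this picture. The defining formula $\underline{S}^+_{w_0}(\mathbf{x})=\mathbf{x}\times\theta^+$ says exactly that $\underline{S}^+_{w_0}$ equals the composition $\mathbf{x}\mapsto\mathbf{x}\otimes 1$ followed by $\iota$. Similarly, the formulas $\underline{S}^-_{w_0}(\mathbf{x}\times\theta^-)=\mathbf{x}$ and $\underline{S}^-_{w_0}(\mathbf{x}\times\theta^+)=0$ identify $\underline{S}^-_{w_0}$ as $\pi$ post-composed with the $\Lambda[U]$-linear quotient $U_{w_0}^k\mathbf{x}\mapsto U_z^k\mathbf{x}$. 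Thus on homology $\underline{S}^-_{w_0}$ factors through $\pi_*$ and $\underline{S}^+_{w_0}$ factors through $\iota_*$.

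The crux is then the algebraic observation that, when $U_{w_0}$ and $U_z$ are treated as distinct formal variables (the hypothesis $U_0\neq U_1$), multiplication by $U_{w_0}-U_z$ is \emph{injective} on $\underline{HF}^-(\mathcal{H},\mathfrak{s};\Lambda_\omega)[U_{w_0}]$. This is a leading-coefficient argument: if $\sum_{k=0}^N a_kU_{w_0}^k$ lies in the kernel with $a_N\neq 0$, equating coefficients of $U_{w_0}^{N+1}$ in $(U_{w_0}-U_z)\sum a_kU_{w_0}^k$ forces $a_N=0$, a contradiction. Given injectivity, $\pi_*=0$ and hence $\underline{S}^-_{w_0}$ vanishes on homology, while $\iota_*$ is surjective with kernel the image of $U_{w_0}-U_z$. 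This yields the identification
\[
\underline{HF}^-(\mathcal{H}_1,\mathfrak{s};\Lambda_\omega)\cong\underline{HF}^-(\mathcal{H},\mathfrak{s};\Lambda_\omega)[U_{w_0}]/(U_{w_0}-U_z)\cong\underline{HF}^-(\mathcal{H},\mathfrak{s};\Lambda_\omega),
\]
under which $\underline{S}^+_{w_0}$ becomes the identity, and is in particular an isomorphism.

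For $\circ\in\{+,\infty\}$ the same argument applies using the analogous mapping cone descriptions (tensoring with $\Lambda[U,U^{-1}]$ for $\infty$, or passing to the quotient for $+$), or alternatively by a five lemma comparison with the long exact sequence relating the three flavors. The main point requiring care is the identification of $\underline{S}^-_{w_0}$ with $\pi$ followed by the substitution $U_{w_0}\mapsto U_z$: one needs to verify that this chain-level description is compatible with the cone differential from Theorem \ref{Thm-1} and with the $\Lambda[U]$-module structure, which follows from the explicit formulas for $\underline{S}^-_{w_0}$ together with the description of the $U_{w_0}$- and $U_z$-actions coming from the connected sum.
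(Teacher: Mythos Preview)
Your proof is correct and follows essentially the same route as the paper: both use the mapping cone description from Theorem~\ref{Thm-1}, pass to the associated long exact sequence, observe that multiplication by $U_{w_0}-U_z$ is injective on $\underline{HF}^-(\mathcal{H},\mathfrak{s};\Lambda_\omega)[U_{w_0}]$ to obtain the quotient identification, and then read off the claims about $\underline{S}^\pm_{w_0}$ from their definitions. The paper is terser (it asserts the quotient formula and says the rest is direct from the definitions) and handles $\circ=+$ via compatibility with the long exact sequence relating the flavors, which amounts to the five-lemma argument you mention.
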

\begin{proof}
When $\circ\in\{-,\infty\}$, from Theorem \ref{Thm-1} and the long exact sequence induced from the short exact sequence in \ref{MappingCone}, we have 
\[
\underline{HF}^-(\mathcal{H}_1,\mathfrak{s};\Lambda_\omega)\cong\frac{\underline{HF}^-(\mathcal{H},\mathfrak{s};\Lambda_\omega)[U_{w_0}]\langle\theta^+\rangle}{U_{w_0}-U_z}.
\]
Then the result is obtained directly from the definition of free stabilization maps. The case $\circ\in\{+\}$ is implied from the fact that the free stabilization maps are compatible with the long exact sequence in \ref{ES-1}. 
\end{proof}

Let  $\mathcal{F}=S^1\times S^1$ and $Y=\mathcal{F}\times S^1$. Assume that $W$ is a cobordism which is obtained from  $\mathcal{F}\times D^2$ by removing a $4$-ball. 
More precisely, let $D_0\subset D^2$ be a small disk.  
Remove a small neighborhood of $D_0$ from $\mathcal{F}\times D^2$ to obtain the cobordism $W$ from $S^3$ to $Y$ such that $(\{p\}\times(\overline{D^2\setminus D_0}))\cap S^3$ is the circle $\partial D_0$ and $p\in \mathcal{F}$. 
Suppose $w_0\in S^3$ and $w\in Y$ are basepoints.  Let $\Gamma\subset W$ be any path connecting $w_0$ to $w$. Let $\eta=\{p\}\times (S^1=\partial D^2)\subset Y$ and $[\omega]=d\cdot\text{PD}([\eta])\in H^2(Y;\mathbb{R})$ where $d\neq0$.  Suppose $[\bar{\omega}]\in H^2(W;\mathbb{R})$ be $d\cdot\text{PD}(\{p\}\times (\overline{D^2\setminus D_0}))$. 

\begin{lemma}\label{Lem-2}
	With the above notation, the map 
	\[
	\underline{F}^-_{W,\Gamma;\Lambda_{\bar{\omega}}}:\Lambda[U_0]\cong \underline{HF}^-(S^3,w_0,\mathfrak{s}_0;\Lambda)\rightarrow \underline{HF}^-(Y,w;\Lambda_\omega)\cong\Lambda
	\]
	is well-defined up to an overall factor and is a nonzero map. 
\end{lemma}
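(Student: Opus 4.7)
The lemma has two parts --- well-definedness up to a $t^z$-factor, and non-vanishing --- which I would address in sequence.

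\textbf{Well-definedness.} The target 3-manifold $Y=T^3$ satisfies the hypothesis of Remark \ref{Rmrk-Completed-vs-usual-Novikov}: by Corollary 8.5 of \cite{JM}, $\underline{HF}^-(Y,\mathfrak{s};\Lambda_\omega)=0$ for every non-torsion $\mathfrak{s}$ and every nonzero $[\omega]$. Consequently, Lemma \ref{lem-Zemke} applies directly in the non-completed Novikov setting and yields well-definedness of $\underline{F}^-_{W,\Gamma;\Lambda_{\bar\omega}}$ up to an overall $t^z$. The implicit infinite sum over $\mathrm{Spin}^c(W)$ (a $\mathbb{Z}$-torsor, since $H_2(W)\cong\mathbb{Z}\langle\mathcal{F}\rangle$) restricting appropriately to the two boundary components converges in $\Lambda$ because $\bar\omega(\mathcal{F})=d\neq 0$ forces the Novikov exponents of successive summands to tend to infinity.

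\textbf{Algebraic reduction.} By Theorem \ref{Thm-2} combined with the isomorphism $\underline{HF}^-\cong\underline{HF}^+$ from Section \ref{SumsCom} (applicable because $c_1(\mathfrak{s}_0')=0$ and $[\omega]\neq 0$), the target is $\underline{HF}^-(Y,w;\Lambda_\omega)\cong\Lambda$. Since $\Lambda$ is a field and $\underline{HF}^\infty(Y,\mathfrak{s}_0';\Lambda_\omega)=0$, the $U$-action on the target cannot be invertible and therefore must vanish. The cobordism map is $\Lambda[U]$-linear with $U=U_{w_0}=U_w$ identified through $\Gamma$, so it annihilates $U_0\cdot\Lambda[U_0]$ and factors through $\Lambda[U_0]/(U_0)\cong\Lambda$ as multiplication by some $c\in\Lambda$. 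The claim thus reduces to $c\neq 0$.

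\textbf{Non-vanishing.} I would invoke the composition law. Let $\bar W:Y\to S^3$ be the reverse cobordism; the composition $\bar W\circ W$ is diffeomorphic to $(T^2\times S^2)\setminus(B^4\sqcup B^4)$, viewed as a cobordism $S^3\to S^3$. By Lemma \ref{Composition} (extended via Remark \ref{Rmrk-Completed-vs-usual-Novikov}), the composite of the two cobordism maps equals, up to a $t^z$-factor, the twisted cobordism map of this 4-manifold, which in turn computes the $\omega$-twisted Ozsv\'ath--Szab\'o invariant of $T^2\times S^2$ with a 2-cocycle that is positive on the torus fiber. Because $T^2\times S^2$ is K\"ahler with $b_2^+=1$ and the chosen class provides an admissible cut along $T^3$, this twisted invariant is nonzero, hence $c\neq 0$. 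The main obstacle is exactly this final non-vanishing statement; a clean proof will probably require either adapting the Ozsv\'ath--Szab\'o treatment of K\"ahler surfaces with $b_2^+=1$ to the $\omega$-twisted setting, or a direct Heegaard-triple computation using the standard handle decomposition of $T^2\times D^2$ (one 0-handle, two 1-handles, and one 2-handle attached along the commutator of the 1-handles) together with Theorem \ref{Thm-1} and the twisted surgery triangle from Section \ref{Gradings}.
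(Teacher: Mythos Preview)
Your well-definedness argument is essentially the paper's: Theorem~\ref{Thm-2} ensures the target is supported in a single $\mathrm{Spin}^c$ structure, so Lemma~\ref{lem-Zemke} and Remark~\ref{Rmrk-Completed-vs-usual-Novikov} apply.

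For non-vanishing, however, your approach is incomplete. You reduce to showing that the twisted Ozsv\'ath--Szab\'o invariant of $T^2\times S^2$ (with a class positive on the torus fiber) is nonzero, and then concede that establishing this ``will probably require'' either an adaptation of the K\"ahler $b_2^+=1$ theory to the $\omega$-twisted setting or a Heegaard-triple computation. Neither of these is carried out, and neither is available off the shelf in the references; so as written this is a gap, not a proof.

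The paper avoids this entirely by a different, self-contained trick. Rather than capping off $W$ with its reverse and computing a closed invariant, it \emph{embeds} $W$ into a cobordism whose induced map is already known to be an isomorphism. Concretely, let $P$ be a pair of pants and consider $W_2=\mathcal{F}\times P$ as a cobordism $Y\sqcup Y\to Y$. Precomposing $W_2$ with $(Y\times I)\sqcup W$ (and a $0$-handle) yields the product cobordism $Y\times I'$ with a two-component graph; by Corollary~\ref{C-1} this induces the free stabilization $\underline{S}^+_{w_0}$, an isomorphism. Since the map of $(Y\times I)\sqcup W$ is $\mathrm{id}\otimes_\Lambda\underline{F}^-_{W,\Gamma;\Lambda_{\bar\omega}}$, non-vanishing of $\underline{F}^-_{W,\Gamma;\Lambda_{\bar\omega}}$ follows immediately. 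This pair-of-pants factorization is the key idea you are missing; it requires no $4$-manifold invariant computation and no appeal to K\"ahler geometry.
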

\begin{figure}[h]
	\begin{center}
		\def\svgwidth{0.5\textwidth}
		\fontsize{13}{15}\selectfont
		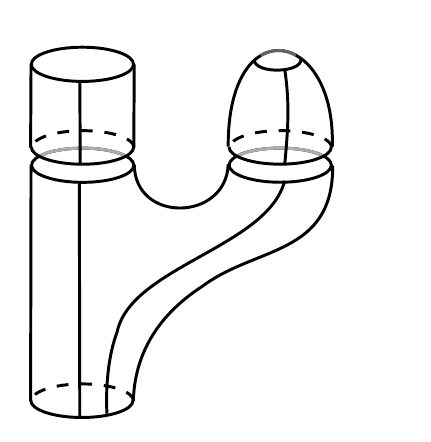
	\end{center}
\caption{This is Figure A.4 of \cite{phdthesis}.}
\label{Fig-Pair-of-Pants}
\end{figure}
\begin{proof}
	The proof is a straightforward modification of the proof of Lemma A.1.33 from \cite{phdthesis}. 
	Let $P$ denote a pair of pants as shown in Figure \ref{Fig-Pair-of-Pants}.
	Let $Y_i$, $i=1,2,3$, be three copies of $Y$, $w_i\in Y_i$, $i=1,2,3$, and $w'_3\in Y_3$ be basepoints. Let $W_1=Y_1\times I$, $W_2= \mathcal{F}\times P$, $\Gamma_1=\{w_1\}\times I\subset W_1$, and $\Gamma_2\subset W_2$ denote two paths  connecting $w_1$ to $w_3$ and $w_2$ to $w'_3$. Suppose $\eta_i$ and $\omega_i$, $i=1,2,3$, are copies of $\eta$ and $\omega$ in $Y_i$, $[\bar{\omega}_2]=d\cdot\text{PD}([\{p\}\times P])\in H^2(W_2;\mathbb{R})$, and $[\bar{\omega}_1]=d\cdot\text{PD}([\mu_1\times I])\in H^2(W_1;\mathbb{R})$.

	Note that the map $\underline{F}^-_{W_2,\Gamma_2;\Lambda_{\bar{\omega}_2}}$
	is a sum of maps. By Theorem \ref{Thm-2}, Remark \ref{Rmrk-Completed-vs-usual-Novikov}, and Lemma \ref{lem-Zemke}, 
	this map is well-defined up to an overall factor. Similarly, $\underline{F}^-_{W_1\sqcup W,\Gamma_1\cup
	\Gamma;\Lambda_{\bar{\omega}_1\otimes\bar{\omega}}}$ and $\underline{F}^-_{W,\Gamma;\Lambda_{\bar{\omega}}}$ are well-defined maps. Let $(W',\Gamma')$ be the composition of the cobordism $W_2$ with $W_1\sqcup W$ and then a 0-handle attachment.  $W'$ is the product cobordism $Y\times I'$, where $I'$ is a closed connected interval, and $\Gamma'$ is a two component path. This gives the free stabilization $\underline{S}^+_{w_0}$ 	(see \cite[Remark 1.19]{phdthesis}) which induces an isomorphism by Corollary \ref{C-1}. More precisely, let $[\omega']\in H^2(W';\mathbb{R})$ be $PD(\eta\times I')$, then $\omega'|_{W_2}=\overline{\omega}_2$, $\omega'|_{W_1\amalg W}=\overline{\omega}_1\oplus\overline{\omega}$, and $\underline{F}^-_{W',\Gamma';\Lambda_{\omega'}}=\underline{S}^+_{w_0}$ (see Lemma \ref{Composition}). We have  
\begin{equation*}
	\begin{split}
		\underline{F}^-_{W_1\sqcup W,\Gamma_1\cup
		\Gamma;\Lambda_{\bar{\omega}_1\oplus\bar{\omega}}}=
		\underline{F}^-_{W_1,\Gamma_1;\Lambda_{\bar{\omega}_1}}\otimes_{\Lambda}\underline{F}^-_{W,\Gamma;\Lambda_{\bar{\omega}}}=\\
		\text{id}_{W_1}\otimes_{\Lambda}\underline{F}^-_{W,\Gamma;\Lambda_{\bar{\omega}}}.
	\end{split}
\end{equation*}
Therefore, $\underline{F}^-_{W,\Gamma;\Lambda_{\bar{\omega}}}$ is nonzero. 
\end{proof}

Let  $\mathcal{H}_2=(\Sigma\# S^2,\boldsymbol{\alpha}'\cup\{\alpha'\},\boldsymbol{\beta}'\cup\{\beta'\},\mathbf{w}_0\cup\{w_0,w_1\}) $ be the connected sum of $\mathcal{H}$ and $\mathcal{H}_0$ where the  basepoint $z$ is replaced with $w_0$ (Here $\boldsymbol{\alpha}'$, $\alpha'$, $\boldsymbol{\beta}'$, and $\beta'$ are small hamiltonian isotopies of $\boldsymbol{\alpha}$, $\alpha$, $\boldsymbol{\beta}$, and $\beta$.) (See Figure \ref{Connected sum change base}).  

\begin{figure}[h!]
	\def\svgwidth{18cm}
	\begin{center}
		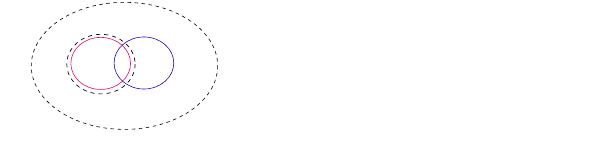
		\caption{This is Figure 14.1 of \cite{Z15}.}
		\label{Connected sum change base}
	\end{center}
\end{figure}

\begin{prop}\label{Prop-1}
	With the above notation, there are choices of almost complex structures $J_1$ and $J_2$ on $\mathcal{H}_1$ and $\mathcal{H}_2$, respectively, such that 
	\[
	\underline{\Psi}_{(\mathcal{H}_1,J_1)\rightarrow(\mathcal{H}_2,J_2),\mathfrak{s}}\doteq\begin{bmatrix}
		*&0\\
		(\underline{\Phi}_{\boldsymbol{\alpha}\rightarrow\boldsymbol{\alpha}'}^{\boldsymbol{\beta}'})_{U_{w_1}}^{U_z\rightarrow U_{w_0}}\circ(\sum U_{w_0}^iU_{w_1}^j(\underline{\partial}_{i+j+1})_{U_{w_0,w_1}})\circ (\underline{\Phi}_{\boldsymbol{\alpha}}^{\boldsymbol{\beta}\rightarrow\boldsymbol{\beta}'})_{U_{w_0}}^{U_z\rightarrow U_{w_1}}&*
	\end{bmatrix}.
	\]
\end{prop}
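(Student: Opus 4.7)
The plan is to establish this as the $\omega$-twisted analogue of Proposition 14.1 of \cite{Z15}, by going through Zemke's argument and tracking the $\omega$-weights at each step. The main structural observation, which already underlies the proof of Theorem \ref{Thm-1} above, is that the auxiliary sphere $S^2$ bounds a $3$-ball in $Y$, so any $2$-chain arising from a domain supported in the $S^2$ summand is a multiple of the fundamental class $[S^2]$, and $\omega$ evaluates to zero on it. Consequently, the $\omega$-weight of a contributing holomorphic curve in $\mathcal{H}_1$ or $\mathcal{H}_2$ depends only on the $\Sigma$-part of its domain, which lets the twisting factor cleanly through the maps appearing in the matrix entries.

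First I would choose $J_1$ and $J_2$ so that the connected-sum necks in $\mathcal{H}_1$ and $\mathcal{H}_2$ are very long, and realize the change of diagram $\mathcal{H}_1 \rightsquigarrow \mathcal{H}_2$ by a composition of small Hamiltonian isotopies taking $\boldsymbol{\beta}\cup\{\beta\}$ to $\boldsymbol{\beta}'\cup\{\beta'\}$ and then $\boldsymbol{\alpha}\cup\{\alpha\}$ to $\boldsymbol{\alpha}'\cup\{\alpha'\}$, so that the base point in the tube shifts from $w_1$ to $w_0$. Using the splitting $\underline{CF}^-(\mathcal{H}_i,\mathfrak{s};\Lambda_\omega)=C_{\theta^+}\oplus C_{\theta^-}$ from Theorem \ref{Thm-1}, I can write $\underline{\Psi}_{(\mathcal{H}_1,J_1)\rightarrow(\mathcal{H}_2,J_2),\mathfrak{s}}$ as a $2\times 2$ matrix. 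The $C_{\theta^+}\to C_{\theta^-}$ block vanishes for the same reason as in the untwisted case: a neck-stretched holomorphic triangle contributing to this block would need an $S^2$-side component with $\theta^+$ incoming and $\theta^-$ outgoing, and the grading constraint together with the basepoint multiplicities at $w_0$ and $w_1$ rules this out. The two diagonal entries come from triangles whose $S^2$-components are the constant triangle at $\theta^\pm$, and since the $S^2$-part contributes trivially to $\omega$, those diagonal entries are genuinely of the form dictated by Zemke's computation in the untwisted setting; they are not needed explicitly here and are left as $*$.

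The substantive part is the off-diagonal entry from $C_{\theta^-}$ to $C_{\theta^+}$. By composing the $\boldsymbol{\beta}$-change and then the $\boldsymbol{\alpha}$-change and applying neck-stretching, the $C_{\theta^-}\to C_{\theta^+}$ block factors as a $\boldsymbol{\beta}$-change map on the $\Sigma$ side, followed by a middle piece coming from holomorphic triangles in the long neck whose $S^2$-side is a bigon from $\theta^-$ to $\theta^+$, followed by an $\boldsymbol{\alpha}$-change map. The matched-pair analysis in \cite{Z15} identifies the middle piece, for each multiplicity $n_z$, with $U_{w_0}^iU_{w_1}^j$ times the corresponding higher differential $\underline{\partial}_{i+j+1}$ of $\underline{CF}^-(\mathcal{H})$, and the $U_z$ acting on $\mathcal{H}$-side disks is split across $U_{w_0}$ and $U_{w_1}$ according to which side of the bigon the $S^2$-multiplicity accumulates; the superscript notation $U_z\rightarrow U_{w_0}$ and $U_z\rightarrow U_{w_1}$ in the curve-change maps records this identification. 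Because each relevant disk or triangle in this neck-stretched analysis has its $\omega$-weight determined entirely by its $\Sigma$-side domain, one can check directly that the twisted curve-change maps $\underline{\Phi}$ and the twisted higher differentials $\underline{\partial}_{i+j+1}$ arise precisely with the same $t^{\omega([\cdot])}$ factors that they carry in $\underline{CF}^-(\mathcal{H};\Lambda_\omega)$, so the formula from the untwisted setting survives verbatim up to the overall unit $t^z$ recorded by $\doteq$.

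The main obstacle is the last step: verifying that the $\omega$-factor really does respect the factorization of moduli spaces under neck-stretching, so that the count of a matched pair of disks $(\phi_1\in\pi_2^\Sigma,\ \phi_2\in\pi_2^{S^2})$ is weighted by $t^{\omega([\phi_1])}$ and not by something involving cross-terms. This comes down to the fact that the $2$-chain $[\phi_1\#\phi_2]$ associated to a matched pair differs from $[\phi_1]+[\phi_2]$ only by a multiple of $[\Sigma\# S^2]$, and both $[S^2]$ and $[\Sigma\# S^2]$ bound $3$-chains in $Y$ and hence pair trivially with $\omega$. Once this bookkeeping is in place, the proposition follows from copying Zemke's computation entry by entry.
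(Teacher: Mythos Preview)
Your approach is essentially the same as the paper's: both port Zemke's proof of Theorem~14.1 in \cite{Z15} to the $\omega$-twisted setting, and both rest on the same key observation that any $2$-chain coming from the $S^2$ side bounds in $Y$ and hence pairs trivially with $\omega$, so all twisting factors through the $\Sigma$-side domain.

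Where the paper is more precise is in the structural decomposition. Rather than treating the ``middle piece'' as a count of holomorphic triangles, the paper introduces the intermediate diagram $\mathcal{H}_{1.5}$ (Figure~\ref{HD-halfway}) and factors
\[
\underline{\Psi}_{(\mathcal{H}_1,J_\alpha)\to(\mathcal{H}_2,J_\beta)}
=\underline{\Psi}_{(\mathcal{H}_{1.5},J_\beta)\to(\mathcal{H}_2,J_\beta)}\circ
\underline{\Psi}_{(\mathcal{H}_{1.5},J_\alpha)\to(\mathcal{H}_{1.5},J_\beta)}\circ
\underline{\Psi}_{(\mathcal{H}_1,J_\alpha)\to(\mathcal{H}_{1.5},J_\alpha)}.
\]
The outer factors are curve-change maps, computed via the twisted analogues of Zemke's Propositions~14.6 and~14.8; these give the diagonal matrices with $\underline{\Phi}$ entries. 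The crucial middle factor is a \emph{change of almost complex structure} on the fixed diagram $\mathcal{H}_{1.5}$, not a triangle count. Its off-diagonal entry is extracted by first computing the two differentials $\underline{\partial}_{\mathcal{H}_{1.5},J_\alpha}$ and $\underline{\partial}_{\mathcal{H}_{1.5},J_\beta}$ (Lemma~\ref{Lemma-1}) and then applying Zemke's algebraic Lemma~14.5, which reads off the continuation map from the two differentials. Your description of the middle piece as ``holomorphic triangles in the long neck'' is not how the argument actually runs; getting the factor $\sum U_{w_0}^iU_{w_1}^j(\underline{\partial}_{i+j+1})$ genuinely requires the $J_\alpha$-versus-$J_\beta$ comparison of Lemma~\ref{Lemma-1} together with Lemma~14.5. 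The supporting Lemma~\ref{Lem-1} (neck-length independence) is also used to justify these complex-structure choices. Your $\omega$-bookkeeping is correct, but you should route the middle step through this complex-structure change rather than through triangles.
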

Here  $\underline{\Phi}_{\boldsymbol{\alpha}}^{\boldsymbol{\beta}\rightarrow\boldsymbol{\beta}'}$ denotes the transition map from $\underline{CF}^-(\Sigma,\boldsymbol{\alpha},\boldsymbol{\beta},\mathbf{w}_0\cup\{z\},\mathfrak{s};\Lambda_\omega)$ to $\underline{CF}^-(\Sigma,\boldsymbol{\alpha},\boldsymbol{\beta}',\mathbf{w}_0\cup\{z\},\mathfrak{s};\Lambda_\omega)$. $\underline{\Phi}_{\boldsymbol{\alpha}\rightarrow\boldsymbol{\alpha}'}^{\boldsymbol{\beta}'}$ is defined similarly. If $G : C_1 \rightarrow C_2$
is a map of $\Lambda[U_z]$-modules, write $G^{U_z\rightarrow U_w}$ for the induced map
\[
G^{U_z\rightarrow U_w} := G \otimes id_{\Lambda[U_z,U_w]/(U_w-U_z)}.
\]
If $R$ is a ring of characteristic 2 and $G : C_1 \rightarrow C_2$ is a map of $R$-modules, write $(G)_{U_w}$ for the map
of $R \otimes _{\Lambda} \Lambda[U_w]$-modules
\[
G_{U_w} := G \otimes id_{\Lambda[U_w]}
: C_1 \otimes_{\Lambda} \Lambda[U_w] \rightarrow C_2 \otimes_ {\Lambda} \Lambda[U_w].
\]
$\underline{\Psi}_{(\mathcal{H}_1,J_1)\rightarrow(\mathcal{H}_2,J_2),\mathfrak{s}}$ is the transition map from the chain complex $\underline{CF}^-(\mathcal{H}_1,\mathfrak{s};\Lambda_\omega)$ to $\underline{CF}^-(\mathcal{H}_2,\mathfrak{s};\Lambda_\omega)$. Note that the generators in $\mathcal{H}_i$, $i=1,2$, can be written as $C_{\theta^+}\oplus C_{\theta^-}$ and in the matrix presentation for  $\underline{\Psi}_{(\mathcal{H}_1,J_1)\rightarrow(\mathcal{H}_2,J_2),\mathfrak{s}}$, 
the first row and column of this matrix correspond to $\theta^+$, and the second row and column to $\theta^-$. 
In general, if $V$ denotes the 2-dimensional vector space
$\langle\theta^+,\theta^-\rangle$ and $G$ is a homomorphism
\[
G : C_1 \otimes_\Lambda V \rightarrow C_2 \otimes_\Lambda V,
\]
then we will write $G$ as a $2\times 2$ block matrix as stated above.
Also we can write the differential on $\underline{CF}^-(\mathcal{H}_i,\mathfrak{s};\Lambda_\omega)$ as $\underline{\partial}_{\mathcal{H}_{i}}=\sum_{j=0}^{\infty}\underline{\partial}_jU_z^j$.

Proposition \ref{Prop-1} is the twisted version of Theorem 14.1 of \cite{Z15} and the proof easily modifies to the twisted case. We briefly restate the steps of the proof according to \cite{Z15}.

Let $c$ denote a curve along the connected sum neck of $\Sigma$ and $S^2$, $c_\alpha$ be a small isotopy of $\alpha$, and $c_\beta$ be a small isotopy of $\beta$ such that $c_\alpha\cap c=\emptyset$ and $c_\beta\cap c=\emptyset$ (see Figure \ref{Connected sum change base}). Let $\mathbf{T}=(T_1,T_2)$ and $J_\alpha(\mathbf{T})$ denote an almost complex structure which is stretched along $c$ and $c_\alpha$ with neck-lengths $T_1$ and $T_2$ respectively. Lemma 14.2 of \cite{Z15} proves that when the neck-lengths are large enough, the relative neck-lengths $T_1$ and $T_2$ do not matter. More precisely, the twisted version of this lemma is as follows.

Let $J_1$ and $J_2$ denote two almost complex structures on $\Sigma\times[0,1]\times\mathbb{R}$. There is an almost complex structure $J'$ on $\Sigma\times[0,1]\times\mathbb{R}$ such that it agrees with $J_1$ on $\Sigma\times[0,1]\times(-\infty,-1]$ and with $J_2$ on $\Sigma\times[0,1]\times[1,+\infty)$ and a transition map $\underline{\Psi}_{(\mathcal{H},J_1)\rightarrow(\mathcal{H},J_2),\mathfrak{s}}$ is defined by counting $J'$-holomorphic disks $\phi$ of zero Maslov index in $\Sigma\times[0,1]\times\mathbb{R}$, weighted by $t^{\omega([\phi])}$, where $[\phi]$ is the associated 2-chain to $\phi$ obtained by coning off the $\alpha$- and the $\beta$- boundaries of $D(\phi)$. We say that $J'$ interpolates between $J_1$ and $J_2$.

\begin{lemma}\label{Lem-1}
	Let $\tilde{\mathcal{H}}$ be one of $\mathcal{H}_i$, $i=1,2$, or $\mathcal{H}_{1.5}$ (see Figure \ref{HD-halfway}).
	There is a constant
	$N$ such that if $\mathbf{T}$ and $\mathbf{T}'$ are two pairs of neck lengths, all of whose components are greater than
	$N$, then there is a non-cylindrical almost complex structure $J'$ interpolating $J_{\alpha}(\mathbf{T})$ and $J_\alpha(\mathbf{T}')$, respectively, $J_{\beta}(\mathbf{T})$ and $J_\beta(\mathbf{T}')$,
	satisfying
	\[
	\begin{matrix}
		\underline{\Psi}_{J_\alpha(\mathbf{T})\rightarrow J_\alpha(\mathbf{T'}),\mathfrak{s}}=\underline{\Psi}_{J',\mathfrak{s}}=\begin{pmatrix}
			id & 0\\
			0& id
		\end{pmatrix},\\
	\underline{\Psi}_{J_\beta(\mathbf{T})\rightarrow J_\beta(\mathbf{T'}),\mathfrak{s}}=\underline{\Psi}_{J',\mathfrak{s}}=\begin{pmatrix}
		id & 0\\
		0& id
	\end{pmatrix}.
	\end{matrix}
	\]
\end{lemma}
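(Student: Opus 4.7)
The plan is to follow the strategy of the untwisted analogue, Lemma 14.2 of \cite{Z15}, and verify that the Novikov weight attached to every holomorphic disk contributing to the transition map is trivially $t^{0}=1$. First I would set up the interpolating almost complex structure $J'$ to itself have very long necks along both $c$ and $c_\alpha$ (respectively $c$ and $c_\beta$) throughout the interpolation, so that SFT-type compactness applies uniformly. As the neck lengths in $\mathbf{T}$ and $\mathbf{T}'$ grow, any $J'$-holomorphic Maslov index $0$ disk degenerates into a matched building with pieces on the $\Sigma$-side and the $S^{2}$-side of each neck. The Maslov index and expected dimension counts force every diagonal contribution $\mathbf{x}\times\theta^{\pm}\to\mathbf{x}\times\theta^{\pm}$ to come from a constant curve on $\Sigma$ paired with the trivial $S^{2}$-side configuration (which counts to $1$), and force every off-diagonal contribution $\mathbf{x}\times\theta^{\pm}\to\mathbf{y}\times\theta^{\mp}$ to vanish; this is the content of Zemke's proof.

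Next I would handle the Novikov weights. The surviving diagonal contributions have either empty domain on $\Sigma$ or a domain entirely supported in the $S^{2}$-bubble and the small isotopy region attached to the $c_\alpha$-neck. After coning off the $\alpha$- and $\beta$-boundaries with gradient trajectories, the associated $2$-chain $[\phi]$ is homologous to a sum of copies of the Heegaard sphere $S^{2}$ (together with the cone on a small null-homotopic isotopy loop), and this bounds a $3$-ball in $Y$. Therefore $\omega([\phi])=0$, and the contribution is genuinely $1$ rather than $t^{z}$ for some $z\neq 0$. This is exactly the vanishing argument used in the proof of Theorem \ref{Thm-1} above, where the $S^{2}$-supported disks are shown to carry trivial $\omega$-weight. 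Summing over matched pairs gives the identity matrix, not merely a projective identity.

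The hard part will be controlling ``mixed'' degenerations in which the $\Sigma$-side component is non-constant but matches a non-constant $S^{2}$-side piece through the $c_\alpha$ neck. To rule these out I would combine Zemke's degeneration/matching arguments at $c$ with a separate application of compactness at $c_\alpha$, using the fact that over a small isotopy neck the only non-trivial rigid matched pieces have domain in a product neighborhood of $c_\alpha$. Any such piece is supported in a $3$-ball inside $Y$, so even if it were to contribute, its $\omega$-weight would still be $0$. Once these exceptional configurations are excluded and the Novikov-weight computation above is in hand, the matrix equality stated in Lemma \ref{Lem-1} follows directly from the untwisted result, for both the $J_\alpha(\mathbf{T})\to J_\alpha(\mathbf{T}')$ case and the $J_\beta(\mathbf{T})\to J_\beta(\mathbf{T}')$ case, and also for the intermediate diagram $\mathcal{H}_{1.5}$.
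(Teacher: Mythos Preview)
Your proposal is correct and follows essentially the same route as the paper: both reduce to Zemke's Lemma 14.2 via neck-stretching and compactness, then check that every surviving index-zero disk has constant $\Sigma$-component (so $\omega([\phi])=0$) while the off-diagonal entries vanish exactly as in the untwisted case. The paper is slightly more direct in that the index formula plus the existence of a broken representative already forces both $\phi$ and $\phi_0$ to be constant for the diagonal entries, so your separate discussion of ``mixed'' degenerations is not strictly needed, but the conclusion is the same.
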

\begin{proof}
	The proof is almost the same as the proof of the Lemma 14.2 of \cite{Z15}. We consider the case $\tilde{\mathcal{H}}=\mathcal{H}_1$. The general plan is to take two sequences $\mathbf{T}_{i}=(T_{1,i},T_{2,i})$, $\mathbf{T}'_{i}=(T'_{1,i},T'_{2,i})$ of pairs of neck-lengths. Let $J'_i$ denote the almost complex structure which interpolates $J_{\alpha}(\mathbf{T}_{i})$ and $J_\alpha(\mathbf{T}_i')$ and is non-cylindrical only in a neighborhood of $c$ and $c_\alpha$. $\underline{\Psi}_{J'_i,\mathfrak{s}}$ counts the number of $J'_i$-holomorphic disks with Maslov index zero and has a matrix presentation as 
	\[
	\begin{pmatrix}
		A_i&B_i\\
		C_i&D_i
	\end{pmatrix}.
	\]

	Let $\phi\#\phi_0\in\pi_2(\mathbf{x}\times x,\mathbf{y}\times y)$. By the index formula (Formula 14.5 of \cite{Z15}), 
	\[
	\mu(\phi\#\phi_0)=\mu(\phi)+\text{gr}(x,y)+2 m_2(\phi_0),
	\]
	$A_i$ and $D_i$ correspond to generators  with $\text{gr}(x,y)=0$, $B_i$ correspond to generators with $\text{gr}(x,y)=-1$, and $C_i$ correspond to generators with $\text{gr}(x,y)=1$. Here $m_2(\phi_0)$ denotes the coefficient of the disk $\phi_0$ in the region containing $w_0$ (see Figure \ref{Connected sum change base} on the left). As the two pairs of neck-lengths approach infinity, for each sequence of $J'_i$-holomorphic disks $u_i$, we can find sub-sequences of disks  $u_i^l$, $u_i^m$, and $u_i^r$ which approach into broken holomorphic curves in  $(S^2\setminus\{p_0\})\times[0,1]\times\mathbb{R}$, in $S^1\times\mathbb{R}\times[0,1]\times\mathbb{R}$ ($S^1\times[0,1]$ is the cylinder with boundary components $c_\alpha$ and $c$), and in  $\Sigma\setminus\{p\}\times[0,1]\times\mathbb{R}$,  where $p_0$
	and $p$ denote the connected sum points corresponding to the circles $c_\alpha$ and $c$, respectively. Consequently, $\phi$ and $\phi_0$ admit broken homomorphic representatives on $(\Sigma,\boldsymbol{\alpha},\boldsymbol{\beta})$ and $(S^2,\alpha,\beta^l)$ where the curve $\beta^l$ is the result of cutting $\beta$ along its
	intersection with $c_\alpha$, and then collapsing the ends to a point.  
	The index formula and the fact that $\phi$ has a broken holomorphic representative imply that for the disks which contribute to $A_i$ and $D_i$, $\phi$ and $\phi_0$ are constant and therefore $\phi\#\phi_0$ has a unique $\tilde{J}_i$-holomorphic representative and $\omega([\phi])=\omega([\phi_0])=0$ which means that $A_i=D_i=1$. 
	The same argument as in the proof of Lemma 14.2 of \cite{Z15} shows that $B_i=C_i=0$.
\end{proof}

Let $\mathcal{H}_{1.5}$ denote the Heegaard diagram in Figure \ref{HD-halfway}. The following lemma is similar to Theorem \ref{Thm-1} but with different basepoints and different choices of almost complex structures. 

\begin{figure}[h!]
	\def\svgwidth{10cm}
	\begin{center}
		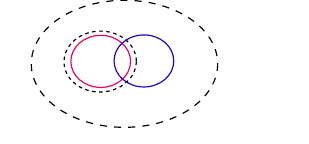
		\caption{This is Figure 14.2 of \cite{Z15}.}
		\label{HD-halfway}
	\end{center}
\end{figure}

\begin{lemma}[\cite{Z15}, Lemma 14.3]\label{Lemma-1}
	Let $J_\alpha$ denote an almost complex structure on the Heegaard diagram $\mathcal{H}_{1.5}$ which is stretched along $c$ and
	$c_\alpha$ (see Figure \ref{HD-halfway}). For sufficiently large neck lengths along $c$ and $c_\alpha$, we have
	\[
	\underline{\partial}_{\mathcal{H}_{1.5},J_\alpha,\mathfrak{s}}=\begin{pmatrix}
		(\underline{\partial}_{\mathcal{H}_0})^{U_z\rightarrow U_{w_1}}& t^w(U_{w_1}+U_{w_0})\\
		0& (\underline{\partial}_{\mathcal{H}_0})^{U_z\rightarrow U_{w_1}}
	\end{pmatrix},
	\]
for some $w\in\mathbb{R}$. If $J_\beta$ denotes an analogous almost complex structure stretched sufficiently along $c$ and $c_\beta$, then
	\[
\underline{\partial}_{\mathcal{H}_{1.5},J_\beta,\mathfrak{s}}=\begin{pmatrix}
	(\underline{\partial}_{\mathcal{H}_0})^{U_z\rightarrow U_{w_0}}& t^w(U_{w_1}+U_{w_0})\\
	0& (\underline{\partial}_{\mathcal{H}_0})^{U_z\rightarrow U_{w_0}}
\end{pmatrix},
\]
for some $w\in\mathbb{R}$.
\end{lemma}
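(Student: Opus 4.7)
The plan is to adapt the proof of Zemke's untwisted Lemma~14.3 of \cite{Z15} to the $\omega$-twisted setting. The key new ingredient is to track the multiplicative $\omega$-weight $t^{\omega([\phi])}$ carried by each Maslov-index-$1$ holomorphic disk $\phi$ in $\mathcal{H}_{1.5}$ and to verify that this weight collapses nicely in each block of the stated matrix. I would treat the $J_\alpha$ case in detail and then remark that the $J_\beta$ case is completely symmetric.

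For the $J_\alpha$ case, the first step is the degeneration analysis: stretching along $c$ and $c_\alpha$, and using Lemma~\ref{Lem-1} to trivialize the dependence on relative neck lengths, any $J_\alpha$-holomorphic $\phi\in\pi_2(\mathbf{x}\times x,\mathbf{y}\times y)$ of Maslov index $1$ breaks, as the neck lengths tend to infinity, into three pieces living in $(\Sigma\setminus\{p\})\times[0,1]\times\mathbb{R}$, in the cylindrical region between $c$ and $c_\alpha$, and in $(S^2\setminus\{p_0\})\times[0,1]\times\mathbb{R}$, where $p$ and $p_0$ denote the two connected-sum points. Combining the index formula $\mu(\phi_1\#\phi_2)=\mu(\phi_1)+\mathrm{gr}(x,y)+2m_2(\phi_2)$ with the matching conditions across the two necks, the same case analysis as in \cite[Lemma~14.3]{Z15} leaves exactly three types of contributions: the diagonal $x=y$ case, where the $\Sigma$-piece is a disk $\phi_1\in\pi_2(\mathbf{x},\mathbf{y})$ in $\mathcal{H}$, the $S^2$-piece is a constant configuration at $\theta^\pm$, and the neck-matching identifies $n_z(\phi_1)$ with $n_{w_1}(\phi)$; the $(\theta^-,\theta^+)$ off-diagonal, coming from the two bigons in $S^2$ from $\theta^-$ to $\theta^+$, one containing $w_0$ and one containing $w_1$; and the $(\theta^+,\theta^-)$ off-diagonal, which vanishes by exactly the index obstruction used in the untwisted proof.

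Next I would track the $\omega$-weights. For the diagonal contributions the $S^2$-piece is constant, so the coned 2-chain $[\phi]$ in $Y$ coincides with $[\phi_1]$ up to a null-homologous piece, giving $\omega([\phi])=\omega([\phi_1])$; this sector therefore contributes exactly $(\underline{\partial}_\mathcal{H})^{U_z\to U_{w_1}}$, with the original $\omega$-weights of $\mathcal{H}$ intact. For the two off-diagonal bigons, the coned 2-chain $[\phi]$ is supported in the stabilization $S^2\subset Y$, and this $S^2$ bounds a 3-ball in $Y$, so $[\phi]$ is null-homologous and $\omega([\phi])$ depends only on the chosen 2-cocycle representative $\omega\in[\omega]$. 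In particular both bigons receive the same real weight $w$, producing the common prefactor $t^w$ in front of $U_{w_0}+U_{w_1}$ as claimed.

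The $J_\beta$ statement follows by the same argument with $c_\alpha$ replaced by $c_\beta$: the matching now takes place on the $\boldsymbol{\beta}'$-side of the neck and identifies $n_z(\phi_1)$ with $n_{w_0}(\phi)$, giving $(\underline{\partial}_\mathcal{H})^{U_z\to U_{w_0}}$ on the diagonal, while the off-diagonal analysis is unchanged. The main obstacle is the neck-stretching step itself, which controls both the diagonal matching and the rigidity of the off-diagonal bigons, but this is already carried out in \cite{Z15} and the twisted version is handled by Lemma~\ref{Lem-1}; the only genuinely new input is the homological observation that $\omega$ vanishes on any 2-cycle supported in the stabilization sphere, which is immediate since that sphere bounds a ball in $Y$.
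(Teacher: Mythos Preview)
Your approach is the same as the paper's---adapt \cite[Lemma~14.3]{Z15} and track the $\omega$-weights block by block---and your treatment of the diagonal entries and of the $B$ (bigon) entry is essentially correct. One small inaccuracy: for the diagonal entries the $S^2$-piece is \emph{not} a constant configuration in general; what the $c_\alpha$-degeneration in \cite{Z15} actually shows is the multiplicity constraint $n_1(\phi_0)=0$, $n_2(\phi_0)=m_1(\phi_0)$, which is why the $U$-variable is $U_{w_1}$ rather than $U_{w_0}$. Your $\omega$-weight conclusion (that $\omega([\phi])=\omega([\phi_1])$ because the $S^2$-part is null-homologous) is nonetheless valid, so this is only a phrasing issue.

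The genuine gap is in the $C$ entry, the $\theta^+\!\to\!\theta^-$ direction. You assert that it ``vanishes by exactly the index obstruction used in the untwisted proof,'' but this is not how $C=0$ is proved in \cite{Z15}. Index considerations do \emph{not} rule out contributing disks: there are two families, one with $\phi$ constant and $\phi_0$ a bigon with $m_2(\phi_0)=1$, and one with $\mu(\phi')=2$ and $m_2(\phi'_0)=0$. In the untwisted setting these cancel in pairs. In the twisted setting you must further check that each cancelling pair carries the same $\omega$-weight, i.e.\ $\omega([\phi'\#\phi'_0])=\omega([\psi])$, before concluding $C=0$. This is exactly the step the paper singles out, and it requires identifying the coned $2$-chains of the paired disks up to a null-homologous difference. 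Without this verification your argument is incomplete.
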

\begin{proof}
	The proof of Lemma 14.3 of \cite{Z15} works here with a slight modification and we briefly restate it. By Lemma \ref{Lem-1}, the relative neck-lengths of $c$ and $c_\alpha$ does not affect the computation when the neck-lengths are sufficiently large. Let 
	\[
		\underline{\partial}_{\mathcal{H}_{1.5},J_\alpha,\mathfrak{s}}=\begin{pmatrix}
		A&B\\
		C&D
	\end{pmatrix}.
	\]
\begin{figure}[h!]
	\def\svgwidth{15cm}
	\begin{center}
		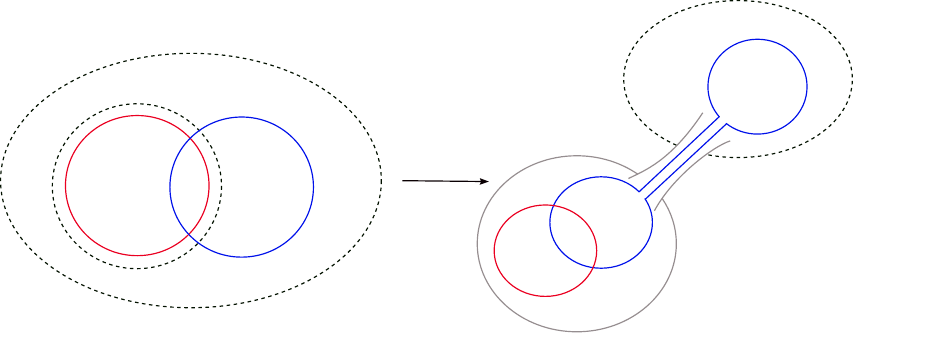
		\caption{Figure 14.5 of \cite{Z15}.}
		\label{Fig-6}
		
	\end{center}
\end{figure}

	For a homotopy class of a disk $\phi\#\phi_0\in\pi_2(\mathbf{x}\times x,\mathbf{y}\times y)$, we have the index formula
	\[
	\mu(\phi\#\phi_0)=\mu(\phi)-\text{gr}(x,y)+2m_2(\phi_0).
	\]
	Let $m_1$, $m_2$, $n_1$ and $n_2$ denote the multiplicities of the regions of $(S^2,\alpha,\beta',\{w_0,w_1\})$ shown in Figure \ref{Fig-6}. Classes with $\text{gr}(x,y)=1$ contribute to $C$, classes with $\text{gr}(x,y)=0$ contribute to $A$ and $D$, and classes with $\text{gr}(x,y)=-1$ contribute to $B$. By the index formula and stretching the neck along $c$, one can check that the only classes with $\text{gr}(x,y)=-1$ are the classes with domains one of the two bigons $D_i$, $i=1,2$, containing the  basepoints $w_0$ and $w_1$. Let $\phi_i$, $i=1,2$, denote the classes with $D(\phi_i)=D_i$. As  was argued in the proof of Theorem \ref{Thm-1}, $\omega([\phi_1])=\omega([\phi_2])$. Therefore, $B\doteq U_{w_0}+U_{w_1}$.
	
	To compute $A$ and $D$, note that stretching the neck around $c$ and then using the above index formula implies that $\mu(\phi)=1$ and $m_2(\phi_0)=0$. For a fixed neck-length around $c_\alpha$, the argument in the proof of Theorem \ref{Thm-1} computes the functions $A$ and $D$ without the consideration about the  basepoints. This is proved similarly to the untwisted case. In fact, as the neck around $c_\alpha$ approaches infinity, a two punctured sphere with one $\beta$ curve and no $\alpha$ curves degenerates (see Figure \ref{Fig-6}). By considering the contributions of the limiting curves on this degenerate diagram, one can prove that $m_1(\phi_0)\leq n_2(\phi_0)$. This observation and the index formula result in $n_1(\phi_0)=0$ and $n_2(\phi_0)=m_1(\phi_0)$. This implies that
	any curve which makes non-trivial contribution to $A$ or $D$ is counted with a factor of $U^{m_1(\phi)}_{w_1}$ and
	no factor of $U_{w_0}$. This completes the computation of $A$ and $D$. 
	
	For a sufficiently large neck-length around $c$ and using the index formula, one can check that the only possible disks that contribute to $C$ are either a disk $\psi:=\phi\#\phi_0$ where $\phi$ is a constant class and $\phi_0$ is a disk where domain is a bigon with $m_2(\phi_0)=1$ or a disk $\phi'\#\phi'_0$ with $\mu(\phi')=2$ and $m_2(\phi'_0)=0$.
	For each neck-length $T_i(c_\alpha)$ along $c_\alpha$, there is a point $d_i(\phi_0')\in[0,1]\times \mathbb{R}$ associated with the bigon $\phi_0'$ in $\mathcal{H}_0$ such that
	as $T_i(c_\alpha)$ approaches infinity, $d_i(\phi_0')$ approaches $\{0\}\times\mathbb{R}$. Therefore, using the maximum modulus principal, the domain of each limit curve $u$ is a component of $\Sigma\setminus\boldsymbol{\beta}'$. As $c$ stretches, one can use transversality results to show that $u$ is a representative for $\phi'$. 
    and we have $\omega([\phi'\#\phi'_0])=\omega([\psi])$. One can also check that $\#\widehat{\mathcal{M}}(\phi'\#\phi'_0)=\#\widehat{\mathcal{M}}(\psi)$. This proves that $C=0$.
\end{proof}

\begin{proof}[Proof of Proposition \ref{Prop-1}] Let $J_1$ and $J_2$ be the almost complex structures $J_\alpha$ and $J_\beta$  in Lemma \ref{Lemma-1}. One can use the computation for the differential in Lemma \ref{Lemma-1} above and Lemma 14.5 of \cite{Z15} to show that the transition map $\underline{\Psi}_{(\mathcal{H}_{1.5},J_\alpha)\rightarrow (\mathcal{H}_{1.5},J_\beta),\mathfrak{s}}$ is given by
	\[
	\begin{pmatrix}
		id&0\\
		*&id
	\end{pmatrix},
	\]
where $*\doteq\sum_{i,j\geq0}U^i_{w_0}U^j_{w_1}(\underline{\partial}_{i+j+1})_{U_{w_0},U_{w_1}}$. One can decompose $\underline{\Psi}_{(\mathcal{H}_{1},J_\alpha)\rightarrow (\mathcal{H}_{2},J_\beta),\mathfrak{s}}$ as
\[
\underline{\Psi}_{(\mathcal{H}_{1},J_\alpha)\rightarrow (\mathcal{H}_{2},J_\beta),\mathfrak{s}}=\underline{\Psi}_{(\mathcal{H}_{1.5},J_\beta)\rightarrow (\mathcal{H}_{2},J_\beta),\mathfrak{s}}\circ \underline{\Psi}_{(\mathcal{H}_{1.5},J_\alpha)\rightarrow (\mathcal{H}_{1.5},J_\beta),\mathfrak{s}}\circ \underline{\Psi}_{(\mathcal{H}_{1},J_\alpha)\rightarrow (\mathcal{H}_{1.5},J_\alpha),\mathfrak{s}}
\]
The twisted versions of Proposition 14.6 and Proposition 14.8 of \cite{Z15} hold (with the same proof) and one can use them to compute $\underline{\Psi}_{(\mathcal{H}_{1},J_\alpha)\rightarrow (\mathcal{H}_{1.5},J_\alpha),\mathfrak{s}}$ and  $\underline{\Psi}_{(\mathcal{H}_{1.5},J_\beta)\rightarrow (\mathcal{H}_{2},J_\beta),\mathfrak{s}}$. In fact, in matrix notation, $\underline{\Psi}_{(\mathcal{H}_{1},J_\alpha)\rightarrow (\mathcal{H}_{2},J_\beta)}$ equals
\begin{equation*}
	\begin{split}
&\begin{pmatrix}
	(\underline{\Phi}_{\boldsymbol{\alpha}\rightarrow\boldsymbol{\alpha}'}^{\boldsymbol{\beta}'})_{U_{w_1}}^{U_z\rightarrow U_{w_0}}&0\\
	0&(\underline{\Phi}_{\boldsymbol{\alpha}\rightarrow\boldsymbol{\alpha}'}^{\boldsymbol{\beta}'})_{U_{w_1}}^{U_z\rightarrow U_{w_0}}
\end{pmatrix}\begin{pmatrix}
	id&0\\
	*&id
\end{pmatrix}\begin{pmatrix}
(\underline{\Phi}_{\boldsymbol{\alpha}}^{\boldsymbol{\beta}\rightarrow\boldsymbol{\beta}'})_{U_{w_0}}^{U_z\rightarrow U_{w_1}}&0\\
0&(\underline{\Phi}_{\boldsymbol{\alpha}}^{\boldsymbol{\beta}\rightarrow\boldsymbol{\beta}'})_{U_{w_0}}^{U_z\rightarrow U_{w_1}}
\end{pmatrix}.
\end{split}
\end{equation*}This completes the proof. See \cite[Section 14]{Z15} for more details.
	
\end{proof}

At this step, we can prove the twisted version of Lemma A.1.35 of \cite{phdthesis}.

\begin{figure}[h!]
	\def\svgwidth{17cm}
	\begin{center}
		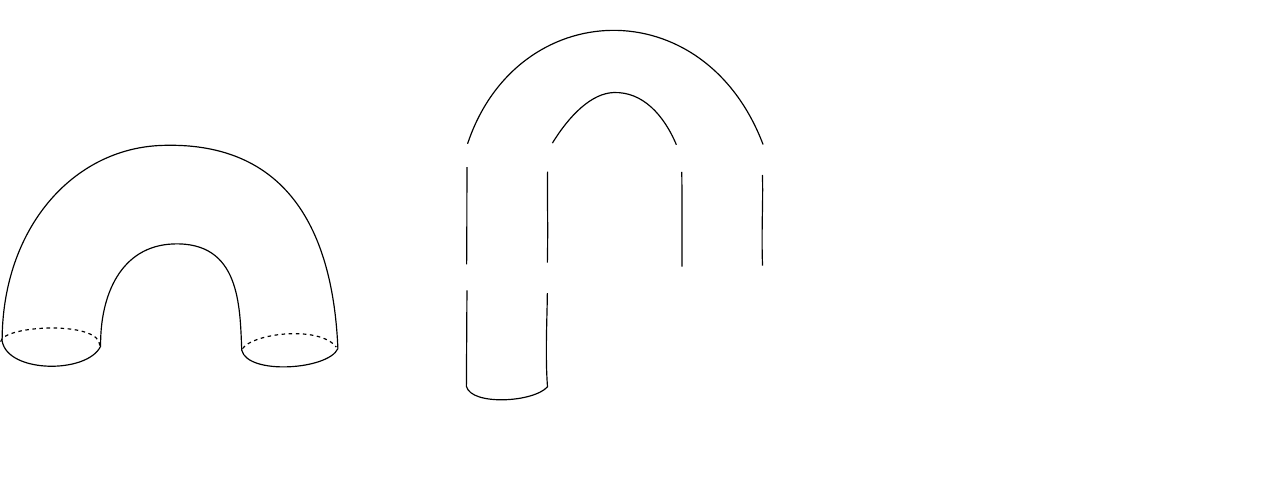
		\caption{This is Figure $A.5$ in \cite{phdthesis}, related to Lemma \ref{Main-Lem}}
		\label{Fig-12}
	\end{center}
\end{figure}

\begin{lemma}\label{Main-Lem}
	Let $Y=\mathcal{F}\times S^1$, $[\omega]=d\cdot\textnormal{PD}([\eta])\in H^2(Y;\mathbb{R})$ where $\mathcal{F}=S^1\times S^1$ $\eta=\{p\}\times S^1$, $p\in \mathcal{F}$, $d\neq0$. Let $W_1=Y\times I$ be a cobordism from $\emptyset$ to $Y\amalg(-Y)$ and $[\overline{\omega}_1]=d\cdot\textnormal{PD}([\eta\times I])\in H^2(W;\mathbb{R})$. Let $w_0\in Y$, $w_1\in -Y$, $w'_0,w'_1\in W_1$ and $\Gamma_1\subset W_1$ denote a two component path that connects $w_i$ and $w'_i$, $i=0,1$, as shown in the left of Figure \ref{Fig-12}. Let $(W_2,\Gamma_2)$ denote a graph cobordism, with $W_2\cong (\mathcal{F}\times D^2)\amalg(-\mathcal{F}\times D^2)$, from $\emptyset$ to  $Y\amalg(-Y)$, and $\Gamma_2$ denote a two component path such that the components connect two basepoints in $Y$ and  $-Y$ to the basepoints in $W_2$ as shown in the right of Figure \ref{Fig-12}, and $[\overline{\omega}_2]\in H^2(W_2;\mathbb{R})$ be such that whose restriction to each boundary component is  $ d\cdot\textnormal{PD}([\eta])$. Then 
	\[
	\underline{f}^-_{W_1,\Gamma_1;\Lambda_{\overline{\omega}_1}}\ \dot{\simeq}\ \underline{f}^-_{W_2,\Gamma_2;\Lambda_{\overline{\omega}_2}}.
	\]
\end{lemma}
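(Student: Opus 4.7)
The plan is to reduce both cobordism maps to nonzero morphisms between rank-one $\Lambda$-modules, and then conclude that they must agree up to an overall factor $t^z$. The rank-one identification on the target comes from Theorem \ref{Thm-2} together with the completion discussion in Subsection \ref{SumsCom}, which gives $\underline{HF}^-(Y;\Lambda_\omega)\cong\Lambda$ (and likewise for $-Y$), so that by the tensor-product definition of $\underline{CF}^-$ on disjoint unions recalled in Section \ref{Section 1}, $\underline{HF}^-(Y\amalg(-Y);\Lambda_{\omega_1\oplus\omega_2})\cong\Lambda$.

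First I would analyze $W_2$ directly. Since $W_2=(\mathcal{F}\times D^2)\amalg(-\mathcal{F}\times D^2)$ is disconnected and $\Gamma_2$ consists of two disjoint paths, one in each component of $W_2$, the induced twisted cobordism map factors as a tensor product of the two individual filling maps for $\mathcal{F}\times D^2$ and $-\mathcal{F}\times D^2$. By Lemma \ref{Lem-2}, each factor is well-defined up to an overall $t^z$ and is nonzero, so $\underline{f}^-_{W_2,\Gamma_2;\Lambda_{\overline{\omega}_2}}$ is a nonzero morphism of rank-one $\Lambda$-modules, hence an isomorphism up to an overall unit.

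Next I would decompose $W_1$ as suggested by the middle picture of Figure \ref{Fig-12}, writing $W_1=W_1^3\circ W_1^2\circ W_1^1$, where $W_1^1\colon\emptyset\to Y_1$ is a copy of $\mathcal{F}\times D^2$ filling one torus boundary, $W_1^2\colon Y_1\to Y_1\amalg Y_2$ creates a second component via a 0-handle together with suitable 1- and 2-handles, and $W_1^3\colon Y_1\amalg Y_2\to Y_1\amalg Y_3$ modifies the second component so that $Y_3\cong -Y$. Using Lemma \ref{Composition} and Remark \ref{Rmrk-Completed-vs-usual-Novikov}, I would express $\underline{f}^-_{W_1,\Gamma_1;\Lambda_{\overline{\omega}_1}}$ as a composition of maps between rank-one $\Lambda$-modules (the twisted Heegaard Floer groups of the intermediate 3-manifolds, each a union of copies of $Y$ possibly connect-summed with $S^3$'s). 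Applying Lemma \ref{Lem-2} to $W_1^1$ and to the combined cobordism $W_1^3\circ W_1^2$, which topologically corresponds to filling the second torus, each component is nonzero, and hence so is the whole composition.

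The main technical obstacle will be organizing the decomposition of $W_1$ as a sequence of restricted graph cobordisms with well-defined $\omega$-twisted maps, and checking that the restrictions of $[\overline{\omega}_1]$ to each intermediate 3-manifold satisfy the nonvanishing hypothesis required by Theorem \ref{Thm-2} and Lemma \ref{Lem-2}; in particular one must verify that the fiber class is never killed by passing to an intermediate level, and that the graph $\Gamma_1$ decomposes compatibly so that the composition law can be applied as stated. Once this is in place, both $\underline{f}^-_{W_1,\Gamma_1;\Lambda_{\overline{\omega}_1}}$ and $\underline{f}^-_{W_2,\Gamma_2;\Lambda_{\overline{\omega}_2}}$ are nonzero morphisms between the same rank-one $\Lambda$-modules, each well-defined up to an overall unit by Lemma \ref{lem-Zemke}, and therefore coincide up to an overall factor $t^z$.
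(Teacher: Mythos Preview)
Your overall strategy---reduce to showing both maps are nonzero morphisms between rank-one $\Lambda$-modules---is sound in principle, and your treatment of $W_2$ via Lemma~\ref{Lem-2} matches the paper. The gap is entirely in your treatment of $W_1$.

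Your proposed decomposition of $W_1$ is topologically impossible. The cobordism $W_1=Y\times I=\mathcal{F}\times S^1\times I$ contains no embedded $\mathcal{F}\times D^2$ whose boundary is an interior copy of $Y$ separating off $\emptyset$: that would require an embedded disk in the annulus $S^1\times I$ bounding a core circle, which does not exist. You have also misread Figure~\ref{Fig-12}: in the paper's decomposition each intermediate level is a copy of $Y\amalg(-Y)$ (with varying basepoint sets), and $(W_1^1,\Gamma_1^1)$ is the full bent product $Y\times I$ equipped with a \emph{single} arc joining the two boundary basepoints---i.e.\ the twisted cotrace---not a filling by $\mathcal{F}\times D^2$.

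The paper's argument is therefore of a different nature. The cotrace sends $1\mapsto x\otimes x^\vee+y\otimes y^\vee$; the remaining pieces $W_1^2$ and $W_1^3$ are product cobordisms realizing the free stabilizations $S^+$ and $S^-$ that convert the single-arc cotrace graph into the two-component graph $\Gamma_1$. The delicate point---and the reason Proposition~\ref{Prop-1} is needed---is that composing these na\"ively gives zero: $S^+$ places the cotrace output in the $\theta^+$ summand, while $S^-$ annihilates $\theta^+$. What rescues the computation is the transition map $\underline{\Psi}_{(\mathcal{H}_1,J_1)\to(\mathcal{H}_2,J_2)}$ between the Heegaard diagrams used for $S^+$ and $S^-$; its off-diagonal entry, computed in Proposition~\ref{Prop-1}, carries the $\theta^+$ term to a nonzero $\theta^-$ term, yielding exactly $y\otimes x^\vee$. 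Thus proving that $\underline{f}^-_{W_1,\Gamma_1;\Lambda_{\overline{\omega}_1}}$ is nonzero is precisely the nontrivial content of the lemma, and it cannot be extracted from Lemma~\ref{Lem-2} and formal composition laws alone.
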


\begin{proof}
	The proof is similar to the proof of Lemma A.1.35 of \cite{phdthesis}. We only need to modify it to the twisted case.
	First note that the two cobordisms $W_i$, $i=1,2$, induce maps, which are a sum of maps, from $\underline{CF}^-(\emptyset)$ to $\underline{CF}^-(Y\amalg-Y,\{w_1,w_0\};\Lambda_\omega)$ (see Subsection \ref{SumsCom}).  
	From
	\[
	\underline{CF}^-(Y\amalg-Y,\{w_1,w_0\};\Lambda_{\omega\oplus\omega})=\underline{CF}^-(Y,w_1;\Lambda_\omega)\otimes_{\Lambda}\underline{CF}^-(-Y,w_0;\Lambda_\omega)
	\]
	and
	\[
	\underline{HF}^-(Y,w_1;\Lambda_\omega)\simeq
	\bigoplus_{\mathfrak{s}\in Spin^c(Y)}\underline{HF}^-(Y,w_1,\mathfrak{s};\Lambda_\omega),
	\]
	and the fact that only one of the above summands is nonzero (by Theorem \ref{Thm-2}), the induced maps $\underline{F}^-_{W_i,\Gamma_1;\Lambda_{\overline{\omega}_i}}$, $i=1,2$, are well-defined by Lemma \ref{lem-Zemke} and Remark \ref{Rmrk-Completed-vs-usual-Novikov}. Let $\mathcal{R}=\Lambda\llbracket U_1,U_2\rrbracket$. We have  $\underline{CF}^-(\emptyset)=\mathcal{R}$ (see Remark A.1.6 of \cite{phdthesis}). $\underline{CF}^-(-Y,w_1;\Lambda_\omega)$  is chain isomorphic to $\text{Hom}_{\mathcal{R}}(\underline{CF}^-(Y,w_1;\Lambda_\omega),\mathcal{R})$ (\cite[Subsection 12.2]{Z21}). Therefore, there is the following canonical isomorphism
	\[
	\underline{CF}^-(-Y,w_1;\Lambda_\omega)\cong\underline{CF}^-(Y,w_1;\Lambda_\omega)^\vee:=\text{Hom}_{\mathcal{R}}(\underline{CF}^-(Y,w_1;\Lambda_\omega),\mathcal{R}).
	\]
	By Theorem \ref{Thm-2}, $\underline{CF}^-(Y,w_1;\Lambda_\omega)$ is chain homotopy equivalent to the chain complex
	\begin{equation}\label{eq-3}
		0\rightarrow\Lambda\llbracket U_1\rrbracket\langle x\rangle\xrightarrow{U_1}\Lambda\llbracket U_1\rrbracket\langle y\rangle\rightarrow0.
	\end{equation}
	By Lemma \ref{Lem-2}, $\underline{F}^-_{W_2,\Gamma_2;\Lambda_{\overline{\omega}_2}}$ is non-zero. Therefore, up to a unit in $\Lambda$ and chain homotopy, the map $\underline{f}^-_{W_2,\Gamma_2;\Lambda_{\overline{\omega}_2}}$ (on the chain level) sends the generator of $\underline{CF}^-(\emptyset)$ to $ y\otimes x^\vee$ where $x^\vee$ is dual of $x$. To compute $\underline{f}^-_{W_1,\Gamma_1;\Lambda_{\overline{\omega}_1}}$, we decompose $(W_1,\Gamma_1)$ into three pieces $(W_1^i,\Gamma_1^i)$ (see Figure \ref{Fig-12} in the middle). Let $\overline{\omega}_{1i}=\overline{\omega}_1|_{W_1^i}$.
	\begin{enumerate}
		\item $(W_1^1,\Gamma_1^1)$ is a cobordism from $\emptyset$ to $(Y_1\amalg-Y_1,\mathbf{w}_1)$ where $Y_1$ is a copy of $Y$, $\mathbf{w}_1$ intersects each of $Y$ and $-Y$ in a single  basepoint $\{w'_{11}\}$ and $\{w_{11}\}$, respectively,  and $\Gamma_1^1$ is a path that connects the two  basepoints $\{w'_{11}\}$ and $\{w_{11}\}$. This is a twisted cotrace map which, up to a unit in $\Lambda$, sends the generator of $\underline{CF}^-(\emptyset)=\mathcal{R}$ to $x\otimes x^\vee+y\otimes y^\vee$ where $x^\vee$ and $y^\vee$ are duals of $x$ and $y$  (see \cite[Theorem 1.7 and Subsection 12.2]{Z21}).
		\item $(W_1^2,\Gamma_1^2)$ is a two component cobordism from $(Y_1\amalg-Y_1,\mathbf{w}_1)$ to $(Y_2\amalg-Y_2,\mathbf{w}_2)$ where $Y_2$ is a copy of $Y$ and $\mathbf{w}_2$ intersects $Y_2$ in two  basepoints $\{w''_0,w'_{12}\}$ and intersect $-Y_2$ in a single  basepoint $\{w_{12}\}$. Two components of $\Gamma_1^2$ connect $w'_{11}$ and $w_{11}$ in $Y_1$ and $-Y_1$, respectively, to $w'_{12}$ and $w_{12}$ in $Y_2$ and $-Y_2$, respectively. Also, one component of $\Gamma_1^2$ connects $w'_0$ to $w''_0\in Y_2$. One component of this cobordism corresponds to identity and the other one is a free stabilization $S^+_{w''_0}$. Let $\mathcal{H}$ and $\mathcal{H}_1$ be Heegaard diagrams for $Y_1$ and $Y_2$, respectively, where $\mathcal{H}_1$ (see Figure \ref{Connected sum change base} on the left) is a connected sum of $\mathcal{H}$ with $\mathcal{H}_0$ (see Figure \ref{HD-Sphere-2})  such that a  basepoint in $\mathcal{H}$ is identified with $w'_{12}$. We have
		\[
		\underline{f}^-_{W_1^2,\Gamma_1^2;\Lambda_{\overline{\omega}_{12}}}:\underline{CF}^-(\mathcal{H};\Lambda_\omega)\otimes_\Lambda\underline{CF}^-(-\mathcal{H};\Lambda_\omega)\rightarrow \underline{CF}^-(\mathcal{H}_1;\Lambda_\omega)\otimes_\Lambda\underline{CF}^-(-\mathcal{H};\Lambda_\omega).
		\] 
		By Theorem \ref{Thm-1}, we can compute $\underline{CF}^-(\mathcal{H}_1;\Lambda_\omega)$ in terms of $\underline{CF}^-(\mathcal{H};\Lambda_\omega)$, where the later is chain homotopy equivalent to the chain complex  in \eqref{eq-3}. Therefore, the above map  sends a generator $u\otimes v$ to $ u\otimes v\otimes\theta^+$, up to a unit  (see the proof of Lemma A.1.35 of \cite{phdthesis} for more details). Here $-\mathcal{H}$ is a Heegaard diagram for $Y_1$ obtained from $\mathcal{H}$ by reversing the orientation of Heegaard surface. 
		\item $(W_1^3,\Gamma_1^3)$ is a two component cobordism from $(Y_2\amalg-Y_2,\mathbf{w}_2)$ to $(Y_3\amalg-Y_3,\mathbf{w}_3)$ where $Y_3$ is a copy of $Y$ and $\mathbf{w}_3$ intersects each of $Y_3$ and $-Y_3$ in $w_0$ and $w_1$, respectively. One component corresponds to the identity and the other one is a free stabilization map $S^-_{w'_{12}}$. Let $\mathcal{H}_2$ be a Heegaard diagram for $Y_2$ which is a connected sum of $\mathcal{H}$ with $\mathcal{H}_0$  where a  basepoint in $\mathcal{H}$ is identified with $w''_0$ (see Figure \ref{Connected sum change base} on the right). We have
		\[
		\underline{f}^-_{W_1^3,\Gamma_1^3;\Lambda_{\overline{\omega}_{13}}}:\underline{CF}^-(\mathcal{H}_2;\Lambda_\omega)\otimes\underline{CF}^-(-\mathcal{H};\Lambda_\omega)\rightarrow \underline{CF}^-(\mathcal{H};\Lambda_\omega)\otimes\underline{CF}^-(-\mathcal{H};\Lambda_\omega).
		\]
		By Theorem \ref{Thm-1} and the chain complex in \eqref{eq-3}, $\underline{f}^-_{W_1^3,\Gamma_1^3;\Lambda_{\overline{\omega}_{13}}}$, up to a unit, sends a generator $u\otimes v\otimes\theta^-$ to $u\otimes v$ and $u\otimes v\otimes\theta^+$ to $0$ (see the proof of Lemma A.1.35 of \cite{phdthesis} for more details).
	\end{enumerate} 
Therefore, 
\[
\begin{split}
\underline{f}^-_{W_1,\Gamma_1;\Lambda_{\overline{\omega}_1}}=\ \underline{f}^-_{W_1^3,\Gamma_1^3;\Lambda_{\overline{\omega}_{13}}}\circ\underline{\Psi}_{(\mathcal{H}_1,J_1)\rightarrow(\mathcal{H}_2,J_2)}\circ\underline{f}^-_{W_1^2,\Gamma_1^2;\Lambda_{\overline{\omega}_{12}}}\circ\underline{f}^-_{W_1^1,\Gamma_1^1;\Lambda_{\overline{\omega}_{13}}},
\end{split}
\]
where $J_i$, $i=1,2$, are the almost complex structures in Proposition \ref{Prop-1}. Note that 
\[
\begin{split}
A&=\underline{f}^-_{W_1^2,\Gamma_1^2;\Lambda_{\overline{\omega}_{12}}}\circ\underline{f}^-_{W_1^1,\Gamma_1^1;\Lambda_{\overline{\omega}_{13}}}(1)=\underline{f}^-_{W_1^2,\Gamma_1^2;\Lambda_{\overline{\omega}_{12}}}(x\otimes x^\vee+y\otimes y^\vee)\\
&=x\otimes x^\vee\oplus\theta^++y\otimes y^\vee\oplus\theta^+=(x\otimes x^\vee\oplus+y\otimes y^\vee,0),
\end{split}
\]
where $1$ denotes the generator of $\mathcal{R}$ and the last equality shows $A$ in the matrix notation with $\theta^\pm$ components. Since $\underline{f}^-_{W_1^3,\Gamma_1^3;\Lambda_{\overline{\omega}_{13}}}$ sends a generator $u\otimes v\otimes\theta^+$ to zero, by Proposition \ref{Prop-1}, we only need to find the image of the component $x\otimes x^\vee\oplus+y\otimes y^\vee$ of $A$ under the  entry 
\begin{equation}\label{eq-0}
(\underline{\Phi}_{\boldsymbol{\alpha}\rightarrow\boldsymbol{\alpha}'}^{\boldsymbol{\beta}'})_{U_{w'_{12}}}^{U_z\rightarrow U_{w''_0}}\circ(\sum U_{w''_0}^iU_{w'_{12}}^j(\underline{\partial}_{i+j+1})_{U_{w''_0,w'_{12}}})\circ (\underline{\Phi}_{\boldsymbol{\alpha}}^{\boldsymbol{\beta}\rightarrow\boldsymbol{\beta}'})_{U_{w''_0}}^{U_z\rightarrow U_{w'_{12}}}
\end{equation}
of $\underline{\Psi}_{(\mathcal{H}_1,J_1)\rightarrow(\mathcal{H}_2,J_2),\mathfrak{s}}$. Note that $x\otimes x^\vee\oplus+y\otimes y^\vee$ has no $U$-power, therefore,  $(\underline{\Phi}_{\boldsymbol{\alpha}\rightarrow\boldsymbol{\alpha}'}^{\boldsymbol{\beta}'})_{U_{w'_{12}}}^{U_z\rightarrow U_{w''_0}}$ and $(\underline{\Phi}_{\boldsymbol{\alpha}}^{\boldsymbol{\beta}\rightarrow\boldsymbol{\beta}'})_{U_{w''_0}}^{U_z\rightarrow U_{w'_{12}}}$ can be regarded as the identity, up to a factor $t^z$, $z\in\mathbb{R}$. By (\ref{eq-3}), $\underline{CF}^-(\mathcal{H}_2,\{z,w'_{12}\};\Lambda_\omega)\otimes\underline{CF}^-(-\mathcal{H},\{z\};\Lambda_\omega)$ with the differential $\underline{\partial}=\sum_{i\geq1}^{\infty}\underline{\partial}_iU_z^i$ is isomorphic with
\begin{center}
	$\begin{CD}
		\mathcal{R}\langle x,y^\vee\rangle @>U_z>> \mathcal{R}\langle x,x^\vee\rangle\\
		@VU_{w'_{12}}VV @VVU_{w'_{12}}V\\
		\mathcal{R}\langle y,y^\vee\rangle @>U_z>> \mathcal{R}\langle y,x^\vee\rangle.
	\end{CD}$
\end{center}
Therefore, the map in \ref{eq-0}, sends $x\otimes x^\vee\oplus+y\otimes y^\vee$ to $y\otimes x^\vee$, up to a unit, and we have
\[
\begin{split}
\underline{f}^-_{W_1^3,\Gamma_1^3;\Lambda_{\overline{\omega}_{13}}}\circ
\Psi_{(\mathcal{H}_1,J_1)\rightarrow(\mathcal{H}_2,J_2)}(A)=\underline{f}^-_{W_1^3,\Gamma_1^3;\Lambda_{\overline{\omega}_{13}}}(*\otimes\theta^++y\otimes x^\vee\otimes\theta^-)=y\otimes x^\vee.
\end{split}
\]
This observation completes the proof. 
\end{proof}

\begin{proof}[Proof of Theorem \ref{MainThm}]
	The proof is a slight modification of the proof of Theorem A.1.30 of \cite{phdthesis}. One can construct a cobordism $W$ from $Y_1$ to $Y_2$. In fact, let $Y'$ denote a three manifold obtained by cutting $Y_1$ along $F=\Sigma_1\cup\Sigma_2$ and $P$ be a saddle with boundary and corners (see Figure \ref{Fig-13} on the left). If we glue $Y'\times I$ to $P\times \Sigma$, where $\Sigma\cong\Sigma_1\cong\Sigma_2$, as in Figure \ref{Fig-13} on the right, we obtain the cobordism $W$  (see \cite{phdthesis} for more details).  Let $Y_1$ be disconnected (the proof for the connected case is similar ). Let $W'=-W$ be the upside-down cobordism from $Y_2$ to $Y_1$ and $W_A=W\cup_{Y_2}W'$ and $W_B=W'\cup_{Y_1}W$. We can obtain $W_A$ and $W_B$ from a product cobordism as follows. Suppose that $W''_A$ (resp. $W''_B$) is the product cobordism $Y_1\times [-1,1]$ (resp. $Y_2\times[-1,1]$). A neighborhood of $F\times\{0\}$ in $W''_A$ (resp. $W''_B$) can be identified with $F\times [-\epsilon,\epsilon]\times [-\epsilon,\epsilon]$ for some small $\epsilon>0$. If we remove the interior of  $F\times [-\epsilon,\epsilon]\times [-\epsilon,\epsilon]$ and identify its two boundary components with the boundary components of the manifold $\Sigma\times S^1\times[-\epsilon',\epsilon']$, where $\epsilon'>0$, we obtain $W_A$ (see Figure \ref{Figure-0}) (resp. $W_B$). Let $Y_A:=\Sigma\times S^1\subset W_A$ and  $N(Y_A)$ denote a neighborhood of $Y_A$. Similarly, $Y_B:=\Sigma\times S^1\subset W_B$ and  $N(Y_B)$ denote a neighborhood of $Y_B$.
	
	\begin{figure}[h!]
		\def\svgwidth{12cm}
		\begin{center}
\begingroup%
  \makeatletter%
  \providecommand\color[2][]{%
    \errmessage{(Inkscape) Color is used for the text in Inkscape, but the package 'color.sty' is not loaded}%
    \renewcommand\color[2][]{}%
  }%
  \providecommand\transparent[1]{%
    \errmessage{(Inkscape) Transparency is used (non-zero) for the text in Inkscape, but the package 'transparent.sty' is not loaded}%
    \renewcommand\transparent[1]{}%
  }%
  \providecommand\rotatebox[2]{#2}%
  \newcommand*\fsize{\dimexpr\f@size pt\relax}%
  \newcommand*\lineheight[1]{\fontsize{\fsize}{#1\fsize}\selectfont}%
  \ifx\svgwidth\undefined%
    \setlength{\unitlength}{389.19443686bp}%
    \ifx\svgscale\undefined%
      \relax%
    \else%
      \setlength{\unitlength}{\unitlength * \real{\svgscale}}%
    \fi%
  \else%
    \setlength{\unitlength}{\svgwidth}%
  \fi%
  \global\let\svgwidth\undefined%
  \global\let\svgscale\undefined%
  \makeatother%
  \begin{picture}(1,0.45788306)%
    \lineheight{1}%
    \setlength\tabcolsep{0pt}%
    \put(0,0){\includegraphics[width=\unitlength,page=1]{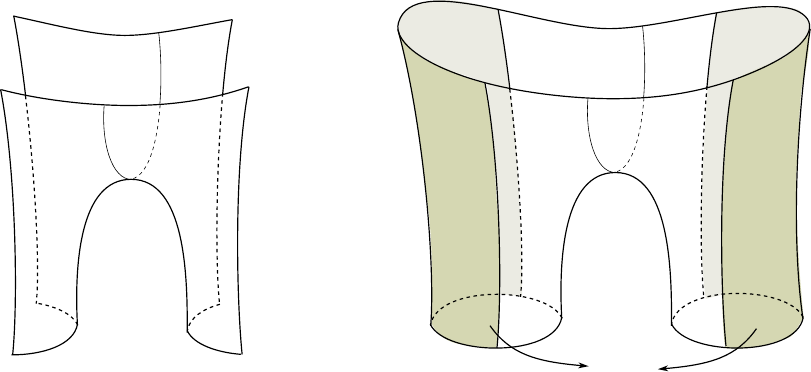}}%
    \put(0.05533772,0.20410373){\color[rgb]{0,0,0}\makebox(0,0)[lt]{\lineheight{1.25}\smash{\begin{tabular}[t]{l}$P$\end{tabular}}}}%
    \put(0.61504183,0.27714166){\color[rgb]{0.01176471,0.01176471,0.01176471}\makebox(0,0)[lt]{\lineheight{1.25}\smash{\begin{tabular}[t]{l}$P\times\Sigma$\end{tabular}}}}%
    \put(0.72748046,0.00334627){\color[rgb]{0.01176471,0.01176471,0.01176471}\makebox(0,0)[lt]{\lineheight{1.25}\smash{\begin{tabular}[t]{l}$Y'\times I$\end{tabular}}}}%
  \end{picture}%
\endgroup%

			\caption{Construction of the cobordism $W$ from $Y_1$ to $Y_2$. Left: A saddle $P$. Right: Gluing $Y'\times I$ to $P\times\Sigma$.}
			\label{Fig-13}
		\end{center}
	\end{figure}

\begin{figure}[h!]
	\def\svgwidth{14cm}
	\begin{center}
		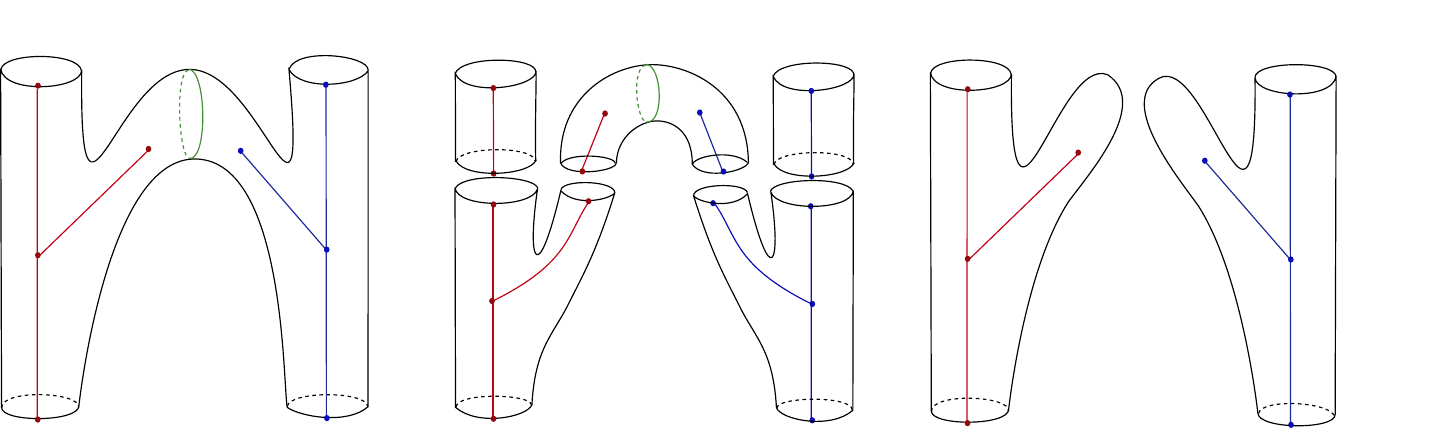
		\caption{ Figures A.7 and A.8 of \cite{phdthesis}}
		\label{Figure-0}
	\end{center}
\end{figure}

	Let $w_i$, $i=1,2$, denote two  basepoints, one in each connected component of $Y_1$. Note that $W_A$ is a cobordism from $Y_1$ to $-Y_1$. Let $w_i^c$, $i=1,2$, denote the corresponding  basepoints in $-Y_1$ and $\Gamma_A$ be a two component path which connects $w_i$ to $w_i^c$ in the boundaries of $W_A$. Let $z_i$, $i=1,2$, denote two  basepoints in $Y_2$.  Note that $W_B$ is a cobordism from $Y_2$ to $-Y_2$. Let $z_i^c$, $i=1,2$, denote the corresponding  basepoints in the copy $-Y_2$ and $\Gamma_B$ be a two component path which connects $z_i$ to $z_i^c$ in the two boundaries of $W_B$. Let $w'_i$, $i=1,2$, be two points in $W_A$ and $\Gamma'_A$ be obtained from $\Gamma_A$ by connecting one of the $w'_i$ by an arc to each component of $\Gamma_A$. Each component of $\Gamma'_A$ has a vertex of valence three. Fix an ordering for these vertices (see Figure \ref{Figure-0} on the left). From the Mayer-Vietoris sequence 
	\[
	\cdots\rightarrow H^2(W;\mathbb{R})\rightarrow H^2(Y'\times I)\oplus H^2(\Sigma\times P)\rightarrow\cdots
	\]
	and the assumptions on $\omega_i\in H^2(Y_i;\mathbb{R})$, there are $\overline{\omega}\in H^2(W;\mathbb{R})$, $\omega_A\in H^2(W_A;\mathbb{R})$, and $\omega_B\in H^2(W_B;\mathbb{R})$ such that $\overline{\omega}|_{Y_i}=\omega_i$, $\omega_A|_{Y_1}=\omega_1$, $\omega_B|_{Y_2}=\omega_2$, $\omega_A|_{Y_A}\neq0$, and $\omega_B|_{Y_B}\neq0$.

	One can decompose $(W_A,\Gamma'_A)$ into two cobordisms (see Figure \ref{Figure-0} in the middle). One cobordism consists of $N(Y_A)$ (as a cobordism from $\emptyset$ to $\Sigma\times S^1\amalg(-\Sigma\times S^1)$) and two copies of $Y_1\times I$. If we replace $N(Y_A)$ with a cobordism corresponding to $\Sigma\times D^2\amalg(-\Sigma\times D^2)$, we will obtain a cobordism $(W''_A,\Gamma''_A)$ (see Figure \ref{Figure-0} on the right). $\Gamma''_A$ is the graph induced by $\Gamma'_A$. Therefore, there are two edges in $\Gamma''_A$ with endpoints in the interior of $W''_A$ (one end point in each component is of valence one).  By \cite[Lemma 7.15]{Z15},  one can replace  $\Gamma''_A$ with a two component path which connects $w_i$ to $w^c_i$ in the boundaries
	of $W''_A$. We also denote this graph by $\Gamma''_A$. Therefore, $(W''_A,\Gamma''_A)$ is diffeomorphic to the product cobordism. If $\mathcal{H}_1$ denotes a Heegaard diagram for $Y_1$, the induced map
		\[
	\underline{F}^-_{W''_A,\Gamma''_A;\Lambda_{\omega'_A}}=\bigoplus_{i}\underline{F}^-_{W''_A,\Gamma''_A,\mathfrak{t}_{i};\Lambda_{\omega'_A}}
	\]
	where 
	\[
	\underline{F}^-_{W''_A,\Gamma''_A,\mathfrak{t}_{i};\Lambda_{\omega'_A}}:\underline{HF}^-(\mathcal{H}_1,\{w_1,w_2\},\mathfrak{s}_i;\Lambda_{\omega_1})\rightarrow\underline{HF}^-(\mathcal{H}_1,\{w_1,w_2\},\mathfrak{s}_i;\Lambda_{\omega_1}),
	\]
	is a diagonal matrix with units in $\Lambda$ on the diagonal entries. Here, $[\omega'_A]\in H^2(W''_A;\mathbb{R})$ is such that $\omega'_A|_{Y_1}=\omega_1$, and $\mathfrak{t}_{i}$ denotes the Spin$^\text{c}$ structure in $\text{Spin}^\text{c}(W''_A)$ whose restriction to the boundary components of $W''_A$, which are two copies of $Y_1$, is  $\mathfrak{s}_i\in\text{Spin}^\text{c}(Y_1)$.
	
	By Lemma \ref{Main-Lem}, 
	\[
	\underline{F}^-_{W_A,\Gamma'_A;\Lambda_{\omega_A}}\ \dot{\simeq}\ \underline{F}^-_{W''_A,\Gamma''_A;\Lambda_{\omega'_A}}.
	\]
	By \cite[Lemma 7.15]{Z15}, $(W_A,\Gamma_A)$ and $(W_A,\Gamma'_A)$ induce the same cobordism map up to a unit. Therefore, $\underline{F}^-_{W_A,\Gamma_A;\Lambda_{\omega_A}}$ is an invertible diagonal matrix $[\lambda_i]$, where $\lambda_i\in\Lambda$. 
	
	If $\mathcal{H}_2$ denotes a Heegaard diagram for $Y_2$, a similar argument proves that the induced map
	\[
	\underline{F}^-_{W_B,\Gamma_B;\Lambda_{\omega_B}}:\bigoplus_{i}\underline{HF}^-(\mathcal{H}_2,\{z_1,z_2\},\mathfrak{s}'_i;\Lambda_{\omega_2})\rightarrow\bigoplus_{i}\underline{HF}^-(\mathcal{H}_2,\{z_1,z_2\},\mathfrak{s}'_i;\Lambda_{\omega_2}),
	\]
	is a diagonal matrix $[\gamma_i]$ with units $\gamma_i\in\Lambda$ on the diagonal entries. Here, $\mathfrak{s}'_i\in\text{Spin}^\text{c}(Y_2)$.This proves that 	
		\[
		\underline{HF}(Y_1;\Lambda_{\omega_1})\cong  \underline{HF}(Y_2;\Lambda_{\omega_2}).
		\]
	This isomorphism is induced by a cobordism. In the following, we show that the map induced by the cobordism is well-defined. Let
	\[
	\mathfrak{T}_{ij}=\{\mathfrak{t}\in\text{Spin}^\text{c}(W)\mid\mathfrak{t}|_{Y_1}=\mathfrak{s}_i,\ \mathfrak{t}|_{Y_2}=\mathfrak{s}'_j\}
	\ \ \textnormal{and}\ \ 
	\mathfrak{T}'_{ij}=\{\mathfrak{t}\in\text{Spin}^\text{c}(W')\mid\mathfrak{t}|_{Y_2}=\mathfrak{s}'_i,\ \mathfrak{t}|_{Y_1}=\mathfrak{s}_j\}.
	\] 
	Let $F=[F_{ij}]$ and $G=[G_{ij}]$ where
	\[
	F_{ji}=\sum_{\mathfrak{t}\in\mathfrak{T}_{ij}}\underline{F}_{W,\Gamma,\mathfrak{t};\Lambda_{\overline{\omega}}}:\underline{HF}^-(\mathcal{H}_1,\{w_1,w_2\},\mathfrak{s}_i;\Lambda_{\omega_1})\rightarrow\underline{HF}^-(\mathcal{H}_2,\{z_1,z_2\},\mathfrak{s}'_j;\Lambda_{\omega_2}),
	\] 
	\[
	G_{ji}=\sum_{\mathfrak{t}\in\mathfrak{T}'_{ij}}\underline{F}_{W',\Gamma,\mathfrak{t};\Lambda_{\overline{\omega}}}:\underline{HF}^-(\mathcal{H}_2,\{z_1,z_2\},\mathfrak{s}'_i;\Lambda_{\omega_2})\rightarrow\underline{HF}^-(\mathcal{H}_1,\{w_1,w_2\},\mathfrak{s}_j;\Lambda_{\omega_1}),
	\] 
	and $\Gamma$ consists of a pair of paths that connects $w_i$ to $z_i$, $i=1,2$. We have
	\[
	\underline{F}^-_{W',\Gamma;\Lambda_{\overline{\omega}}}=\bigoplus_j(\sum_iG_{ij}), 
	\]
	and we show that $\pi_{s_j}\circ\underline{F}^-_{W',\Gamma;\Lambda_{\overline{\omega}}}$ is well-defined up to a unit. Indeed, by Lemma \ref{lem-Zemke}, $F_{ij}$ and $G_{ij}$ are well-defined. Let $\mathcal{H}'_i$, $i=1,2$, denote two Heegaard diagrams for $Y_i$ with transition maps
	\[
	\underline{\Psi}_{i}=\underline{\Psi}_{(\mathcal{H}_1,J_1)\longrightarrow(\mathcal{H}'_1,J'_1),\mathfrak{s}_i},\ \ 
	\underline{\Phi}_i=\underline{\Phi}_{(\mathcal{H}_2,J_2)\longrightarrow(\mathcal{H}'_2,J'_2),\mathfrak{s}'_i}.
	\]
	Let $G'=[G'_{ij}]$ where
	\[
	G'_{ji}:\underline{HF}^-(\mathcal{H}'_2,\{z_1,z_2\},\mathfrak{s}_i;\Lambda_{\omega_2})\rightarrow\underline{HF}^-(\mathcal{H}'_1,\{w_1,w_2\},\mathfrak{s}_j;\Lambda_{\omega_1}),
	\] 
	is the map induced by the  cobordism $W'$. From the above argument, 
	\begin{equation}\label{eq-1}
	F\circ G=[\gamma_i], \ \ \ \ \ G\circ F=[\lambda_i].
	\end{equation}
	Therefore, $G\circ[\gamma_i]=[\lambda_i]\circ G$. If $G_{ij}\neq0$, $\gamma_j=\lambda_i$. 
	Similarly, there are invertible diagonal matrices $[\gamma'_i]$ and $[\lambda'_i]$ such that
	\begin{equation}\label{eq-2}
	F\circ\underline{\Psi}^{-1}\circ G'\circ\underline{\Phi}=[\gamma'_i],\ \ \ \ \ \underline{\Psi}^{-1}\circ G'\circ\underline{\Phi}\circ F=[\lambda'_i],	
	\end{equation}  
    where $\underline{\Psi}$ and $\underline{\Phi}$ are  diagonal matrices with $\underline{\Psi}_{ii}=\underline{\Psi}_i$, $\underline{\Phi}_{ii}=\underline{\Phi_i}$. 
    This shows that $\gamma'_j=\lambda'_i$ if $(\underline{\Psi}\circ G'\circ\underline{\Phi}^{-1})_{ij}\neq0$. From Equations \eqref{eq-1} and \eqref{eq-2}, we have 
    \[
    [\lambda_i]\underline{\Psi}^{-1}\circ G'\circ\underline{\Phi}=G[\gamma'_i],
    \]
    which shows that $\lambda_i(\underline{\Psi}^{-1}\circ G'\circ\underline{\Phi})_{ij}=G_{ij}\gamma'_j$. Therefore, when $(\underline{\Psi}^{-1}\circ G'\circ\underline{\Phi}^{-1})_{ij}\neq0$,  $\gamma_j(\underline{\Psi}\circ G'\circ\underline{\Phi})_{ij}=G_{ij}\gamma'_j$. From here,
    \[
    \sum_iG_{ij}=\frac{\gamma_j}{\gamma'_j}\sum_i(\underline{\Psi}^{-1}\circ G'\circ\underline{\Phi})_{ij}\ \dot{\cong}\ \sum_i(\underline{\Psi}^{-1}\circ G'\circ\underline{\Phi})_{ij}.
    \] 
    This completes the proof.
\end{proof}
\begin{remark}\label{Independence-diff}
	Note that Theorem \ref{MainThm} implies that we can use two orientation-preserving diffeomorphisms $h,h':\Sigma_1\rightarrow\Sigma_2$ to identify $\Sigma_1$ with $-\Sigma_2$ using $h$ and identify $-\Sigma_1$ with $\Sigma_2$ using $h'$. Indeed, assume that  $Y_2$ is obtained from $Y_1$ by excision along the surfaces $\Sigma_1$ and $\Sigma_2$ (using the orientation-preserving diffeomorphism $h$ ). Let $Y=Y_2\cup T_{\phi}$, where $\phi=h^{-1}\circ h'$ and $T_\phi$ is the mapping torus of $\phi$.  
	By Theorem \ref{MainThm} and Theorem \ref{Thm-2},
	\[
	\underline{HF}(Y_3;\Lambda_{\omega_3})\cong \underline{HF}(Y_2;\Lambda_{\omega_2}),
	\]
	for a generic choice of $[\omega_3]\in H^2(Y_3;\mathbb{R})$ (as stated in Theorem \ref{MainThm}). Note that if we cut $Y_1$ along $\Sigma_1\cup\Sigma_2$ to obtain the three manifold $Y'$, and identify $\Sigma_1$ with $-\Sigma_2$ using $h$, and $-\Sigma_1$ with $\Sigma_2$ using $h'$, we obtain the 3-manifold $Y_3$.
\end{remark}

\section{Applications}\label{Section 3}
Let  $K$ be a genus one knot, and $Y_p=S^3_p(K)$ be the 3-manifold obtained by performing p-surgery on $K$. Therefore, $Y_0$ contains a non-separating torus. 
In this section, first  we compute $\underline{HF}^+(Y_0;\Lambda)$ when $K$ is a \emph{twist knot} which is, by definition, an  $n$-twisted Whitehead double of the unknot (see Figure \ref{Whitehead double}). Then  we use Theorem \ref{MainThm} to compute Heegaard Floer homology groups for the manifold obtained by cutting $Y_0$ along the non-separating torus and regluing it using a Dehn twist. We use this computation to prove Corollary \ref{Topology-result} and Corollary \ref{two-bridge}.

We recall some facts from knot Floer homology theory that are used in this section. There are several variants of bi-graded knot Floer homology groups, introduced independently by  Ozsv\'ath-Szab\'o and Rasmussen (see \cite{OZSVATH200458,Rasmussen2003FloerHA}).  Indeed, given a knot $K$, there exists a doubly pointed Heegaard diagram
$\mathcal{H}=(\Sigma,\boldsymbol{\alpha},\boldsymbol{\beta},z,w)$ that represents $K$.
The $\mathbb{Z}$-filtered chain complex, denoted $CFK^\infty(K)$, which is well-defined up to filtered chain homotopy
equivalence, is generated by elements in $\mathbb{T}_\alpha\cap \mathbb{T}_\beta$ as a $\mathbb{Z}[U,U^{-1}]$-module. Associated to each generator there are two gradings, called the Maslov grading $M$ and Alexander grading $A$. One can think of $CFK^\infty(K)$ as freely generated over $\mathbb{Z}$ by $[\mathbf{x},i,j]$ where $\mathbf{x}\in \mathbb{T}_\alpha\cap \mathbb{T}_\beta$, $i,j\in\mathbb{Z}$, and $A(\mathbf{x})=j-i$. $[\mathbf{x},i,j]$ corresponds  to the generator $U^{-i}x$. $CFK^\infty(K)$ is called the full knot Floer complex and each generator $[\mathbf{x},i,j]$ is presented by a dot located at the $(i,j)$ coordinates on the plane.  The sub-complex generated by $[\mathbf{x},0,j']$ induces a filtration on $\widehat{CF}(S^3)$ 
\[
\dots\subset\mathcal{F}(K,j-1)\subset\mathcal{F}(K,j)\subset \mathcal{F}(K,j+1)\dots\subset\widehat{CF}(S^3),
\]
where $\mathcal{F}(K,j)$ is freely generated over $\mathbb{Z}$ by $[\mathbf{x},0,j']$ such that  $j'\leq j$. 
The homology of the associated graded complex $\oplus_{j}\mathcal{F}(K,j)/\mathcal{F}(K,j-1)$ is denoted by
\[
\widehat{HFK}(K)=\bigoplus_{i,s}\widehat{HFK}_i(K,s),
\]
where $i$ and $s$ are the Masolv grading and the Alexander grading. $\widehat{HFK}(K)$
categorifies the Alexander polynomial (see \cite{OZSVATH200458}) in the sense that
\[
\Delta_K(t)=\sum_{i,s}(-1)^i\dim\widehat{HFK}_i(K,s)t^s. 
\]
For alternating knots, $\widehat{HFK}$ is determined by the Alexander polynomial. Indeed, if $\Delta_K(t)=a_0+\sum_{s>0}a_s(T^s+T^{-s})$ denotes the symmetrized Alexander polynomial of $K$, then for the alternating knot $K$, $\widehat{HFK}(K,s)$ is supported in dimension $s+\frac{\sigma}{2}$ where $\sigma$ denotes the signature of $K$ and
\[
\widehat{HFK}(K,s)\cong\mathbb{Z}^{|a_s|}.
\]

One can define a third grading $\delta=A-M$, called the $\delta$-grading.
A knot $K$ is called \emph{Floer homologically thin}, or \emph{thin} for short, if $\widehat{HFK}(K)$ is  supported in a single $\delta$-grading. If the homology is supported on the diagonal $\delta=-\sigma/2$,
where $\sigma$ denotes the knot signature, then we say the knot is $\sigma$-thin.

Fixing a field $k$, the Heegaard Floer homology group $\widehat{HF}(S^3)$ of the three-sphere, with coefficients in $k$, is isomorphic to $k$, supported in homological
grading zero. One can define the following:
\[
\tau(K)=\min\{j\in\mathbb{Z}|i_*:H_*(\mathcal{F}(K,j))\rightarrow H_*(\widehat{CF}(S^3))\ \text{is non-trivial}\}.
\] 
Another sub-complex of $CFK^\infty(K)$ which is generated by $[\mathbf{x},i,j]$, $i\leq0$, is denoted by $CFK^-(K)$ and induces a filtration on $CF^-(S^3)$. The filtered
chain homotopy type of $CFK^-(K)$ is a knot invariant. If $K$ is thin, $CFK^-(K)$ is completely determined by $\tau(K)$ and $\Delta_K(t)$, the Alexander polynomial associated to $K$ (see \cite[Theorem 4]{Petkova2009CablesOT}). Note that for a $\sigma$-thin knot $\tau(K)=-\frac{\sigma}{2}$. 

The  negative $n$-twisted Whitehead double of the unknot, denoted $D_-(U,n)$ where $U$ is the unknot, is shown in Figure \ref{Whitehead double} on the left (the $"-"$ indicates the parity of the clasp). Let $D_+(U,n)$ denote the positive $n$-twisted Whitehead double of the unknot, where the clasp is positive. Note that $D_-(U,n)$ is an alternating knot with the Alexander polynomial $\Delta_{D_-(U,n)}(t)=n(t+t^{-1})+(-2n+1)$. Since alternating knots are $\sigma$-thin, the above argument determines $CFK^-(D_-(U,n))$ and therefore its full knot complex (see Figure \ref{Knot Complex}).

\begin{figure}[!h]
	\begin{center}
		\def\svgwidth{7cm}
		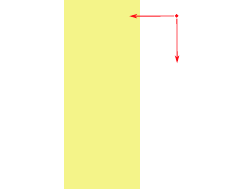
		\caption{This picture shows the knot complex for $D_-(U,n)$ where each generator $[x,i,j]$ is shown as $x$. To distinguish the generators, the $U^0$ column which contains the generators with $[x,0,j]$ is highlighted. In this column, $A(w_i)=A(x_i)=A(a_2)=0$, $A(a_3)=A(y_i)=1$, $A(a_1)=A(z_i)=-1$, $i=1,\cdots,n-1$, and $M(a_3)=0$. With this information, the generator $a_3$, for instance, which is not in the $U_0$ column represents $[a_3,-2,0]$. }
		\label{Knot Complex}
	\end{center}
\end{figure}

\begin{lemma}\label{Lem-Ex}
	Let $F$ be a genus one surface obtained by capping off the Seifert surface of $D_-(U, n)$ (resp. $D_+(U,-n)$), $n>0$, in $S^3_0(D_-(U,n))$ (resp. $S^3_0(D_+(U,-n))$) and $[\omega]\in H^2(S^3_0(D_\pm(U,\mp n));\mathbb{R})$ be a cohomology class such that $\omega([F])\neq0$. There is an isomorphism of $\Lambda$-modules
	\[
		\underline{HF}(S^3_0(D_\pm(U,\mp n));\Lambda_\omega)\cong\Lambda^n,
	\]
	supported in a single grading.
\end{lemma}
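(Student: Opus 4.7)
The plan is to apply the Ozsv\'{a}th--Szab\'{o} integer surgery (mapping cone) formula in the $\omega$-twisted setting of \cite{JM} to the twist knot $K = D_-(U,n)$; the case $K = D_+(U,-n)$ reduces to its mirror. First, because $K$ is alternating and therefore $\sigma$-thin, its full knot Floer complex $CFK^\infty(K)$ is determined by the symmetrized Alexander polynomial $\Delta_K(t) = n(t + t^{-1}) + (1-2n)$ together with $\tau(K) = 0$. This forces $CFK^\infty(K)$ into the explicit form pictured in Figure \ref{Knot Complex}, from which one reads off $\widehat{HFK}(K, \pm 1) \cong \mathbb{F}_2^n$, $\widehat{HFK}(K, 0) \cong \mathbb{F}_2^{2n-1}$, with every generator in Alexander grading $s$ sitting in a single Maslov grading $s - \sigma(K)/2$.

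With $\lambda := \omega([F]) \neq 0$, we then realize $\underline{HF}^+(S^3_0(K); \Lambda_\omega)$ as the homology of the twisted mapping cone
\[
\mathbb{X}^+_\omega \;=\; \operatorname{Cone}\!\Bigl(\,\bigoplus_{s \in \mathbb{Z}} A^+_s \otimes_{\mathbb{F}_2} \Lambda \; \xrightarrow{\,D_\omega\,}\; \bigoplus_{s \in \mathbb{Z}} B^+_s \otimes_{\mathbb{F}_2} \Lambda\,\Bigr),
\]
where $A^+_s, B^+_s$ are the standard Ozsv\'{a}th--Szab\'{o} sub- and quotient complexes of $CFK^\infty(K)$ and $D_\omega|_s = v^+_s - t^\lambda\, h^+_s$; the factor $t^\lambda$ records the nonzero pairing of $\omega$ with the cobordism class supporting the horizontal map. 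For $|s| > g(K) = 1$, one of $v^+_s, h^+_s$ is already a quasi-isomorphism and the $s$-th summand is acyclic, so only $s \in \{-1, 0, 1\}$ contribute. Moreover, on the $\textnormal{Spin}^\text{c}$ structure $\mathfrak{s}_0$, the infinite $U$-tower in the untwisted $HF^+(S^3_0(K),\mathfrak{s}_0)$ becomes acyclic after twisting: in suitable coordinates the tower component of $D_\omega$ reduces to multiplication by $1 - t^\lambda$, which is a unit in the Novikov field $\Lambda$.

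What remains after these reductions is a finite-dimensional $\Lambda$-complex built from the portion of Figure \ref{Knot Complex} with $|i|, |j| \leq 1$; a direct cancellation count yields $n$ free $\Lambda$-summands, originating from $\widehat{HFK}(K, 1)$ together with its conjugation-symmetric partner $\widehat{HFK}(K, -1)$ in $\mathfrak{s}_{\pm 1}$. Because $K$ is $\sigma$-thin, all these generators share a common $\delta$-grading, and after the constant Maslov shifts introduced by the surgery cobordisms they lie in a single absolute $\mathbb{Q}$-grading, accounting for the ``single-grading'' clause. The main obstacle is the tower-cancellation step---verifying rigorously over the Novikov ring that the two acyclic $t^\lambda$-twisted subcomplexes split off cleanly and leave exactly this rank-$n$ free module---which amounts to extending the figure-eight computation of \cite{JM} to the full family of twist knots.
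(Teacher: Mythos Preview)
Your approach via the twisted integer-surgery mapping cone is a legitimate alternative and could be made to work, but as written it has a few issues. First, $\tau(D_-(U,n)) = 1$ for $n > 0$, not $0$: already $D_-(U,1)$ is a trefoil, and the complex in Figure~\ref{Knot Complex} that you invoke has $a_3$ at $(A,M)=(1,0)$, giving $\delta = 1 = -\sigma/2$ and hence $\tau = 1$. The complex therefore consists of one length-three staircase plus $n-1$ boxes rather than $n$ boxes, and this matters for the cancellation count. Second, your description of the $n$ survivors ``in $\mathfrak{s}_{\pm 1}$'' is off: by adjunction only the torsion structure $\mathfrak{s}_0$ contributes for a genus-one knot; you presumably mean the $s = \pm 1$ summands of the mapping cone for $\mathfrak{s}_0$, but since $\widehat{HFK}(K,\pm 1)$ each have rank $n$ you still owe an explanation of why only $n$ (not $2n$) classes survive. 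Finally, you yourself flag the tower-cancellation step as the main obstacle, and making that splitting rigorous over $\Lambda$ is indeed the crux of this route.

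The paper avoids the mapping-cone machinery entirely. It first computes $\widehat{HF}(S^3_1(K))$ from the large-surgery formula applied to Figure~\ref{Knot Complex}, then runs the $\mathbb{Z}[t,t^{-1}]$-twisted surgery exact triangle for $(S^3, Y_0, Y_1)$; a grading argument forces $\widehat{f}_3 = 0$, yielding $\underline{\widehat{HF}}(Y_0) \cong L(t)^n \oplus L(t)^n$ in two adjacent gradings. The long exact sequence linking $\underline{\widehat{HF}}$ and $\underline{HF}^+$, together with the known $\underline{HF}^\infty(Y_0,\mathfrak{t}_0)$, then determines $\underline{HF}^+(Y_0;\mathbb{F}_2[t,t^{-1}])$: one $U$-tower of copies of $\mathbb{F}_2$ (with trivial $t$-action) and a single $L(t)^n$. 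Finally the universal coefficient theorem over the PID $\mathbb{F}_2[t,t^{-1}]$ passes to $\Lambda_\omega$; since $\mathbb{F}_2 \otimes_{\mathbb{F}_2[t,t^{-1}]} \Lambda_\omega = 0$ when $t$ acts by $t^\lambda$ with $\lambda \neq 0$, the tower dies for free and only $\Lambda^n$ remains, with the mirror case handled by a short $\mathrm{Ext}$ computation. Thus the paper trades your mapping-cone bookkeeping and tower-splitting for the more elementary triangle plus UCT, and the step you identify as the main obstacle becomes a one-line tensor computation.
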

\begin{proof}
We use the idea in the proof of Lemma 8.6 of \cite{OZSVATHAbsolutelygraded} to compute $\underline{HF}^+(S^3_0(D_-(U,n)))$. We find it easier to compute $\widehat{HF}(Y_1)$ first where $Y_1=S^3_1(D_-(U,n))$.
The surgery formula (see \cite[Theorem 4.4]{OZSVATH200458}) relates $HF^\circ(S^3_p(K))$, $\circ\in\{\wedge,\pm,\infty\}$, to the homology of certain sub-complexes of $CFK^\infty(K)$. 
In fact,  $\widehat{HF}(S^3_p(K),[s])\cong H_*(C\{\max(i,j-s)=0\})$ and $HF^+(S^3_p(K),[s])\cong H_*(C\{\max(i,j-s)\geq0\})$ when $p\geq 2g(K)-1$ and $s\leq p/2$. Here $[s]\in\mathbb{Z}_p$ is the corresponding $\text{Spin}^c$ structure on $S^3_p(K)$. Therefore, $\widehat{HF}(Y_1,0)\cong\mathbb{Z}^{n-1}_{(-1)}\oplus\mathbb{Z}^n_{(-2)}$ generated by $w_i$, $y_i$, and $a_3$ where $M(w_i)=-1$ and $M(y_i)=M(a_3)=-2$, $i=1,\dots,n-1$ (see Figure \ref{Knot Complex}).

Let $t$ be a generator for $H^1(Y_0;\mathbb{Z})$, then we can think of $\mathbb{Z}[H^1(Y_0; \mathbb{Z})]$ as $\mathbb{Z}[t, t^{-1} ]$ or $L(t)$. For any $\mathbb{Z}[U]$-module $M$, let $M[t, t^{-1}]$ denote the induced module over $\mathbb{Z}[U, t,t^{-1}]$. There is a $\mathbb{Z}[U,t,t^{-1}]$-equivariant long exact sequence (see \cite[ Theorem 9.21]{OS2})
\begin{equation}\label{long-exact-1}
\dots\rightarrow\widehat{HF}(S^3)[t,t^{-1}]\xrightarrow{\widehat{f}_1} \underline{\widehat{HF}}(Y_0)\xrightarrow{\widehat{f}_2} \widehat{HF}(Y_1)[t,t^{-1}]\xrightarrow{\widehat{f}_3}\dots
\end{equation}
where the component of $\widehat{f}_1$ mapping into $\widehat{HF}(Y_0,\mathfrak{t}_0)$ (now thought of as absolutely $\mathbb{Q}$-graded)
has degree $-1/2$, the restriction of $\widehat{f}_2$ to $\widehat{HF}(Y_0, \mathfrak{t}_0)$ has degree $-1/2$, while $\widehat{f}_3$ is non-increasing in grading (see Subsection \ref{Gradings}). Therefore $\widehat{f}_3$ vanishes identically and $\underline{\widehat{HF}}(Y_0)\cong L(t)^n\oplus L(t)^n$, generated in dimensions $-\frac{1}{2}$ and $-\frac{3}{2}$. Indeed, by the adjunction inequality (see \cite[Theorem 7.1]{OS2}), $\underline{\widehat{HF}}(Y_0,\mathfrak{t})=0$ for $\mathfrak{t}\neq\mathfrak{t}_0$. Theorem 7.1 of \cite{OS2} is stated for $HF^+$, but the argument, as explained in \cite{OS2}, applies to the case of $\underline{\widehat{HF}}$. 

In general, if $\underline{\widehat{HF}}_k(Y)=0$, $k\geq m$,  the long exact sequence
\begin{equation}\label{LES}
	\cdots\xrightarrow{\delta}\underline{\widehat{HF}}_{i+1}\xrightarrow{i_*}\underline{HF}^+_{i+1}\xrightarrow{U}\underline{HF}^+_{i-1}\xrightarrow{\delta}\underline{\widehat{HF}}_{i}\cdots,
\end{equation}
implies that $U:\underline{HF}_{i+1}^+(Y)\rightarrow \underline{HF}_{i-1}^+(Y)$, $i\geq m$, is an isomorphism. In this sequence $\delta$ is the connecting map.
Therefore, $\underline{HF}_{i}^\infty(Y)\cong\underline{HF}^+_{i}(Y)$, $i\geq m-1$. Indeed, if $x\in \underline{HF}_i^+(Y)$, $i\geq m-1$, for each $n$, there is a $y_n\in\underline{HF}^+(Y)$ such that $x=U^ny_n$. Therefore, $\delta(x)=U^n\delta(y_n)=0$ since $\delta(y_n)=0$ for large $n$ where $\delta$ is the connecting map in the long exact sequence. This induces the following short exact sequence
\[
0\rightarrow \underline{HF}_i^-(Y)\rightarrow\underline{HF}_i^\infty(Y)\rightarrow\underline{HF}_i^+(Y)\rightarrow0,
\]
$i\geq m-1$. For large $i$, since $\underline{HF}_i^-(Y)=0$, we have $\underline{HF}_i^\infty(Y)\cong\underline{HF}_i^+(Y)$. From here and the isomorphism $U:\underline{HF}_{i+1}^+(Y)\rightarrow \underline{HF}_{i-1}^+(Y)$, $i\geq m$, we have $\underline{HF}_{i}^\infty(Y)\cong\underline{HF}^+_{i}(Y)$, $i\geq m-1$. The computation of $\underline{HF}^\infty(Y)$ (see Theorem 10.12 of \cite{OS2}) shows that there is a $\mathbb{Z}[U, U^{-1}]\otimes\mathbb{Z}[t,t^{-1}]$-module isomorphism $\underline{HF}^+_{i}(Y_0,\mathfrak{t}_0)\cong\mathbb{Z}[U,U^{-1}]$ where here the latter group is endowed with a trivial action by $\mathbb{Z}[t,t^{-1}]$. To determine the homological grading, note that  the cobordism  from $S^3$ to $Y_0$, obtained by adding a 2-handle, induces a map $f^\infty_1:\underline{HF}^\infty(S^3)[t,t^{-1}]\rightarrow \underline{HF}^\infty(Y_0,\mathfrak{t}_0)$ which shifts the grading by -1/2 (see Subsection \ref{Gradings}). Therefore, $\underline{HF}^+_{i}(Y_0)\cong\mathbb{Z}$, $i\equiv-1/2$ and $i\geq-1/2$. Using the long exact sequence that relates $\underline{\widehat{HF}}(Y_0)$ and $\underline{HF}^+(Y_0)$, we have
\[\begin{matrix}
	\underline{HF}^+_{-i}\cong\underline{HF}^+_{-\frac{5}{2}},&\text{if}\ i\equiv-5/2\mod 2\  \text{and}\ i\geq-5/2,\\
	\underline{HF}^+_{-i}\cong\underline{HF}^+_{-\frac{7}{2}},&\text{if}\ i\equiv-7/2\mod 2\  \text{and}\ i\geq-7/2.
\end{matrix}
\] 
Since $\underline{HF}^+_{-i}=0$, for large $i$, we have 
\[\begin{matrix}
	\underline{HF}^+_{-i}=0,&\text{if}\ i\equiv-5/2\mod 2\  \text{and}\ i\geq-5/2,\\
	\underline{HF}^+_{-i}=0,&\text{if}\ i\equiv-7/2\mod 2\  \text{and}\ i\geq-7/2.
\end{matrix}
\]
From here and the long exact sequence in \eqref{LES}, we have
\[
0\rightarrow\underline{\widehat{HF}}_{-3/2}(Y_0)\cong L(t)^n\rightarrow\underline{HF}^+_{-3/2}(Y_0)\rightarrow0.
\]
This completes the computation
\[
\underline{HF}^+(Y_0)_i\cong\begin{cases}
	\mathbb{Z},& \text{if}\  i\equiv-1/2\mod 2\  \text{and}\ i\geq-1/2,\\
	L(t)^n&\text{if}\ i=-3/2,\\
	0& \text{otherwise.}
\end{cases}
\]

To compute $\underline{HF}^+(Y_0;\Lambda_\omega)$, we follow the computation in \cite[Proof of Theorem 1.3]{A-P2010}. First note that the above computation is true with coefficients in $\mathbb{F}_2$
\[
\underline{HF}^+(Y_0;\mathbb{F}_2[t,t^{-1}])_i\cong\begin{cases}
	\mathbb{F}_2,& \text{if}\  i\equiv-1/2\mod2\  \text{and}\ i\geq-1/2,\\
	\mathbb{F}_2[t,t^{-1}]^n&\text{if}\ i=-3/2,\\
	0& \text{otherwise.}
\end{cases}
\]
Since
\[
\underline{CF}^+(Y_0;\Lambda_\omega)\cong\underline{CF}^+(Y_0;\mathbb{Z}[t,t^{-1}])\otimes_{\mathbb{Z}[t,t^{-1}]}\Lambda_\omega
\]
and $\mathbb{F}_2[t,t^{-1}]$ is a PID, by the universal coefficient theorem, there is the following exact sequence
\[
0\rightarrow\underline{HF}^+(Y_0;\mathbb{F}_2[t,t^{-1}])\otimes_{\mathbb{F}_2[t,t^{-1}]}\Lambda_\omega\rightarrow\underline{HF}^+(Y_0;\Lambda_\omega)\rightarrow\text{Tor}_1^{\mathbb{F}_2[t,t^{-1}]}(\underline{HF}^+(Y_0;\mathbb{F}_2[t,t^{-1}]),\Lambda)\rightarrow0.
\]
As shown in \cite{A-P2010}, $\text{Tor}_1^{\mathbb{F}_2[t,t^{-1}]}(\underline{HF}^+(Y_0;\mathbb{F}_2[t,t^{-1}]),\Lambda)=0$ and the result follows for $S^3_0(D_-(U,n))$. Note that $D_+(U,-n)$ is the mirror image of $D_-(U,n)$. Therefore, $S^3_0(D_+(U,-n))$ is homeomorphic to $-S^3_0(D_-(U,n))=-Y_0$. As discussed in \cite[Subsetion 12.2]{Z21}, the complex $\underline{CF}^-(-Y_0,\mathfrak{s};\Lambda_\omega)$ is chain homotopic to $\textnormal{Hom}_{\Lambda[U]}(\underline{CF}^-(Y_0,\mathfrak{s};\Lambda_\omega);\Lambda[U])$. 
By the Universal Coefficient Theorem, there is a split short exact sequence
\[
0\rightarrow\textnormal{Ext}^1_{\Lambda[U]}(\underline{HF}^-(Y_0;\Lambda_\omega),\Lambda[U])\rightarrow \underline{HF}^-(-Y_0;\Lambda_\omega)\rightarrow\textnormal{Hom}_{\Lambda[U]}(\underline{HF}^-(Y_0;\Lambda_\omega),\Lambda[U])\rightarrow0.
\]
We have
\[
\textnormal{Hom}_{\Lambda[U]}(\underline{HF}^-(Y_0;\Lambda_\omega),\Lambda[U])\cong\textnormal{Hom}_{\Lambda[U]}(\Lambda^n,\Lambda[U])=0.
\]
If we use the free resolution 
\[
0\rightarrow\Lambda[U]^n\xrightarrow{ U}\Lambda[U]^n\rightarrow0
\] 
and apply $\textnormal{Hom}_{\Lambda[U]}(-,\Lambda[U])$, 
\[
0\leftarrow\Lambda[U]^n\xleftarrow{ U}\Lambda[U]^n\leftarrow0,
\]
then $\textnormal{Ext}^1_{\Lambda[U]}(\underline{HF}^-(Y_0;\Lambda_\omega),\Lambda[U])$ is the cohomology at position $1$, which is isomorphic to $\Lambda^n$.
This completes the proof.

\end{proof}

\begin{figure}[!h]
	\def\svgwidth{16cm}
	\begin{center}
		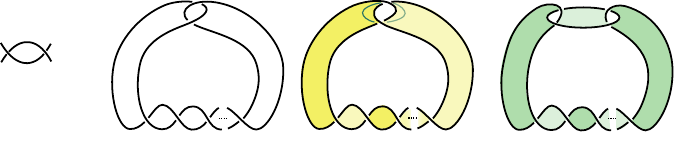
		\caption{Left: the three manifold $Y_0$ obtained by performing 0-surgery on $D_-(U,n)$. $\lambda$ is a non-separating curve on the non-separating torus in $Y_0$. Right: the three manifold obtained by performing 0-surgery on the $n$-twisted Whitehead link $W_n$.}
		\label{Whitehead double}
	\end{center}
\end{figure}

Figure \ref{Whitehead double}, in the middle, shows a non-separating torus $T\subset Y_0$ which is obtained by capping off the genus one Seifert surface for $D_-(U,n)$, shown in yellow. The zero-framing on lambda differs from the framing induced by the Seifert surface by 1. So, performing 0-surgery on lambda has the effect of composing the identification with a Dehn twist on lambda.  
This results in a manifold $S^3_0(W_n)$ obtained by performing 0-surgery on the $n$-twisted Whitehead link $W_n$, $n>0$, shown in Figure \ref{Whitehead double} on the right. Note that the Kirby diagram for $S^3_0(W_n)$, is obtained by a Kirby move from the Kirby diagram shown in the middle of Figure \ref{Whitehead double} where $\lambda$ has framing 0.
\begin{cor}\label{Whieheade-Link}
	Let $S^3_0(W_n)$ be a closed orientable three manifold obtained by performing 0-surgery on the $n$-twisted Whitehead link $W_n$, $n\neq0$. Suppose that $T$ is the non-separating torus in $S^3_0(W_n)$ obtained by capping off the genus 1 Seifert surface of $W_n$, which is shown in green on the right of Figure \ref{Borromean ring}. Let $[\omega]$ be a 2-dimensional cohomology class in $ H^2(S^3_0(W_n);\mathbb{R})$ such that $\omega([T])\neq0$. Then
	\[
	\underline{HF}(S^3_0(W_n);\Lambda_\omega)\cong\Lambda^{|n|}.
	\] 
\end{cor}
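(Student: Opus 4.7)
The plan is to realize $S^3_0(W_n)$ as the excision partner of a disjoint union to which Lemma \ref{Lem-Ex} and Theorem \ref{Thm-2} directly apply. For $n>0$ let $A := S^3_0(D_-(U,n))$, and for $n<0$ let $A := S^3_0(D_+(U,-n))$; in either case $A$ contains the non-separating torus $T$ produced by capping off the genus-one Seifert surface. As recalled just before the statement, $S^3_0(W_n)$ is obtained from $A$ by cutting along $T$ and regluing by a single Dehn twist $\tau_\lambda$ along $\lambda \subset T$.

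I would take
$$Y_1 := A \,\sqcup\, \bigl(T^2 \times S^1\bigr),$$
with excision surfaces $\Sigma_1 := T \subset A$ and $\Sigma_2 := T^2 \times \{\mathrm{pt}\} \subset T^2 \times S^1$. Using two orientation-preserving diffeomorphisms $h,h':\Sigma_1 \to \Sigma_2$, as permitted by Remark \ref{Independence-diff}, with $(h')^{-1}\!\circ h = \tau_\lambda$, the excised manifold $Y_2$ is connected: the $T^2 \times [0,1]$ piece of $Y'$ supplies a product collar whose two ends are attached by $h$ and $h'$ respectively, and the composition through this collar reglues the two copies of $T$ in $A^{\mathrm{cut}}$ by exactly $\tau_\lambda$. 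Hence $Y_2 \cong S^3_0(W_n)$.

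Given $[\omega] \in H^2(S^3_0(W_n);\mathbb{R})$ with $\omega([T])\neq 0$, I would build $[\omega_1]\in H^2(Y_1;\mathbb{R})$ via Mayer--Vietoris: set $\omega_1|_A$ to agree with $\omega$ on the $Y'\cap A$ piece together with a multiple of $\mathrm{PD}[T]$ matching $\omega|_T$, and set $\omega_1|_{T^2\times S^1}$ to be the pullback under the projection $T^2\times S^1 \to T^2$ of a class on $T^2$ scaled so that it restricts to $\omega|_T$ on $\Sigma_2$. Then both hypotheses of Theorem \ref{MainThm}, namely $\iota_{F,1}^*\omega_1 = \iota_{F,2}^*\omega \neq 0$ and $\iota_{Y',1}^*\omega_1 = \iota_{Y',2}^*\omega$, are satisfied, and moreover $\omega_1$ is non-trivial on the torus fiber of $T^2\times S^1$. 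Theorem \ref{MainThm} then gives
$$\underline{HF}\bigl(Y_1;\Lambda_{\omega_1}\bigr) \,\cong\, \underline{HF}\bigl(S^3_0(W_n);\Lambda_\omega\bigr).$$
Computing the left-hand side via the tensor-product description of twisted Floer homology on disjoint unions,
$$\underline{HF}\bigl(Y_1;\Lambda_{\omega_1}\bigr) \,\cong\, \underline{HF}\bigl(A;\Lambda_{\omega_1|_A}\bigr) \otimes_{\Lambda} \underline{HF}\bigl(T^2\times S^1;\Lambda_{\omega_1|_{T^2\times S^1}}\bigr),$$
Lemma \ref{Lem-Ex} identifies the first factor with $\Lambda^{|n|}$ since $\omega_1|_A$ pairs non-trivially with the capped-off Seifert class, while Theorem \ref{Thm-2} identifies the second factor with $\Lambda$ since $\omega_1$ is non-zero on the torus fiber. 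Multiplying gives $\Lambda^{|n|}$, as required.

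The main obstacle I expect is verifying that the two-diffeomorphism excision indeed produces $S^3_0(W_n)$ rather than some nearby manifold. The delicate point is the orientation bookkeeping when composing $h$ and $h'$ through the $T^2\times [0,1]$ collar, confirming that the discrepancy $(h')^{-1}\!\circ h$ lands as the specific Dehn twist $\tau_\lambda$ (and not its inverse, or a power) dictated by the framing computation in the paragraph preceding the corollary. Once this identification is confirmed and the Mayer--Vietoris construction of $\omega_1$ is in place, the remaining steps are direct invocations of previously established results.
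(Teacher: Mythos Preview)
Your proposal is correct and takes essentially the same approach as the paper: the paper's two-line proof ``Use Lemma~\ref{Lem-Ex} and Theorem~\ref{MainThm}'' relies implicitly on Remark~\ref{Independence-diff} and the paragraph preceding the corollary, which together amount exactly to your excision setup with $Y_1 = A \sqcup (T^2\times S^1)$ and two gluing diffeomorphisms differing by $\tau_\lambda$. Your concern about the orientation bookkeeping in the regluing is precisely the topological identification the paper records just before the corollary (the $0$-framing on $\lambda$ differs from the Seifert framing by $1$), so this is already handled.
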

\begin{proof}
	Use Lemma \ref{Lem-Ex} and Theorem \ref{MainThm}.
\end{proof}

\begin{proof}[Proof of Corollary \ref{Topology-result}]
	Assume that $S^3_0(W_n)$ and $S^3_0(W_m)$ are related by the excision construction, and $[\omega_i]\in H^2(S^3_0(W_i);\mathbb{R})$, $i=n,m$, are 2-dimensional cohomology classes that satisfy the conditions of Theorem \ref{MainThm}. Furthermore, we can assume that $\omega_i([T_i])\neq0$ where $T_i$ is the non-separating torus in $S^3_0(W_i)$ obtained by capping off the genus 1 Seifert surface of $W_i$. Now Corollary \ref{Whieheade-Link} contradicts the result of Theorem \ref{MainThm}.  
\end{proof}

\begin{remark}
	That $\underline{HF}(S^3_0(W_n);\Lambda_\omega)\cong\Lambda^{|n|}$ can also be obtained using the techniques used in the proof of Theorem \ref{Thm-2}. See Corollary 2.3 of \cite{Ai-YiNi}.
\end{remark}

\begin{figure}[!h]
	\def\svgwidth{0.9\textwidth}
	\begin{center}
		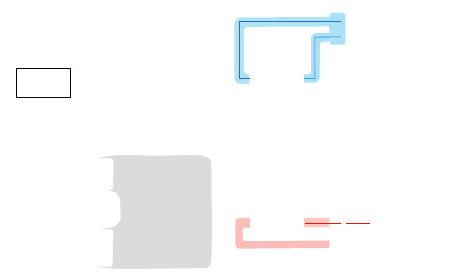
		\caption{Top left: $(m,n)$-twisted Borromean rings $B(m,n)$. Here, the boxes labeled $m$ and $n$ denote $m$ and $n$ full twists, respectively. Bottom left: a cylinder $C$ which bounds $U$ and $U'$. Top right and bottom right: two non-separating tori $\Sigma_1$ and $\Sigma_2$ in $S^3_0(B(m,n))$. Top right: the curve $\eta$ intersects $\Sigma_i$, $i=1,2$, geometrically once and is a meridian for the red curve.}
		\label{Borromean ring}
	\end{center}
\end{figure}

Let $B(m,n)$ denote \emph{$(m,n)$-twisted Borromean rings} as shown in Figure \ref{Borromean ring} on the top left.  $B(m,n)$ is a three component link such that each component is an unknot.  In Figure \ref{Borromean ring}, the boxes labeled $m$ and $n$ denote $m$ and $n$ full twists, respectively. Therefore, the blue and red components  are twisted $m+n$ times.  Let $U$ denote the component shown in black in Figure \ref{Borromean ring} on the top left. Note that $B(1,-1)$ represents the Borromean rings. Let $S^3_0(B(m,n))$ be a manifold obtained by performing 0-surgery on the components of $B(m,n)$.  There are two non-separating tori $\Sigma_1$ and $\Sigma_2$ in $S^3_0(B(m,n))$ shown in Figure \ref{Borromean ring} on the top right and on the bottom right. 
Indeed, the unknot $U$ with framing 0 specifies a 2-sphere $S$. The blue component intersects $S$ geometrically twice with opposite orientations. If we remove a neighborhood of two intersection points in $S$ and connect the two circle boundaries with a tubular neighborhood of the blue arc that connects them, we obtain $\Sigma_1$. 
Let $U'$ be the unknot as shown in Figure \ref{Borromean ring} on the bottom left. There is cylinder $C$ in $S^3_0(B(m,n))$ such that $\partial C=U\cup U'$ (see Figure \ref{Borromean ring} on the bottom left). Therefore, $U'$ specifies a 2-sphere which can be used to construct the surface $\Sigma_2$. Let $\eta$ be the curve shown in Figure \ref{Borromean ring} on the top left. The curve $\eta$ intersects $\Sigma_i$, $i=1,2$, geometrically once and is the meridian for the red component. 
\begin{lemma}\label{excision-on-borromean}
	$S^3_0(W_n)\amalg S^3_0(W_m)$  is obtained from $S^3_{0}(B(m,n))$ by excision construction along the surfaces $\Sigma_1\cup\Sigma_2$ shown in Figure \ref{Borromean ring}.
\end{lemma}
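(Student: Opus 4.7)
The plan is to argue by Kirby calculus, exhibiting an explicit orientation-preserving diffeomorphism $h:\Sigma_1\to\Sigma_2$ for which the excision of $S^3_0(B(m,n))$ along $\Sigma_1\cup\Sigma_2$ produces $S^3_0(W_n)\amalg S^3_0(W_m)$.

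First I would record carefully the geometric description of each torus. The torus $\Sigma_1$ is obtained from the $2$-sphere $S$ bounded by the $0$-framed unknot $U$ (which descends to a non-separating $2$-sphere in the partial surgery on $U$) by removing two disks around the pair of transverse intersection points $S\cap(\text{blue})$ and replacing them by the annulus running through the blue surgery solid torus. The torus $\Sigma_2$ is described in the same way using the sphere bounded by $U'$ (related to $U$ via the cylinder $C$ of Figure \ref{Borromean ring}) together with the red component. The meridian $\eta$ of the red component meets each $\Sigma_i$ transversely in a single point with matching signs, so with appropriate orientations $[\Sigma_1]=[\Sigma_2]$ in $H_2(S^3_0(B(m,n));\mathbb{Z})$. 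Consequently $\Sigma_1\cup\Sigma_2$ is separating, and $Y':=S^3_0(B(m,n))\setminus\nu(\Sigma_1\cup\Sigma_2)$ splits into two connected components $Y'_a$ and $Y'_b$, each carrying exactly one boundary copy of $\Sigma_1$ and one of $\Sigma_2$.

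Next I would identify $Y'_a$ and $Y'_b$ topologically. In $S^3$, the spheres $S$ and $S'$ separate $S^3$ into three regions; the Borromean-type linking of $B(m,n)$ places the $n$-twist box on one side of $S\cup S'$ and the $m$-twist box on the other. After accounting for the $0$-surgeries on $U$, $U'$, the blue, and the red components (which cap off or partition the corresponding surgery solid tori), one side of the cut carries exactly the data of a twisted Whitehead link with $n$ twists minus an open regular neighborhood of a genus-one Heegaard-type torus, and the other side carries the analogous picture with $m$ twists. Thus $Y'_a$ is diffeomorphic to the complement of a genus-one torus in $S^3_0(W_n)$, and $Y'_b$ is diffeomorphic to the complement of such a torus in $S^3_0(W_m)$.

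Finally I would re-glue $\Sigma_1$ to $-\Sigma_2$ inside $Y'_a$ (and likewise inside $Y'_b$) using the product identification inherited from isotoping $S$ to $S'$ through the cylinder $C$. This closes each piece up by restoring the removed torus, and a short Kirby calculation then identifies the resulting closed manifolds as $S^3_0(W_n)$ and $S^3_0(W_m)$. By Remark \ref{Independence-diff}, any other orientation-preserving gluing diffeomorphism yields a manifold whose twisted Heegaard Floer homology agrees, but for this lemma it is the topological identification of $Y_2$ (not merely of its Floer invariants) that matters, and the product diffeomorphism above realizes it on the nose. The main obstacle is the careful bookkeeping in the cut along the \emph{tubed} spheres $\Sigma_1$ and $\Sigma_2$: since each torus incorporates half of the blue (respectively red) surgery solid torus, one has to track how these half-solid-tori recombine with the arcs of $U$, $U'$, and the remaining Borromean strands on each side to recognize the two pieces as Whitehead link $0$-surgery complements, rather than some twisted variant thereof.
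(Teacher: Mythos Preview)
Your plan is a reasonable direct attack---cut $S^3_0(B(m,n))$ along $\Sigma_1\cup\Sigma_2$, identify the two pieces, and reglue---but the step you label ``identify $Y'_a$ and $Y'_b$ topologically'' is exactly the content of the lemma, and you only assert it. The sentence ``one side of the cut carries exactly the data of a twisted Whitehead link with $n$ twists minus an open regular neighborhood of a genus-one \dots torus'' is what needs to be proved, and ``a short Kirby calculation'' is not a substitute for actually doing it, especially since the tori $\Sigma_1,\Sigma_2$ are tubed through the surgery solid tori in a way that makes the bookkeeping delicate (as you yourself flag at the end).

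The paper takes a genuinely different and cleaner route that avoids this direct identification. It introduces two auxiliary lemmas (Lemma~\ref{Splice-lem} and Lemma~\ref{Self-Splice-Lem}) which describe, in surgery language, what happens when you glue two knot complements along their boundary tori by a meridian-to-meridian, longitude-to-minus-longitude map: one gets $(Y_0\#Y_1)_{\lambda_0+\lambda_1}(K_0\#K_1)$ in the two-manifold case, and $(Y\#(S^1\times S^2))_{\lambda_0+\lambda_1}(K_0\natural K_1)$ in the single-manifold case. The proof then starts from a common object---the pair of $(2,2m)$ and $(2,2n)$ torus links $T_m\amalg T_n$ in $S^3\amalg S^3$---and shows that applying these splice operations in one order yields $S^3_0(W_m)\amalg S^3_0(W_n)$ with the tori $\Sigma_1,\Sigma_2$, while applying them in the other order yields $S^3_0(B(m,n))$ with the same pair of tori. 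Since both constructions differ only by which boundary tori of $S^3\setminus\nu(T_m)\amalg S^3\setminus\nu(T_n)$ are glued to which, this is precisely the excision move. The advantage is that the tori $\Sigma_i$ are \emph{built in} as the gluing tori of the splice, so there is no separate step of cutting and recognizing the pieces; your approach would eventually have to rediscover this same torus-link decomposition inside the ``short Kirby calculation.''
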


To prove this lemma, we need two auxiliary lemmas.
Let $K$ be a knot in a 3-manifold $Y$ and $\nu(K)\cong S^1\times D^2$ denote a tubular neighborhood of $K$ in $Y$. A simple closed curve $\lambda\subset\partial \nu(K)$ is called a framing for $K$. In fact, a framing of $K$ determines a new manifold $Y_\lambda(K)$ obtained by gluing a disk to $\lambda$ in $\partial(\nu(K))$.  We say that $\lambda$ is a Morse framing if $[\lambda]\in H_1(\nu(K))$ is a generator.
\begin{figure}[!h]
	\def\svgwidth{\textwidth}
	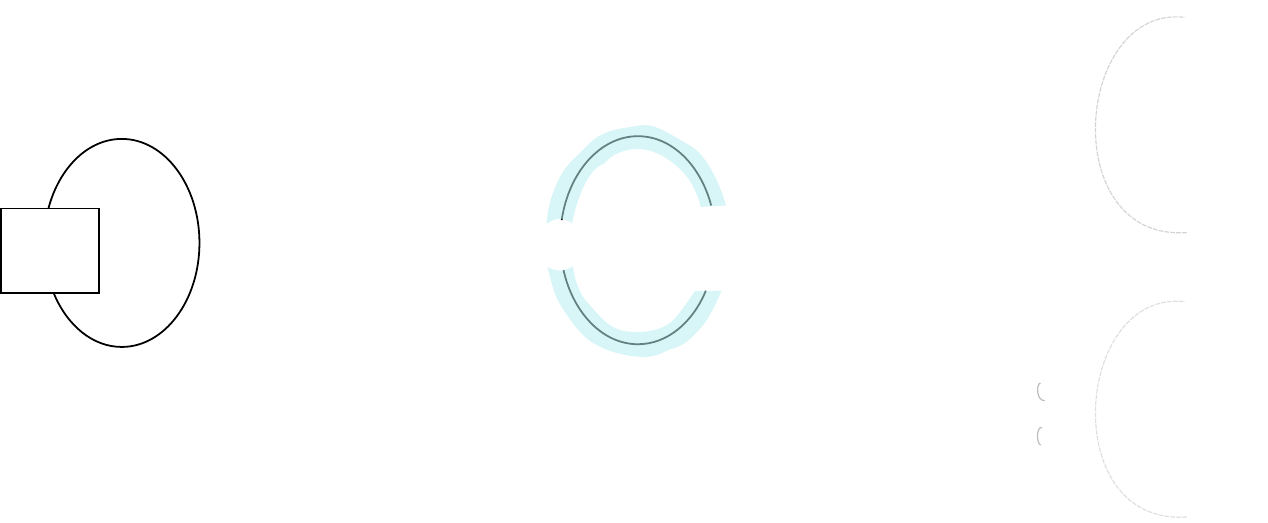
	\caption{Left: a knot $K_i$ in $Y_i$, $i=0,1$. Middle: $\nu(K_i)=B_i\cup\nu(T_i)$. Right-top: $\lambda_0+\lambda_1$-surgery on $K_0\#K_1$ in $Y_0\#Y_1$. Right-bottom: a genus one surface obtained by identifying the surfaces $\mathcal{F}_i=\partial(\nu(T_1)\cup B_i)\subset Y_i$, $i=0,1$, by $\phi$.}
	\label{Splice}
\end{figure}
\begin{lemma}\cite[Lemma 2.7 ]{Hendricks2024ANO}\label{Splice-lem}
	Let $(Y_0,K_0)$ and $(Y_1,K_1)$ be knots with Morse framings $\lambda_0$ and $\lambda_1$ respectively. Let 
	\[
	\phi:\partial(\nu(K_0))\rightarrow\partial(\nu(K_1))
	\] 
	be a gluing map which identifies the meridian $\mu_0$ with $\mu_1$, and which
	maps $\lambda_0$ to $-\lambda_1$. Then
	$(Y_0 \setminus \nu(K_0))\cup_\phi (Y_1 \setminus \nu(K_1))$
	is equal to $(Y_0\#Y_1)_{\lambda_1+\lambda_2}(K_0\#K_1)$. 
\end{lemma}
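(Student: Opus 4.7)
The plan is to exhibit a separating torus $T$ in the surgered manifold $Z := (Y_0\#Y_1)_{\lambda_0+\lambda_1}(K_0\# K_1)$ whose two complementary pieces are the knot exteriors $X_i := Y_i\setminus\nu(K_i)$, and to verify that the gluing across $T$ reproduces $\phi$.

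First, I would set up the geometry. Choose balls $B'_i\subset Y_i$ each meeting $K_i$ in a trivial arc so that $Y_0\# Y_1$ arises by identifying $\partial B'_0\sim\partial B'_1$ and $K := K_0\# K_1$ meets the splitting sphere $\Sigma$ in two points. In $X_K := (Y_0\#Y_1)\setminus\nu(K)$, the restriction $A := \Sigma\cap X_K$ is an annulus whose boundary consists of two parallel meridians $\mu\sqcup\mu'$ of $K$, and $A$ splits $X_K$ into two pieces $X_i\setminus V_i$, where $V_i := B'_i\setminus\nu(K_i)$ is a solid torus (the ball $B'_i$ with the arc removed). Let $V$ be the surgery solid torus, attached so that the boundary of its meridian disk is the curve $\lambda_0+\lambda_1$ on $\partial X_K$. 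Under this gluing, the meridian $\mu$ of $K$ becomes a longitude of $V$, so the parallel longitudes $\mu,\mu'$ cobound a properly embedded annulus $A'\subset V$, which splits $V$ into two solid tori $V'_0, V'_1$. Setting $T := A\cup_\partial A'$ produces an embedded torus in $Z$ separating it into two pieces $Z_i := (X_i\setminus V_i)\cup V'_i$.

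Next, I would argue that $Z_i\cong X_i$ and that the gluing along $T$ matches $\phi$. For the first, one checks that the replacement of the solid torus $V_i\subset X_i$ by the solid torus $V'_i$ (glued along the corresponding annulus on $\partial X_K$) extends to a homeomorphism of the ambient manifold; this uses that the surgery slope $\lambda_0+\lambda_1$ was chosen exactly so that the framings agree on the two sides of the splitting torus. For the second, meridians identify as $\mu_0=\mu_1$ because both are components of $\partial A$, and the longitudes satisfy $\lambda_0\leftrightarrow -\lambda_1$ on $T$ because the meridian disk of $V$, bounded by $\lambda_0+\lambda_1$, is cut by $A'$ into two half-disks whose boundaries inherit opposite orientations on the two sides of $T$; the sign arises from the orientation reversal built into the connected-sum construction along $\Sigma$.

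The main obstacle will be the framing and orientation bookkeeping: one has to verify simultaneously that (a) the $V_i\leftrightarrow V'_i$ substitution is a homeomorphism, and (b) the resulting identification on $T$ sends $\lambda_0\mapsto -\lambda_1$ rather than $+\lambda_1$. Both should follow from carefully orienting the meridian disk of $V$ and tracking how it is bisected by $A'$; once the correct orientation conventions are in place, the required matching emerges and the identification $Z\cong X_0\cup_\phi X_1$ follows directly from the decomposition above.
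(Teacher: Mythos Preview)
Your approach is correct and uses the same underlying geometry as the paper, but you run the argument in the reverse direction. The paper starts from the spliced manifold $(Y_0\setminus\nu(K_0))\cup_\phi(Y_1\setminus\nu(K_1))$ and builds toward the surgery description: it decomposes each $\nu(K_i)$ as $B_i\cup\nu(T_i)$ where $B_i$ is a small ball meeting $K_i$ in an arc $k_i$ and $T_i=K_i\setminus k_i$, then performs the gluing $\phi$ in two stages. Removing the interiors of the $B_i$ and identifying $\partial B_0\sim\partial B_1$ (which matches $\mu_0$ with $\mu_1$) yields $(Y_0\#Y_1,K_0\#K_1)$; the remaining identification of $\lambda_0'$ with $-\lambda_1'$ along $\partial\nu(T_i)$ then amounts to attaching a disk along $\lambda_0+\lambda_1$, i.e.\ the claimed Dehn surgery. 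This avoids your solid-torus swap $V_i\leftrightarrow V_i'$ and the attendant framing verification entirely: since the paper never leaves the category of manifolds-with-boundary until the last step, there is nothing to undo. Your version is perfectly valid but incurs extra bookkeeping (which you correctly flag as the main obstacle); the paper's forward direction sidesteps it.
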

This Lemma is proved in \cite[Lemma 7.1 ]{Gordon1983DehnSA} and \cite[Lemma 6.1]{Shinji}. Here, we present a slightly different proof.
\begin{proof}
	Let $B_i\subset Y_i$, $i=0,1$, denote two three balls such that $B_i\cap K_i$, resp. $B_i\cap\lambda_i$ consists of an arc $k_i$, resp. $k'_i$, $i=0,1$. Let $T_i=K_i-k_i$ and $\lambda_i'=\lambda_i-k'_i$. We can write $\nu(K_i)=B_i\cup\nu(T_i)$ (see Figure \ref{Splice} in the middle). We can form $(Y_0 \setminus \nu(K_0))\cup_\phi (Y_1 \setminus \nu(K_1))$ in two steps. First, remove the interiors of $B_i$, $i=0,1$, and identify the boundary components (this identifies $\mu_0$ with $\mu_1$) then identify $\lambda'_0$ with $-\lambda'_1$. The first step results in the connected sum $(Y_0\#Y_1,K_0\#K_1)$. The second step glues a disk to  $\lambda_0+\lambda_1$. This means performing a $\lambda_0+\lambda_1$-surgery on $K_0\# K_1$ in $Y_0\#Y_1$ (see Figure \ref{Splice} on the right).  
\end{proof}
A genus one surface in $(Y_0\#Y_1)_{\lambda_1+\lambda_2}(K_0\#K_1)$ which is obtained by the identification of  $\mathcal{F}_0=\partial(\nu(K_0))$ with $\mathcal{F}_1=\partial(\nu(K_1))$ via $\phi$ is shown in Figure \ref{Splice} on the right-bottom.

Let $K_0$ and $K_1$ be two knots in a 3-manifold $Y$. We also show the copies of these knots in $Y\#(S^1\times S^2)$ by $K_0$ and $K_1$. Remove a small arc $k_i$, with endpoints $a_i$ and $b_i$, from $K_i$, then identify $a_0$ with $a_1$ and $b_0$ with $b_1$ to form a knot $K$. Let $a=a_0\sim a_1$ and $b=b_0\sim b_1$. We form $K$ such that it intersects $\{p\}\times S^2$ ($p\in S^1$) in  $a$ and $b$ with opposite signs. We denote $K$ with $K_0\natural K_1$ (see Figure \ref{Self-Splice} on the right-top). If $\lambda_i$ is a framing for $K_i$, there is an induced framing on $K_0\natural K_1$ denoted by $\lambda_0+\lambda_1$

\begin{figure}[!h]
	\def\svgwidth{\textwidth}
	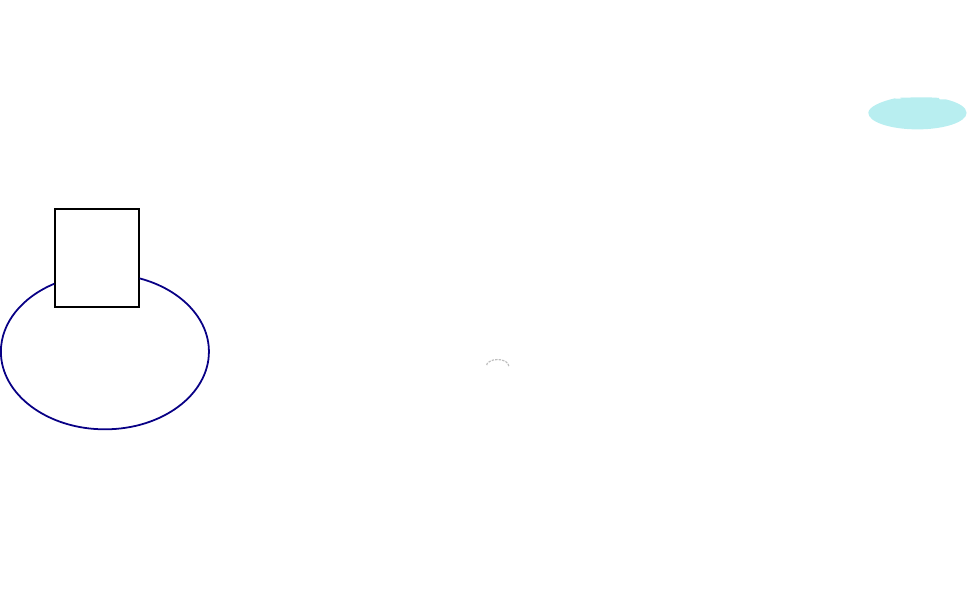
	\caption{Left: two knots $K_i$, $i=0,1$, in $Y$. Middle: $\nu(K_i)=B_i\cup\nu(T_i)$. Right-top: $K_0\natural K_1$ in $Y\#S^1\times S^2$. Right-bottom: a genus one surface  in $(Y\# S^1\times S^2)_{\lambda_0+\lambda_1}(K_0\natural K_1)$ obtained by identifying the surfaces $\mathcal{F}_i=\partial(\nu(T_1)\cup B_i)\subset Y$, $i=0,1$ by $\phi$.}
	\label{Self-Splice}
\end{figure}
\begin{lemma}\label{Self-Splice-Lem}
	Let $K_0,K_1\subset Y$ be knots with Morse framings $\lambda_0$ and $\lambda_1$ respectively. Let 
	\[
	\phi:\partial(\nu(K_0))\rightarrow\partial(\nu(K_1))
	\] 
	be a gluing map which identifies the meridian $\mu_0$ with $\mu_1$, and which
	maps $\lambda_0$ to $-\lambda_1$. Then
	\[
	\frac{Y \setminus (\nu(K_0)\cup \nu(K_1))}{x\sim\phi(x)}
	\]
	is equal to $(Y\#S^1\times S^2)_{\lambda_0+\lambda_1}(K_0\natural K_1)$. 
\end{lemma}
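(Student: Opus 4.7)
\bigskip

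\noindent\textbf{Proof proposal for Lemma \ref{Self-Splice-Lem}.}
The plan is to imitate the two-step decomposition used in the proof of Lemma \ref{Splice-lem}, modifying only the step where the 2-sphere boundaries are identified. As in that proof, write $\nu(K_i)=B_i\cup \nu(T_i)$ where $B_i\subset Y$ is a small 3-ball meeting $K_i$ in an arc $k_i$ and meeting $\lambda_i$ in an arc $k'_i$, and where $T_i=K_i\setminus k_i$ (see Figure \ref{Self-Splice}, middle); choose $B_0$ and $B_1$ to be disjoint, which is possible since the original $\nu(K_0)$ and $\nu(K_1)$ are disjoint. Correspondingly decompose $\partial\nu(K_i)$ as the union of $\partial B_i\setminus \nu(T_i)$ (a pair of disks on the two-sphere $\partial B_i$) and $\partial\nu(T_i)\setminus B_i$ (an annulus), and regard $\phi:\partial\nu(K_0)\to\partial\nu(K_1)$ as being performed in these two pieces.

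I would first perform the identification on the 2-sphere pieces: remove $\mathrm{int}(B_0)\cup\mathrm{int}(B_1)$ from $Y$ and glue the resulting two boundary 2-spheres together by the restriction of $\phi$. In Lemma \ref{Splice-lem} the two balls lay in different components $Y_0, Y_1$, and this move produced the connected sum $Y_0\#Y_1$ containing $K_0\#K_1$. Here, because $B_0$ and $B_1$ lie in the same manifold $Y$, removing two disjoint open 3-balls and identifying the resulting 2-spheres introduces a self-connected-sum, which is precisely $S^1\times S^2$: the complement $S^3\setminus(B_0\sqcup B_1)$ is $S^2\times I$, and identifying its two ends yields $S^2\times S^1$. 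Thus after this step I obtain $Y\#(S^1\times S^2)$, and the arcs $T_0, T_1$ (whose endpoints are now paired according to $\phi|_{\partial B_0}$) close up to form a knot which passes exactly once through the dividing 2-sphere of the $S^1\times S^2$ summand and which agrees with $K_i$ outside a neighborhood of $B_i$; this is, by definition, $K_0\natural K_1\subset Y\#(S^1\times S^2)$.

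In the second step, identify the remaining annular boundaries $\partial\nu(T_i)\setminus B_i$ by the restriction of $\phi$. The union of these two annuli in the quotient is exactly $\partial\nu(K_0\natural K_1)$, and the arcs $\lambda'_i=\lambda_i\setminus k'_i$ glue together to give a simple closed curve on that torus. Because $\phi$ takes $\lambda_0$ to $-\lambda_1$ and preserves the meridian, this curve represents the class $\lambda_0+\lambda_1$ (exactly as in the computation of the induced framing in Lemma \ref{Splice-lem}, to which I can refer directly since the local model around each $\partial\nu(T_i)$ is identical). Identifying these annuli through $\phi$ therefore fills a 2-handle along $\lambda_0+\lambda_1$ on $\partial\nu(K_0\natural K_1)$ (the remaining 3-ball is capped off automatically). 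This is precisely $(\lambda_0+\lambda_1)$-surgery on $K_0\natural K_1$ in $Y\#(S^1\times S^2)$.

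The only substantive step beyond Lemma \ref{Splice-lem} is the self-connected-sum computation in the first step; once one accepts that removing two disjoint 3-balls from a connected 3-manifold and identifying their boundaries yields an $S^1\times S^2$ summand together with the natural arc-joining that produces $K_0\natural K_1$, the rest is a word-for-word copy of the proof of Lemma \ref{Splice-lem}. I would isolate this as the main point of the argument and reference Figure \ref{Self-Splice} for the geometric picture.
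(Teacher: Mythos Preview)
Your proposal is correct and follows essentially the same approach as the paper's proof: both decompose $\nu(K_i)=B_i\cup\nu(T_i)$, first identify the $\partial B_i$'s to obtain $(Y\#(S^1\times S^2),K_0\natural K_1)$, then identify the remaining annuli to effect $(\lambda_0+\lambda_1)$-surgery. Your write-up is slightly more detailed in justifying why the self-gluing of two 2-spheres in a connected manifold yields an $S^1\times S^2$ summand, but otherwise the argument is the same.
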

\begin{proof}
	Let $B_i\subset Y$, $i=0,1$, denote two three balls such that $B_i\cap K_i$, resp. $B_i\cap\lambda_i$ consists of the arc $k_i$, resp. an arc $k'_i$, $i=0,1$. Let $T_i=K_i-k_i$ and $\lambda_i'=\lambda_i-k'_i$. We can write $\nu(K_i)=B_i\cup\nu(T_i)$ (see Figure \ref{Self-Splice} in the middle). We can form
	\[
	\frac{Y \setminus (\nu(K_0)\cup \nu(K_1))}{x\sim\phi(x)}
	\]  
	in two steps. First, remove the interiors of $B_i$, $i=0,1$, and identify the boundary components (this identifies $\mu_0$ with $\mu_1$) then identify $\lambda'_0$ with $-\lambda'_1$. The first step results in $(Y\# S^1\times S^2,K_0\natural K_1)$. The second step glues a disk to  $\lambda_0+\lambda_1$. This means performing a $\lambda_0+\lambda_1$-surgery on $K_0\natural K_1$ in $Y\#S^1\times S^2$ (see Figure \ref{Self-Splice} on the right).  
\end{proof}
A genus one surface in $(Y\#S^1\times S^2)_{\lambda_0+\lambda_1}(K_0\natural K_1)$ which is obtained by the identification of  $\mathcal{F}_0=\partial(\nu(K_0))$ with $\mathcal{F}_1=\partial(\nu(K_1))$ via $\phi$ is shown in Figure \ref{Self-Splice} on the right-bottom.

\begin{figure}[!h]
	\def\svgwidth{\textwidth}
	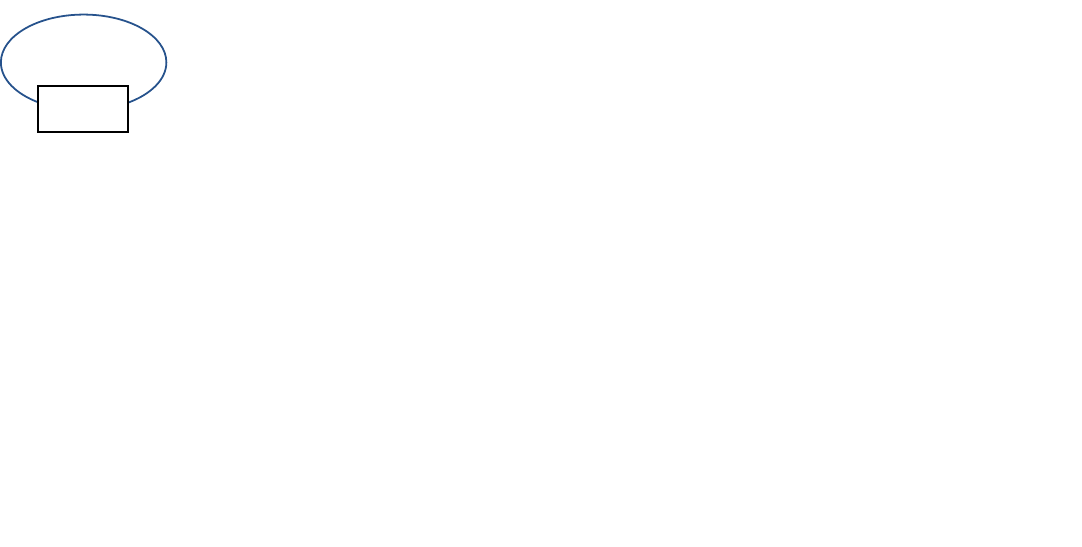
	\caption{Top left: torus links $T_m$ and $T_n$ in two copies of $S^3$. Here, the boxes labeled $m$ and $n$ denote $m$ and $n$ full twists, respectively. Top right: $S^3_0(W_m)\amalg S^3_0(W_n)$ obtained by applying Lemma \ref{Self-Splice-Lem} on the torus links $T_m$ and $T_n$. Bottom left: the manifold obtained by  applying Lemma \ref{Splice-lem} for $(S^3,U_2)$ and $(S^3,U'_2)$. Bottom right: $S^3_0(B(m,n))$ obtained by applying Lemma \ref{Self-Splice-Lem} to the manifold shown on the bottom left. }
	\label{S-S-S}
\end{figure}
\begin{proof}[Proof of Lemma \ref{excision-on-borromean}]
Let $T_n$ and $T_m$ denote two torus links in two copies of $S^3$ with components $U_i$ and $U'_i$, $i=1,2$, respectively (see Figure \ref{S-S-S} on the top left). Let $\mu_i$ and $\lambda_i$ denote the meridian and Seifert longitude of $U_i$. Similarly, $\mu'_i$ and $\lambda'_i$ denote the meridian and Seifert longitude of $U'_i$.  Let 
\[
\psi:\partial(\nu(U_1))\rightarrow\partial(\nu(U_2))
\]
be a gluing map which identifies the meridian $\mu_1$ with $\mu_2$ and which maps $\lambda_1$ to $-\lambda_2$. Similarly,  
\[
\psi':\partial(\nu(U'_1))\rightarrow\partial(\nu(U'_2))
\]
is a gluing map which identifies the meridian $\mu'_1$ with $\mu'_2$ and which maps $\lambda'_1$ to $-\lambda'_2$.
If we apply Lemma \ref{Self-Splice-Lem} for the components of the torus link $T_n$ (resp. $T_m$) with the gluing map $\psi$ (resp. $\psi'$) above, we obtain $S^3_0(W_n)$ with the surface $\Sigma_1$ (resp. $S^3_0(W_m)$ with the surface $\Sigma_2$) (see Figure \ref{S-S-S} on the top right). If we apply Lemma \ref{Splice-lem} for $(S^3,U_2)$ and $(S^3,U'_2)$, we obtain the manifold shown in Figure \ref{S-S-S} on the bottom left with the surface $\Sigma_2$. Let $U_1$ and $U'_1$ also denote the knots induced by $U_1$ and $U'_1$ in the resulting manifold. Finally, if we apply Lemma \ref{Self-Splice-Lem} for $U_1$ and $U'_1$ in the manifold obtained in the last step, we will obtain $S^3_0(B(m,n))$ with the surfaces $\Sigma_1$ and $\Sigma_2$ (see Figure \ref{S-S-S} on the bottom right). This completes the proof.
\end{proof}

\begin{cor}\label{Borromean}
	Let $[\omega]\in H^2 (S^3_0(B(m,n));\mathbb{R})$ be $\lambda \textnormal{PD}[\eta]$, where $\mu$ is the curve in Figure \ref{Borromean ring} on the top right and $\lambda\in\mathbb{R}\setminus\{0\}$.
	Then
	\[
	\underline{HF}(S^3_0(B(m,n));\Lambda_\omega)\cong \Lambda^{|mn|}.
	\]
\end{cor}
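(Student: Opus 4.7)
The plan is to combine Lemma \ref{excision-on-borromean} with the excision theorem (Theorem \ref{MainThm}) and the Whitehead-link computation (Corollary \ref{Whieheade-Link}). Write $Y_1 = S^3_0(B(m,n))$ and $Y_2 = S^3_0(W_m)\amalg S^3_0(W_n)$. By Lemma \ref{excision-on-borromean}, $Y_2$ is obtained from $Y_1$ by the excision construction along $F=\Sigma_1\cup\Sigma_2$, and this is precisely the setup to which Theorem \ref{MainThm} applies.

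First I would verify the hypotheses on the cohomology class. Since $\eta$ is disjoint from $\Sigma_1\cup\Sigma_2$ outside a transverse geometric intersection point with each $\Sigma_i$, the pairing $\langle\omega,[\Sigma_i]\rangle = \lambda\cdot(\eta\cdot\Sigma_i) = \pm\lambda\neq 0$. Hence $\iota_{F,1}^*(\omega)\neq 0$. I would then choose $\omega_2\in H^2(Y_2;\mathbb{R})$ whose restriction to $F$ and to $Y'=Y_1\setminus F$ agrees with that of $\omega_1$; such a class exists because $\omega_1$ vanishes on the Mayer--Vietoris gluing classes used to assemble both $Y_1$ and $Y_2$ from $Y'$ and the appropriate $F\times I$-neighborhood.

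Next I would identify $\omega_2$ explicitly on each connected component. Under the splicing description in the proof of Lemma \ref{excision-on-borromean}, the surface $\Sigma_1$ in $Y_1$ is carried to the non-separating torus $T_m\subset S^3_0(W_m)$ obtained by capping off the Seifert surface of $W_m$, and similarly $\Sigma_2$ corresponds to $T_n\subset S^3_0(W_n)$. Therefore $\omega_2 = \omega_m\oplus\omega_n$ with $\omega_m([T_m]) = \pm\lambda\neq 0$ and $\omega_n([T_n]) = \pm\lambda\neq 0$, which are exactly the hypotheses of Corollary \ref{Whieheade-Link}.

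Finally I would assemble the computation. Theorem \ref{MainThm} yields
\[
\underline{HF}(Y_1;\Lambda_{\omega_1}) \cong \underline{HF}(Y_2;\Lambda_{\omega_2}),
\]
the disjoint-union formula for twisted Heegaard Floer chain complexes gives
\[
\underline{HF}(Y_2;\Lambda_{\omega_2}) \cong \underline{HF}(S^3_0(W_m);\Lambda_{\omega_m}) \otimes_{\Lambda} \underline{HF}(S^3_0(W_n);\Lambda_{\omega_n}),
\]
and Corollary \ref{Whieheade-Link} identifies each factor as $\Lambda^{|m|}$ and $\Lambda^{|n|}$ respectively, producing $\Lambda^{|m|}\otimes_\Lambda\Lambda^{|n|}\cong\Lambda^{|mn|}$. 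The main thing to be careful about is not a deep obstacle but a bookkeeping check: tracking the identification of $\Sigma_1,\Sigma_2$ under the splicing of Lemma \ref{excision-on-borromean} to confirm that $\omega_2$, chosen so its restriction to $F$ matches that of $\omega_1$, automatically satisfies the non-vanishing condition on the capped-off Seifert tori that Corollary \ref{Whieheade-Link} requires.
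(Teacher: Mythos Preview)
Your proposal is correct and follows essentially the same route as the paper: apply Lemma \ref{excision-on-borromean}, invoke Theorem \ref{MainThm}, use the disjoint-union/K\"unneth formula (valid because $\Lambda$ is a field), and plug in Corollary \ref{Whieheade-Link} on each factor. The paper is slightly more explicit that the tensor-product identity on homology follows from the chain-level disjoint-union definition together with $\Lambda$ being a field, and it closes by invoking $\underline{HF}\cong\underline{HF}^-$ for torsion $\mathrm{Spin}^c$ structures with $[\omega]\neq 0$; your extra care in verifying the cohomological hypotheses of Theorem \ref{MainThm} and tracking $\Sigma_i\leftrightarrow T_i$ is a welcome elaboration of points the paper leaves implicit.
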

\begin{proof}
	Let $\Sigma_i$, $i=1,2$, denote the surface in $S^3_0(B(m,n))$ shown in Figure \ref{Borromean ring} on the top right and on the bottom right. It is clear that $[\Sigma_i]\neq0$, $i=1,2$, and $F=\Sigma_1\cup\Sigma_2$ is separating in $S^3_0(B(m,n))$. By Lemma \ref{excision-on-borromean}, if we perform excision in $S^3_0(B(m,n))$ along $F$, we obtain $S^3_0(W_m)\amalg S^3_0(W_n)$. By Theorem \ref{MainThm}, 
	\begin{equation}\label{1}
	\underline{HF}(S^3_0(B(m,n));\Lambda_\omega)\cong \underline{HF}(S^3_0(W_m)\amalg S^3_0(W_n);\Lambda_{\omega'}),
	\end{equation}
	where $[\omega']\in H^2(S^3_0(W_m)\amalg S^3_0(W_n);\mathbb{R})$ satisfies the conditions of Theorem \ref{MainThm}. Note that
	\[
	\underline{CF}^-(S^3_0(W_m)\amalg S^3_0(W_n);\Lambda_{\omega'})= \underline{CF}^-(S^3_0(W_m);\Lambda_{\omega'_1})\otimes_{\Lambda}\underline{CF}^-(S^3_0(W_n);\Lambda_{\omega'_2}),
	\]
	where $\omega'=\omega'_1\oplus\omega'_2$, $\omega'_1\in H^2(S^3_0(W_m);\mathbb{R})$, $\omega'_2\in H^2(S^3_0(W_n);\mathbb{R})$. Since $\Lambda$ is a field,
	\begin{equation}\label{2}
	\underline{HF}^-(S^3_0(W_m)\amalg S^3_0(W_n);\Lambda_{\omega'})= \underline{HF}^-(S^3_0(W_m);\Lambda_{\omega'_1})\otimes_{\Lambda}\underline{HF}^-(S^3_0(W_n);\Lambda_{\omega'_2}).
    \end{equation}
	The result follows from \eqref{1}, \eqref{2}, Corollary \ref{Whieheade-Link}, and the fact that $\underline{HF}(Y,\mathfrak{s};\Lambda_\omega)\cong \underline{HF}^-(Y,\mathfrak{s};\Lambda_\omega)$, when $c_1(\mathfrak{s})$ is torsion and $[\omega]\neq0$.
\end{proof}

Let $K$ be a framed knot in a 3–manifold $Y$ with framing $\lambda$ and meridian $\mu$.
Given an integer $r$, let $Y_r(K)$ denote the 3–manifold obtained from $Y$ by performing
Dehn surgery along the knot $K$ with framing $\lambda+r\mu$. Let $N(K)$ denote a small
tubular neighborhood of the knot $K$ and $\eta\subset Y-N(K)$ be a closed curve in the knot
complement. Then for any integer $r$, $\eta\subset Y-N(K)\subset Y_r(K)$ is a closed curve in
the surgered manifold $Y_r(K)$. We denote its Poincare dual by $[\omega_r]\in H^2(Y_r;\mathbb{R})$. There is a surgery exact sequence for $\omega$-twisted Floer homology \cite[Section 3]{A-P2010}
\begin{equation}\label{exact-seq}
	\cdots\rightarrow\underline{HF}^+(Y;\Lambda_\omega)\rightarrow\underline{HF}^+(Y_{0};\Lambda_{\omega_0})\rightarrow\underline{HF}^+(Y_{1};\Lambda_{\omega_1})\rightarrow\cdots,
\end{equation}
where $[\omega]=\textnormal{PD}[\eta]$.

\begin{figure}
	\def\svgwidth{0.9\textwidth}
	\begin{center}
	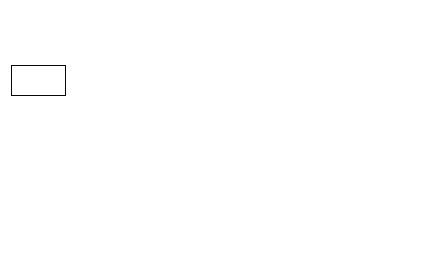
	\caption{Top right: $S^3_0(B(m,n))$. Bottom: $S^3_0(C(m,1,n))$. The boxes labeled $m$ and $n$ denote $m$ and $n$ full twists, respectively.}
	\label{Two-Bridge}
    \end{center}
\end{figure}

\begin{proof}[Proof of Corollary \ref{two-bridge}]
	Let $Y$ be the manifold obtained by performing 0-surgery on the red and blue components of $B(m,n)$ (see Figure \ref{Borromean ring} on the top). Let $K$ denote the knot induced from $U$ in $Y$. Note that
	\[
	Y=\begin{cases}
		\#^2(S^1\times S^2)&\textnormal{if}\ m=-n,\\
		S^3&\textnormal{if}\ |m-n|=1,
	\end{cases}
	\]
	$Y_0=Y_0(K)=S^3_0(B(m,n)$ and $Y_1=Y_1(K)=S^3_0(C(m,\pm1,n))$ (for appropriate choices of the framing). Note that the induced curve $\eta$ in $Y_0$ is such that $\omega_0=\lambda\textnormal{PD}[\eta]$ satisfies the condition in Corollary \ref{Borromean} (see Figure \ref{Two-Bridge} on the top left). When $m=-n$, let $S$ denote a 2-sphere in $Y$ obtained by capping off the Seifert disk of the 0-framed component in red. In this case, the induced 2-dimensional cohomology class $[\omega']$ in $Y$, is such that $\omega'([S])\neq0$ (see Figure \ref{Two-Bridge} on the top right). Therefore, $\underline{HF}(S^1\times S^2;\Lambda_{\omega'})=0$ (see \cite[Subsection 2.2]{A-P2010}). Using Corollary \ref{Borromean} and the long exact sequence in \eqref{exact-seq}, the result follows. When $|m-n|=1$, since the map
	\[
	\Lambda[U^{-1}]\cong \underline{HF}^+(Y;\Lambda_\omega)\rightarrow\underline{HF}^+(Y_0;\Lambda_{\omega_0})\cong\Lambda^{|mn|}
	\]
	in \eqref{exact-seq} is zero, the result follows.
\end{proof}

\bibliography{simplifying-structures}
\Addresses
\end{document}